\newcommand{\prob}{\stackrel{P}{\longrightarrow}}
\newcommand{\one}{{\bf 1}}
\newcommand{\reals}{{\mathbb R}}
\newcommand{\bbr}{\reals}
\newcommand{\cal}{\mathcal}
\newcommand{\vep}{\varepsilon}
\newcommand{\bbg}{\protect{\mathbb G}}
\newcommand{\pp}{{\prime\prime}}
\newtheorem{theorem}{Theorem}[section]
\newtheorem{fact}{Fact}[section]
\newtheorem{lemma}{Lemma}[section]
\newtheorem*{defn}{Definition}
\newtheorem{remark}{Remark}[section]
\def\Cov{{\rm Cov}}
\def\Var{{\rm Var}}
\def\E{{\rm E}}
\def\diag{{\rm Diag}}
\DeclareMathOperator{\Tr}{Tr}
\numberwithin{equation}{section}
\begin{document}

\title[Largest eigenvalue of inhomogeneous ERRG]{Eigenvalues outside the bulk of inhomogeneous Erd\H{o}s-R\'enyi random graphs}
\author[A. Chakrabarty, S. Chakraborty and R.~S.~Hazra]{Arijit Chakrabarty, Sukrit Chakraborty and Rajat Subhra Hazra}
\address{Indian Statistical Institute\\ 203 B.T. Road, Kolkata, 700108\\ India}
\email{arijit.isi@gmail.com, sukrit049@gmail.com and rajatmaths@gmail.com}
\keywords{Adjacency matrices, Inhomogeneous Erd\H{o}s-R\'enyi random graph, Largest eigenvalue, Scaling limit, Stochastic block model}
\subjclass[2000]{60B20, 05C80, 46L54 }

\newcommand{\acr}{\newline\indent}

\begin{abstract}
In this article, an inhomogeneous Erd\H{o}s-R\'enyi random graph on $\{1,\ldots, N\}$ is considered, where an edge is placed between vertices $i$ and $j$ with probability $\vep_N f(i/N,j/N)$, for $i\le j$, the choice being made independently for each pair. The integral operator $I_f$ associated with the bounded function $f$ is assumed to be symmetric, non-negative definite, and of finite rank $k$. We study the edge of the spectrum of the adjacency matrix of such an inhomogeneous Erd\H{o}s-R\'enyi random graph under the assumption that $N\vep_N\to \infty$ sufficiently fast. Although the bulk of the spectrum of the adjacency matrix, scaled by $\sqrt{N\vep_N}$, is compactly supported, the $k$-th largest eigenvalue goes to infinity. It turns out that the largest eigenvalue after appropriate scaling and centering converges to a Gaussian law, if the largest eigenvalue of $I_f$ has multiplicity $1$. If $I_f$ has $k$ distinct non-zero eigenvalues, then the joint distribution of the $k$ largest eigenvalues converge jointly to a multivariate Gaussian law. The first order behaviour of the eigenvectors is derived as a byproduct of the above results. The results complement the homogeneous case derived by \cite{Erdos1}. 
\end{abstract}

\maketitle

\section{Introduction}\label{sec:intro} 
Given a graph on $N$ vertices, say, $\{1,\ldots, N\}$, let $A_N$ denote the adjacency matrix of the graph, whose $(i,j)$-th entry is $1$ if there is an edge between vertices $i$ and $j$ and $0$ otherwise. Important statistics of the graph are the eigenvalues and eigenvectors of $A_N$ which encode crucial information about the graph. The present article considers the generalization of the most studied random graph, namely the Erd\H{o}s--R\'enyi random graph (ERRG). It is a graph on $N$ vertices where an edge is present independently with probability $\vep_N$. The adjacency matrix of the ERRG is a symmetric matrix with diagonal entries zero, and the entries above the diagonal are independent and identically distributed Bernoulli random variables with parameter $\vep_N$. We consider an inhomogeneous extension of the ERRG where the presence of an edge between vertices $i$ and $j$ is given by a Bernoulli random variable with parameter $p_{i,j}$ and these $\{p_{i,j}:\, 1\le i< j\le N\}$ need not be same. When $p_{i,j}$ are same for all vertices $i$ and $j$ it shall be referred as (homogeneous) ERRG. 

 The mathematical foundations of inhomogeneous ERRG where the connection probabilities $p_{i,j}$ come from a discretization of a symmetric, non-negative function $f$ on $[0,1]^2$ was initiated in \cite{bollobas2007phase}. The said article considered edge probabilities given by
\[
p_{i,j}= \frac{1}{N} f\left( \frac{i}{N},\, \frac{j}{N}\right).
\]
In that case the average degree is bounded and the phase transition picture on the largest cluster size was studied in the same article (see also \cite{van2013, bhamidi2010} for results on inhomogeneous ERRG).
The present article considers a similar set-up where the average degree is unbounded and studies the properties of eigenvalues of the adjacency matrix. The connection probabilities are given by
$$p_{i,j}=\vep_N f\left(\frac{i}{N}, \frac{j}{N}\right)$$
with the assumption that 
\begin{equation}\label{eq:non-sparse}
N\vep_N\to\infty.
\end{equation}

Let $\lambda_1(A_N)\ge \ldots \ge \lambda_N(A_N)$ be the eigenvalues of $A_N$. It was shown in \cite{chakrabarty2018spectra} (see also \cite{zhu:2018} for a graphon approach) that the  empirical distribution of the centered adjacency matrix converges, after scaling with $\sqrt{N\vep_N}$, to a compactly supported measure $\mu_f$. When $f\equiv 1$, the limiting law $\mu_f$ turns out to be the semicircle law. Note that $f\equiv 1$ corresponds to the (homogeneous) ERRG (see \cite{TranVuWang1, ding2010spectral} also for the homogeneous case). Quantitative estimates on the largest eigenvalue of the homogeneous case (when $N\vep_N\gg (\log N)^4$) were studied in \cite{komlos1981eigen,vu2007spec} and it follows from their work that the smallest and second largest eigenvalue converge to the edge of the support of semicircular law. The results were improved recently in \cite{Benaych2017spectral} and the condition on sparsity can be extended to the case $N\vep_N\gg \log N$ (which is also the connectivity threshold). It was shown that inhomogeneous ERRG also has a similar behaviour. The largest eigenvalue of inhomogeneous ERRG when $N\vep_N\ll \log N$ was  treated in \cite{benaych2019largest}. Under the assumption that $N^{\xi}\ll N\vep_N$ for some $\xi\in (2/3, 1]$, it was proved in \cite[Theorem 2.7]{Erdos2}  that the second largest eigenvalue of the (homogeneous) ERRG after appropriate centering and scaling converges in distribution to the Tracy-Widom law. The results were recently improved in \cite{lee2018}. The properties of the largest eigenvector in the homogeneous case was studied in \cite{TranVuWang1, Erdos2, lee2018, MR4089498, alt:ducatez:knowles:2020}. 

The scaling limit of the maximum eigenvalue of inhomogeneous ERRG also turns out to be interesting. The fluctuations of the maximum eigenvalue in the  homogeneous case were studied in \cite{Erdos1}. It was proved that
$$ (\vep_N(1-\vep_N))^{-1/2}\left(\lambda_1( A_N)- \E[\lambda_1(A_N)]\right) \Rightarrow N(0, 2).$$
The above result was shown under the assumption that 
\begin{equation}\label{eq.vep}
(\log N)^{\xi} \ll N\vep_N
\end{equation}
for some $\xi>8$, which is a stronger assumption than \eqref{eq:non-sparse}. 

It is well known that in the classical case of a (standard) Wigner matrix, the largest eigenvalue converges to the Tracy-Widom law. We note that there is a different scaling between the edge and bulk of the spectrum in ERRG. As pointed out before,  the bulk is of the order $(N\vep_N)^{1/2}$ and the order of the largest eigenvalue is $N\vep_N$. Letting
\begin{equation}
\label{eq.defw}W_N=A_N-\E(A_N)\,,
\end{equation}
where $\E(A_N)$ is the entrywise expectation of $A_N$, it is easy to see that
\[
A_N=\vep_N\one\one^\prime+W_N\,,
\]
where $\one$ is the $N\times1$ vector with each entry $1$. Since the empirical spectral distribution of $(N\vep_N)^{-1/2}W_N$ converges to semi-circle law, the largest eigenvalue of the same converges to $2$ almost surely.  
As $\E[A_N]$ is a rank-one matrix, it turns out that the largest eigenvalue of $A_N$ scales like $N\vep_N$, which is different from the bulk scaling. 

The above behaviour can be treated as a special case of the perturbation of a Wigner matrix. When $W_N$ is a symmetric random matrix with independent and identically distributed entries with mean zero and finite variance $\sigma^2$ and the deformation is of the form $$M_N= \frac{W_N}{\sqrt{N}}+ P_N,$$
the largest eigenvalue is well-studied. Motivated by the study of adjacency matrix of homogeneous ERRG, \citet{furedi:kolmos:1981} studied the above deformation with $P_N= m N^{-1/2} \one \one^\prime$, $m\neq 0$. They showed that
$$N^{-1/2}\left( \lambda_1(M_N)- Nm-\frac{\sigma^2}{m}\right)\Rightarrow N(0, 2\sigma^2).$$ Since the bulk of $N^{-1/2} M_N$ lies within $[-2\sigma, \, 2\sigma]$, the largest eigenvalue is detached from the bulk. In general, when the largest eigenvalue of the perturbation has the same order as that of the maximum eigenvalue of $W_N$, the problem is more challenging. One of the seminal results in this direction was obtained in \citet{BBP:2005}. They exhibited a phase transition in the behaviour of the largest eigenvalue for complex Wishart matrix, which is now referred to as the BBP (Baik-Ben Arous- P\'ech\'e) phase transition. It is roughly as follows. Suppose $P_N$ is a deterministic matrix of rank $k$ with non-trivial eigenvalues $\theta_1\ge \theta_2\dots\ge \theta_k>0$. If $\theta_i\le \sigma$, then $\lambda_i(M_N)\to 2\sigma$ almost surely, and if $\theta_i> \sigma$ then
$$\lambda_i\to \theta_i+\frac{\sigma^2}{\theta_i},\,\,  \text{ almost surely}.$$
See \cite{feral:peche, baik:silverstein} for further extensions. It is clear that when $\theta_i>\sigma$ the corresponding eigenvalue lies outside the bulk of the spectrum. The phase transition is also present at the level of fluctuations around $2\sigma$ or $\theta_i+\sigma^2/\theta_i$. It is known that under some moment conditions on the entries of $W_N$ (see \cite{capitaine2009largest, knowles:yin:2013, knowles:yin:2014}), when $\theta_i\le \sigma$, the fluctuations are governed by the Tracy-Widom law, and when $\theta_i>\sigma$, the limiting distribution is given by the eigenvalues of a random matrix of order $k$. This limiting random matrix depends on the eigenvectors of $P_N$ and also on the entries of $W_N$. The non-universal nature was pointed out in \cite{capitaine2009largest}. For example, when $W_N$ is a Gaussian orthogonal ensemble and $P_N=\theta_1 \one\one^{\prime}$ then the limit is Gaussian and if the entries of $W_N$ are not from a Gaussian distribution, then the limit is a convolution of Gaussian and the distribution from which the entries of $W_N$ are sampled. One can find further details in \cite{BBP:2005, benaych:2011, capitaine2009largest, capitaine2012central, knowles:yin:2013, knowles:yin:2014, FBG:Nadakuditi} and the survey by \cite{peche:review}. The case when the rank $k$ depends on $N$ was considered in \cite{capitaine:peche,johansson, lee:schnelli}. Various applications of these results on outliers can be found in the literature, for example,  \cite{ali:couillet, chapon:hachem, coulliet:hachem}.

The adjacency matrix of the inhomogeneous ERRG does not fall directly into purview of the above results, since  $W_N$, as in \eqref{eq.defw}, is a symmetric matrix, with independent entries above the diagonal, but the entries have a variance profile, which also depends on the size of the graph. The inhomogeneity does not allow the use of local laws suitable for semicircle law in an obvious way. The present article aims at extending the results obtained in \cite{Erdos1} for the case that $f$ is a constant to the case that $f$ is a non-negative, symmetric, bounded, Riemann integrable function on $[0,1]^2$ which induces an integral operator of finite rank $k$, under the assumption that \eqref{eq.vep} holds.  The case $k\ge2$ turns out to be substantially difficult than the case $k=1$ for the following reason. If $k=1$, that is, 
\[
\E(A_N)=u_Nu_N^\prime\,,
\]
for some $N\times1$ deterministic column vector $u_N$, then with high probability it holds that
\[
u_N^\prime\left(\lambda I-W_N\right)^{-1}u_N=1\,,
\]
where $\lambda$ is the largest eigenvalue of $A_N$. The above equation facilitates the asymptotic study of $\lambda$. However, when $k\ge2$, the above equation takes a complicated form. The observation which provides a way out of this is that $\lambda$ is also an eigenvalue of a $k\times k$ matrix with high probability; the same is recorded in Lemma \ref{proof.l1} of Section \ref{sec:proof}. Besides, working with the eigenvalues of a $k\times k$ matrix needs more linear algebraic work when $k\ge2$. For example, the proof of Lemma \ref{proof.l6}, which is one of the major steps in the proof of a main result, becomes a tautology when $k=1$.

The following results are obtained in the current paper. If the largest eigenvalue of the integral operator has multiplicity $1$, then the largest eigenvalue of the adjacency matrix has a Gaussian fluctuation. More generally, it is shown that the eigenvalues which correspond to isolated eigenvalues, which will be defined later, of the induced integral operator jointly converge to a multivariate Gaussian law. Under the assumption that the function $f$ is Lipschitz continuous, the leading order term in the expansion of the expected value of the isolated eigenvalues is obtained. Furthermore, under an additional assumption, the inner product of the eigenvector with the discretized eigenfunction of the integral operator corresponding to the other eigenvalues is shown to have a Gaussian fluctuation. Some important examples of such $f$ include the rank-one case, and the stochastic block models. It remains an open question to see if the $(k+1)$-th eigenvalue follows a Tracy-Widom type scaling. 

The mathematical set-up and the main results of the paper are stated in Section \ref{sec:result}. Theorem \ref{t.main} shows that of the $k$ largest eigenvalues, the isolated ones, centred by their mean and appropriately scaled, converge to a multivariate normal distribution. Theorem \ref{t.mean} studies the first and second order of the expectation of the top $k$ isolated eigenvalues. Theorems \ref{t.eigvec} and \ref{t.flac} study the behaviour of the eigenvectors corresponding to the top $k$ isolated eigenvalues. Section \ref{sec:eg} contains the special case when $f$ is rank one and the example of stochastic block models.  A few preparatory estimates are noted in Section \ref{sec:est}, which are used later in the proofs of the main results, given in Section \ref{sec:proof}. The estimates in Section \ref{sec:est} are proved in Section \ref{sec:app}.

\section{The set-up and the results}\label{sec:result}
Let $f:[0,1]\times[0,1]\to[0,\infty)$ be a function which is symmetric, bounded, and Riemann integrable, that is, 
\begin{equation}
\label{eq.sym}f(x,y)=f(y,x)\,,0\le x,y\le1\,,
\end{equation}
and the set of discontinuities of $f$ in $[0,1]\times[0,1]$ has Lebesgue measure zero. 

The integral operator $I_f$ with kernel $f$ is defined from $L^2[0,1]$ to itself by
\[
\bigl(I_f(g)\bigr)(x)=\int_0^1f(x,y)g(y)\,dy\,,\,0\le x\le1\,.
\]
Besides the above, we assume that $I_f$ is a non-negative definite operator and the range of $I_f$ has a finite dimension.

Under the above assumptions $I_f$ turns out to be a compact self-adjoint operator, and from the spectral theory one obtains 
$\theta_1\ge\theta_2\ge\ldots\ge\theta_k>0$ as the non-zero eigenvalues of $I_f$ (where $k$ is the dimension of the range of $I_f$), and eigenfunctions $r_i$ corresponding to $\theta_i$. Therefore, $\{r_1,\ldots,r_k\}$ is an orthonormal set in $L^2[0,1]$, and by assumption, each $r_i$ is Riemann integrable (see Lemma \ref{lemma:RIeigenfunctions} in Section \ref{sec:app}). Also, for any $g\in L^2[0,1]$ one has
$$I_f(g) =\sum_{i=1}^k \theta_i \langle \,  r_i, g\rangle_{L^2[0,1]} r_i.$$
Note that this gives
$$ \int_0^1 \left(\sum_{i=1}^k \theta_i r_i(x) r_i(y) g(y)\right)\, dy= \int_0^1 f(x,y) g(y)\,  dy \, \text{ for almost all $x\in [0,1]$}.$$
Since $g$ is an arbitrary function in $L^2[0,1]$ this immediately gives
\[
f(x,y)=\sum_{i=1}^k\theta_ir_i(x)r_i(y)\,,\text{ for almost all }(x,y)\in[0,1]\times[0,1].
\]

Since the functions on both sides of the above equation are Riemann integrable, the corresponding Riemann sums are approximately equal, and hence there is no loss of generality in assuming that the above equality holds for every $x$ and $y$. 

That is, we \textbf{now assume that}
\begin{equation}
\label{eq.decomp}f(x,y)=\sum_{i=1}^k\theta_ir_i(x)r_i(y)\ge0\,,\text{ for all }(x,y)\in[0,1]\times[0,1]\,,
\end{equation}
where $\theta_1\ge\ldots\ge\theta_k>0$ and $\{r_1,\ldots,r_k\}$ is an orthonormal set in $L^2[0,1]$. The assumptions on $r_1,\ldots,r_k$ are listed below for easy reference.

\noindent\textbf{Assumption F1.} The functions $r_1,\ldots,r_k$ from $[0,1]$ to $\bbr$ are bounded and Riemann integrable.

\noindent\textbf{Assumption F2.} For each $i=1,\ldots,k$, $r_i$ is Lipschitz, that is, 
\[
|r_i(x)-r_i(y)|\le K_i|x-y|\,,
\]
for some fixed $K_i<\infty$. This is clearly stronger than Assumption F1, and will be needed in a few results. A consequence of this assumption is that there exists $K$ such that
\begin{equation}
\label{eq.lip}\left|f(x,y)-f(x^\prime,y^\prime)\right|\le K\left(|x-x^\prime|+|y-y^\prime|\right)\,,0\le x,x^\prime,y,y^\prime\le1\,.
\end{equation}

Let $(\vep_N:N\ge1)$ be a real sequence satisfying
\[
0<\vep_N\le\left[\sup_{0\le x,y\le1}f(x,y)\right]^{-1}\,,N\ge1\,.
\]
The following will be the bare minimum assumption for all the results.

\noindent\textbf{Assumption E1.}
For some $\xi>8$, fixed once and for all, 
\[
\lim_{N\to\infty}\frac1{N\vep_N}(\log N)^\xi=0\,,
\]
that is, \eqref{eq.vep} holds. Furthermore,
\begin{equation}
\label{eq.veplim}\lim_{N\to\infty}\vep_N=\vep_\infty\,,
\end{equation}
for some $\vep_\infty\ge0$. It's worth emphasizing that we do not assume that $\vep_N$ necessarily goes to zero, although that may be the case.

For one result, we shall have to make a stronger assumption on $(\vep_N)$ which is the following.

\noindent\textbf{Assumption E2.} As $N\to\infty$,
\begin{equation}
\label{t.flac.eq1}N^{-2/3}\ll\vep_N\ll1\,.
\end{equation}

For $N\ge1$, let $\bbg_N$ be an inhomogeneous Erd\H{o}s-R\'enyi graph where an edge is placed between vertices $i$ and $j$ with probability $\vep_N f(i/N,j/N)$, for $i\le j$, the choice being made independently for each pair in $\{(i,j):1\le i\le j\le N\}$. Note that we allow self-loops. Let $A_N$ be the adjacency matrix of $\bbg_N$. In other words, $A_N$ is an $N\times N$ symmetric matrix, where $\{A_N(i,j):1\le i\le j\le N\}$ is a collection of independent random variable, and
\[
A_N(i,j)\sim\text{Bernoulli}\left(\vep_N f\left(\frac iN,\frac jN\right)\right)\,,\,1\le i\le j\le N\,.
\]

A few more notations are needed for stating the main results. For a moment, set $\theta_0=\infty$ and $\theta_{k+1}=-\infty$, and define the set of indices $i$ for which $\theta_i$ is isolated as follows:
\[
{\mathcal I}=\{1\le i\le k: \theta_{i-1}>\theta_i>\theta_{i+1}\}\,.
\] 
For an $N\times N$ real symmetric matrix $M$, let $\lambda_1(M)\ge\ldots\ge\lambda_N(M)$ denote its eigenvalues, as mentioned in Section \ref{sec:intro}. Finally, after the following definition, the main results will be stated.

\begin{defn}
A sequence of events $E_N$ occurs \emph{with high probability}, abbreviated as w.h.p., if
\[
P(E_N^c)=O\left( e^{-(\log N)^\eta}\right)\,,
\]
for some $\eta>1$. For random variables $Y_N,Z_N$, 
\[
Y_N=O_{hp}(Z_N)\,,
\]
means there exists a deterministic finite constant $C$ such that
\[
|Y_N|\le C|Z_N|\text{ w.h.p.}\,,
\]
and
\[
Y_N=o_{hp}(Z_N)\,,
\]
means that  for all $\delta>0$,
\[
|Y_N|\le\delta|Z_N|\text{ w.h.p.}
\]

We shall say
\[
Y_N=O_p(Z_N)\,,
\]
to mean that 
\[
\lim_{x\to\infty}\sup_{N\ge1}P(|Y_N|>x|Z_N|)=0\,,
\]
and 
\[
Y_N=o_p(Z_N)\,,
\]
to mean that for all $\delta>0$,
\[
\lim_{N\to\infty}P(|Y_N|>\delta|Z_N|)=0\,.
\]
\end{defn}

The reader may note that if $Z_N\neq0$ a.s., then ``$Y_N=O_{p}(Z_N)$'' and ``$Y_N=o_{p}(Z_N)$'' are equivalent to ``$(Z_N^{-1}Y_N:N\ge1)$ is stochastically tight'' and ``$Z_N^{-1}Y_N\prob0$'', respectively. Besides, ``$Y_N=O_{hp}(Z_N)$'' is a much stronger statement than ``$Y_N=O_p(Z_N)$'', and so is ``$Y_N=o_{hp}(Z_N)$''  than ``$Y_N=o_{p}(Z_N)$''. 

In the rest of the paper, the subscript `$N$' is dropped from notations like $A_N$, $W_N$, $\vep_N$ etc.\ and the ones that will be introduced. The first result is about the first order behaviour of $\lambda_i(A)$.

\begin{theorem} 
\label{t.inprob} Under Assumptions E1.\ and F1., for every $1\le i\le k$,
\[
\lambda_i(A)=N\vep\theta_i\left(1+o_{hp}(1)\right)\,.
\]
\end{theorem}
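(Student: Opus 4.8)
The plan is to decompose $A = \vep\sum_{i=1}^k \theta_i v_i v_i^\prime + W$, where $v_i$ is the $N\times 1$ vector with $j$-th entry $r_i(j/N)$, so that $\E(A) = \vep\sum_i \theta_i v_i v_i^\prime$ has rank $k$ (for $N$ large), and $W = A - \E(A)$ is a centered symmetric matrix with independent entries above the diagonal. The first ingredient is control of the spectral norm of $W$: under Assumption E1, one has $\|W\| = O_{hp}(\sqrt{N\vep})$ — this is exactly the statement (available from the graphon/local-law literature cited, e.g.\ \cite{Benaych2017spectral,chakrabarty2018spectra}) that the bulk of $(N\vep)^{-1/2}W$ is asymptotically supported in a compact set. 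Since $\vep \le (\sup f)^{-1}$ is bounded, $N\vep\theta_i \gg \sqrt{N\vep}$, so the low-rank signal part dominates $W$ in operator norm. The second ingredient is the near-orthonormality of the $v_i$: by Riemann integrability of the $r_i r_j$ (Assumption F1), $N^{-1}\langle v_i, v_j\rangle \to \langle r_i, r_j\rangle_{L^2} = \delta_{ij}$, so $N^{-1}$ times the Gram matrix of $\{v_i\}$ converges to the identity, and $\E(A)$ has exactly $k$ nonzero eigenvalues, each asymptotically $N\vep\theta_i(1+o(1))$.

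With these two inputs, the result is a Weyl-inequality argument. Writing $\mu_i := \lambda_i(\E(A))$, Weyl's inequality gives
\[
|\lambda_i(A) - \mu_i| \le \|W\| = O_{hp}(\sqrt{N\vep})
\]
for every $i$. Combined with $\mu_i = N\vep\theta_i(1+o(1))$ (from the Gram-matrix convergence above) and $\sqrt{N\vep} = o(N\vep)$ (which is Assumption E1, in fact much weaker than what E1 gives), this yields
\[
\lambda_i(A) = N\vep\theta_i\,(1 + o_{hp}(1))\,,\qquad 1\le i\le k\,,
\]
which is the claim. For $i\le k$ one also needs that the perturbation does not push $\lambda_i(A)$ below the bulk edge, but since $N\vep\theta_i \gg \sqrt{N\vep}$ this is automatic from the same Weyl bound.

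The main obstacle is establishing $\|W\| = O_{hp}(\sqrt{N\vep})$ with the \emph{w.h.p.} (stretched-exponential) control demanded by the definition, for a matrix with a genuine variance profile that itself depends on $N$. The i.i.d.\ Wigner tools do not apply directly; one must invoke a norm bound for inhomogeneous sparse random matrices valid down to $N\vep \gg (\log N)^\xi$, $\xi>8$ — this is precisely the regime where Assumption E1 is tailored, and presumably where the estimates collected in Section \ref{sec:est} come in. A secondary (routine) point is upgrading the convergence $N^{-1}\langle v_i,v_j\rangle \to \delta_{ij}$ to a rate sufficient to conclude $\mu_i/(N\vep\theta_i)\to 1$; since the $r_i$ are merely Riemann integrable, one cannot expect a polynomial rate, but convergence to $1$ with $o(1)$ error is all that is needed here, so Riemann-sum convergence suffices and no quantitative input is required for this particular theorem.
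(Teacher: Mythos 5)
Your proposal is correct and follows essentially the same route as the paper: Weyl's inequality $|\lambda_i(A)-\lambda_i(\E(A))|\le\|W\|$, the norm bound $\|W\|=O_{hp}(\sqrt{N\vep})$ (which is exactly Lemma \ref{est.l1}, proved in the Appendix by the trace method of \cite{vu2007spec}), and the Riemann-sum convergence $e_j^\prime e_l\to\one(j=l)$ giving $\lambda_i(\E(A))=N\vep\theta_i(1+o(1))$ via the representation $\E(A)=N\vep\sum_j\theta_je_je_j^\prime$. You correctly identified both the structure of the argument and the location of the only nontrivial technical input.
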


An immediate consequence of the above is that for all $1\le i\le k$, $\lambda_i(A)$ is non-zero w.h.p.\ and hence dividing by the same is allowed, as done in the next result. Define 
\begin{equation}
\label{eq.defej}e_i=\left[
\begin{matrix}
N^{-1/2}r_i(1/N)\\
N^{-1/2}r_i(2/N)\\
\vdots\\
N^{-1/2}r_i(1)\\
\end{matrix}
\right]\,,\,1\le i\le k\,.
\end{equation}

The second main result studies the asymptotic behaviour of $\lambda_i(A)$, for $i\in\mathcal I$, after appropriate centering and scaling.

\begin{theorem}
\label{t.secorder}  Under Assumptions E1.\ and F1., for every $i\in\cal I$, as $N\to\infty$,
\[
\lambda_i(A)=\E\left(\lambda_i(A)\right)+\frac{N\theta_i\vep}{\lambda_i(A)}e_i^\prime We_i+o_p(\sqrt\vep)\,,
\]
where $W$ is as defined in \eqref{eq.defw}.
\end{theorem}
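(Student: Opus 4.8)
The plan is to build on the reduction, stated in Lemma \ref{proof.l1}, that with high probability $\lambda:=\lambda_i(A)$ is an eigenvalue of a $k\times k$ matrix $M(\lambda)$ whose entries are of the form $\delta_{ab}+\text{(corrections)}$, built from quadratic forms $e_a^\prime(\lambda I-W)^{-1}e_b$ together with the weights $N\vep\theta_a$. Concretely, since $\E(A)=\sum_{a=1}^k N\vep\theta_a e_a e_a^\prime$ (up to the diagonal discrepancy, which is negligible), the eigenvalue equation $A v=\lambda v$ with $v=\sum_a c_a e_a + \text{orthogonal part}$ leads, after projecting onto $\mathrm{span}\{e_1,\dots,e_k\}$ and using $(\lambda I-W)^{-1}$, to the requirement that $\lambda$ solve $\det\bigl(I - D^{1/2} G(\lambda) D^{1/2}\bigr)=0$, where $D=\diag(N\vep\theta_1,\dots,N\vep\theta_k)$ and $G(\lambda)_{ab}=e_a^\prime(\lambda I-W)^{-1}e_b$. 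The first step is therefore to invoke the estimates of Section \ref{sec:est} to get, w.h.p., the expansion
\[
e_a^\prime(\lambda I-W)^{-1}e_b=\frac1\lambda\langle e_a,e_b\rangle+\frac1{\lambda^2}e_a^\prime W e_b+\frac1{\lambda^3}e_a^\prime W^2 e_b+\dots,
\]
controlling each term: $\langle e_a,e_b\rangle=\delta_{ab}+O(1/N)$ by Riemann-sum convergence (Assumption F1), $e_a^\prime W e_b=O_p(\sqrt{\vep/N}\,)$ by a variance computation, and $e_a^\prime W^2 e_b = \|We_b\|^2\delta_{ab}(1+o_{hp}(1))$-type terms that concentrate around their mean, which is of order $N\vep$ (this is where the variance profile enters and is exactly the source of the $\sigma^2/\theta$-type shift). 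Using $\lambda\sim N\vep\theta_i\to\infty$ from Theorem \ref{t.inprob}, the tail of the Neumann series is $o_p$ of what we need.

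The second step is to isolate the $i$-th branch of the determinantal equation. Because $i\in\mathcal I$, the eigenvalue $\theta_i$ is simple and separated from $\theta_{i-1},\theta_{i+1}$; after rescaling by $N\vep$ the relevant $k\times k$ matrix $D^{1/2}G(\lambda)D^{1/2}$ is, to leading order, $\diag(\theta_1/\theta,\dots,\theta_k/\theta)\cdot$(something $\to 1$), so the equation ``$1$ is an eigenvalue'' splits into $k$ nearly-decoupled scalar equations, and the one governing $\lambda_i(A)$ is, w.h.p.,
\[
1=\frac{N\vep\theta_i}{\lambda}\Bigl(1+O(1/N)\Bigr)+\frac{N\vep\theta_i}{\lambda^2}e_i^\prime W e_i+\frac{N\vep\theta_i}{\lambda^3}e_i^\prime W^2 e_i+\bigl(\text{off-diagonal contributions}\bigr)+o_p(\vep/\lambda).
\]
The off-diagonal contributions, coming from $e_a^\prime W e_b$ with $a\ne b$ fed through the inverse of the $(k-1)\times(k-1)$ complementary block, must be shown to be $o_p(\sqrt\vep/\lambda)$ using the spectral gap in $\mathcal I$ and the size bound $e_a^\prime W e_b=O_p(\sqrt{\vep/N})$; this is the algebraic step that the introduction flags as genuinely needing $k\ge2$ work (cf. Lemma \ref{proof.l6}). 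Solving the displayed scalar equation for $\lambda$ and multiplying through by $\lambda$ gives
\[
\lambda=N\vep\theta_i+\frac{N\vep\theta_i}{\lambda}e_i^\prime W e_i+\frac{N\vep\theta_i}{\lambda^2}e_i^\prime W^2 e_i+O(\vep)+o_p(\sqrt\vep).
\]

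The third step is to absorb all deterministic-at-this-order terms into $\E(\lambda_i(A))$. The point is that $e_i^\prime W^2 e_i$ and the $O(1/N)$ corrections, while not themselves deterministic, differ from their expectations by $o_p(\sqrt\vep/(N\vep))\cdot\lambda = o_p(\sqrt\vep)$: indeed $\Var(e_i^\prime W^2 e_i)$ is of smaller order than $(N\vep)^2\cdot\vep$ because $W$ has entries of size $O(\sqrt\vep)$ and the quartic form has $O(N^2)$ leading terms with cancellation, so after multiplying by $N\vep\theta_i/\lambda^2=O(1/(N\vep))$ the fluctuation is $o_p(\sqrt\vep)$. The only term surviving at the $\sqrt\vep$ scale with genuine randomness is $\frac{N\vep\theta_i}{\lambda}e_i^\prime W e_i$, which has standard deviation of exact order $\sqrt\vep$. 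Taking expectations of the whole expansion and subtracting yields precisely
\[
\lambda_i(A)=\E(\lambda_i(A))+\frac{N\theta_i\vep}{\lambda_i(A)}e_i^\prime W e_i+o_p(\sqrt\vep),
\]
after noting $\E\bigl(\frac{N\vep\theta_i}{\lambda}e_i^\prime W e_i\bigr)=o_p(\sqrt\vep)$ as well (since $e_i^\prime W e_i$ has mean zero and $\lambda$ concentrates, a conditioning/Cauchy--Schwarz argument handles the dependence between $\lambda$ and $e_i^\prime W e_i$). I expect the main obstacle to be the rigorous control of the self-consistent replacement of $\lambda$ by its typical value $N\vep\theta_i$ inside the coefficients while keeping errors $o_p(\sqrt\vep)$ and simultaneously controlling the quartic form $e_i^\prime W^2 e_i$ and the off-diagonal $k\times k$ block inversion uniformly — this is the part where the variance profile of $W$ and the absence of a ready-made local law force one to use the hand-crafted estimates of Section \ref{sec:est} rather than black-box tools, and it is exactly the step that the paper singles out (Lemma \ref{proof.l6}) as nontrivial for $k\ge2$.
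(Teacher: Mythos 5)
Your overall architecture matches the paper's: reduce to a $k\times k$ secular problem (the paper's Lemma \ref{proof.l1}, proved via Gershgorin), Neumann-expand the resolvent, isolate the $i$-th diagonal entry, and identify $e_i^\prime We_i$ as the only fluctuation surviving at scale $\sqrt\vep$. However, the sketch has concrete errors at exactly the points where the real work lies. First, the bound $e_a^\prime We_b=O_p(\sqrt{\vep/N})$ is wrong: since $\Var(e_a^\prime We_b)=O(\vep)$ (Lemma \ref{est.l4}), the correct order is $O_p(\sqrt\vep)$ --- you even say so yourself later when you assert the main term has standard deviation of exact order $\sqrt\vep$. With the correct order, the off-diagonal entries of the normalized $k\times k$ matrix are $O_p(\sqrt\vep)$, the same size as the diagonal fluctuation you are trying to extract, so they cannot be dismissed by a first-order size bound; their negligibility is a second-order perturbation effect (contribution of order $\vep/\mathrm{gap}$), and establishing this rigorously --- while keeping the deterministic and random parts of the off-diagonal entries separate --- is precisely the content of the paper's Lemma \ref{proof.l6}, which you cite rather than prove. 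Second, $e_a^\prime e_b=\delta_{ab}+O(1/N)$ requires the Lipschitz Assumption F2; Theorem \ref{t.secorder} assumes only F1, under which the Riemann sums converge at no guaranteed rate. Consequently the deterministic off-diagonal entries of $Y_0$ need not be $O(\vep)$ and cannot be discarded; they must be retained and absorbed into the centering (this is why the paper compares $\lambda_i(H)$ with $\lambda_i(M)$ rather than with $0$).

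Two further steps are asserted rather than carried out. The self-consistency --- $\lambda$ appears inside every coefficient of its own expansion --- is resolved in the paper by constructing a deterministic fixed point $\tilde\mu$ (Lemmas \ref{proof.l4}--\ref{proof.l5}) and showing the substitution error is $o_{hp}(\sqrt\vep)$; you flag this as the main obstacle but give no argument. And replacing the deterministic centering by $\E(\lambda_i(A))$ is not a matter of ``taking expectations of the expansion'': the errors are only controlled in probability, so one needs them to be $o_{hp}$ (not merely $o_p$) together with the almost-sure bound $|\lambda_i(A)|\le N^2$ and Cauchy--Schwarz on the exceptional event, as in the final part of the paper's proof. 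As written, the proposal identifies the right answer and the right skeleton but leaves genuine gaps at the off-diagonal control, the fixed-point step, and the passage to the expectation.
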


The next result is the corollary of the previous two.

\begin{theorem}\label{t.main}  Under Assumptions E1.\ and F1.,
if $\mathcal I$ is a non-empty set, then as $N\to\infty$,
\begin{equation}
\label{t.main.claim}\left(\vep^{-1/2}\left(\lambda_i(A)-\E[\lambda_i(A)]\right):i\in{\mathcal I}\right)\Rightarrow\left(G_i:i\in{\mathcal I}\right)\,,
\end{equation}
where the right hand side is a multivariate normal random vector in $\bbr^{|{\mathcal I}|}$, with mean zero and
\begin{equation}
\label{eq.cov}\Cov(G_i,G_j)=2\int_0^1\int_0^1 r_i(x)r_i(y)r_j(x)r_j(y)f(x,y)\left[1-\vep_\infty f(x,y)\right]\,dx\,dy  \,,
\end{equation}
for all $i,j\in\mathcal I$.
\end{theorem}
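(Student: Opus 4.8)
The plan is to derive Theorem \ref{t.main} as a direct consequence of Theorems \ref{t.secorder} and \ref{t.inprob}, the only remaining work being the computation of the limiting covariance structure of the Gaussian vector obtained by a multivariate central limit theorem applied to the quadratic forms $e_i^\prime W e_i$. First I would invoke Theorem \ref{t.inprob} to write $\lambda_i(A)=N\vep\theta_i(1+o_{hp}(1))$ for each $i\in\mathcal I$; hence $N\theta_i\vep/\lambda_i(A)=1+o_{hp}(1)=1+o_p(1)$. Substituting this into the expansion of Theorem \ref{t.secorder} gives
\[
\vep^{-1/2}\bigl(\lambda_i(A)-\E[\lambda_i(A)]\bigr)=\vep^{-1/2}\bigl(1+o_p(1)\bigr)e_i^\prime We_i+o_p(1)\,,
\]
so by Slutsky's theorem it suffices to prove that $\bigl(\vep^{-1/2}e_i^\prime We_i:i\in\mathcal I\bigr)$ converges jointly to the stated multivariate normal law (one must first check that $\vep^{-1/2}e_i^\prime We_i$ is stochastically tight, which will come out of the variance computation).

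The core step is the joint CLT. Writing out $e_i^\prime We_i=N^{-1}\sum_{a,b=1}^N r_i(a/N)r_i(b/N)W(a,b)$ and recalling that $W(a,b)=A(a,b)-\vep f(a/N,b/N)$ has independent entries on and above the diagonal, each $\vep^{-1/2}e_i^\prime We_i$ is (up to the diagonal, which contributes negligibly since it is a sum of $N$ bounded independent terms scaled by $N^{-1}\vep^{-1/2}=o(1)$ under Assumption E1) a sum of independent mean-zero terms
\[
\frac{2}{N\sqrt\vep}\sum_{a<b}r_i(a/N)r_i(b/N)W(a,b)\,.
\]
For a joint CLT I would use the Cram\'er--Wold device: fix real $(c_i:i\in\mathcal I)$ and consider $\sum_i c_i\vep^{-1/2}e_i^\prime We_i=\frac{2}{N\sqrt\vep}\sum_{a<b}\bigl(\sum_i c_ir_i(a/N)r_i(b/N)\bigr)W(a,b)$, a triangular array of row-wise independent summands, and verify the Lindeberg condition. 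The boundedness of the $r_i$ (Assumption F1) makes each summand $O(N^{-1}\vep^{-1/2})=o(1)$ uniformly, so Lindeberg is immediate once the variances are controlled. The variance of the linear combination is
\[
\frac{4}{N^2\vep}\sum_{a<b}\Bigl(\sum_i c_ir_i(a/N)r_i(b/N)\Bigr)^2\Var(W(a,b))
=\frac{4}{N^2\vep}\sum_{a<b}\Bigl(\sum_i c_ir_i(a/N)r_i(b/N)\Bigr)^2\vep f(a/N,b/N)\bigl(1-\vep f(a/N,b/N)\bigr)\,.
\]
Since $r_i$ and $f$ are Riemann integrable and bounded, and $\vep\to\vep_\infty$, this Riemann sum converges to
\[
2\int_0^1\int_0^1\Bigl(\sum_i c_ir_i(x)r_i(y)\Bigr)^2 f(x,y)\bigl(1-\vep_\infty f(x,y)\bigr)\,dx\,dy
=\sum_{i,j}c_ic_j\,\Cov(G_i,G_j)
\]
with $\Cov(G_i,G_j)$ as in \eqref{eq.cov} (the factor $2$ arises from $2\times\tfrac12$, i.e.\ symmetrizing the sum over $a<b$ to a full sum over $(a,b)$ and halving back). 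This identifies the limit as the claimed Gaussian, completing the Cram\'er--Wold argument and hence the proof.

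The main obstacle is not the CLT itself — for bounded Riemann-integrable weights this is a textbook Lindeberg argument — but rather making sure the error terms genuinely wash out at the right scale. Specifically one must confirm: (i) that the $o_p(\sqrt\vep)$ error in Theorem \ref{t.secorder}, after multiplication by $\vep^{-1/2}$, is $o_p(1)$ (this is immediate from the definition); (ii) that the multiplicative factor $N\theta_i\vep/\lambda_i(A)$, which is random, can be replaced by $1$ without affecting the weak limit — this needs the stochastic tightness of $\vep^{-1/2}e_i^\prime We_i$, which is exactly the variance bound above, together with Slutsky; and (iii) that the diagonal contribution $N^{-1}\sum_a r_i(a/N)^2 W(a,a)$, scaled by $\vep^{-1/2}$, is $o_p(1)$: its variance is $N^{-2}\vep^{-1}\sum_a r_i(a/N)^4\vep f(a/N,a/N)(1-\vep f(a/N,a/N))=O(N^{-1})\to0$. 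Handling these three points carefully, and invoking the Riemann-integrability of the product $r_i r_i r_j r_j f$ to pass from the discrete quadratic-form covariance to the integral in \eqref{eq.cov}, is the whole content of the argument; there is no further structural difficulty once Theorems \ref{t.inprob} and \ref{t.secorder} are in hand.
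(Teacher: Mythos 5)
Your proposal is correct and is essentially the paper's own argument: the paper likewise deduces Theorem \ref{t.main} from Theorems \ref{t.inprob} and \ref{t.secorder} together with the joint CLT \eqref{eq.clt} for the quadratic forms $\vep^{-1/2}e_i^\prime We_i$, whose covariance is computed via the same Riemann-sum limit as in \eqref{eq.cov.rhs}. Your additional care about the diagonal terms, the Cram\'er--Wold/Lindeberg verification, and the replacement of $N\theta_i\vep/\lambda_i(A)$ by $1$ via Slutsky simply fills in details the paper leaves implicit.
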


For $i,j\in\mathcal I$, that $W$ is a symmetric matrix whose upper triangular entries are independent and zero mean implies that as $N\to\infty$,
\begin{align}
\nonumber&Cov\left(e_i^\prime We_i,e_j^\prime We_j\right)\\
\nonumber&\sim4\sum_{1\le k\le l\le N}\Cov\left(e_i(k)W(k,l)e_i(l),e_j(k)W(k,l)e_j(l)\right)\\
\nonumber&=4N^{-2}\sum_{1\le k\le l\le N}r_i\left(\frac kN\right)r_i\left(\frac lN\right)r_j\left(\frac kN\right)r_j\left(\frac lN\right)\vep f\left(\frac kN,\frac lN\right)\left[1-\vep f\left(\frac kN,\frac lN\right)\right]\\
\nonumber&\sim4\vep\int_0^1\,dx\,\int_x^1\,dy\,r_i(x)r_i(y)r_j(x)r_j(y)f(x,y)\left[1-\vep_\infty f(x,y)\right]\\
\label{eq.cov.rhs}&=2\vep\int_0^1\int_0^1r_i(x)r_i(y)r_j(x)r_j(y)f(x,y)\left[1-\vep_\infty f(x,y)\right]\,dx\,dy\,.
\end{align}
With the help of the above, it may be checked that the Lindeberg-L\'evy central limit theorem implies that as $N\to\infty$,
\begin{align}\label{eq.clt}
\left(\vep^{-1/2}e_i^\prime We_i:i\in\cal I\right)\Rightarrow(G_i:i\in\cal I)\,,
\end{align}
where the right hand side is a zero mean Gaussian vector with covariance given by \eqref{eq.cov}.  Therefore, Theorem \ref{t.main} would follow from  Theorems \ref{t.inprob} and \ref{t.secorder}.

\begin{remark}
If $f>0$ a.e.\ on $[0,1]\times[0,1]$, then the Krein-Rutman theorem (see Lemma~\ref{lemma:KRT}) implies that $1\in\cal I$, and that $r_1>0$ a.e. Thus, in this case, if $\vep_\infty=0$, then
\[
\Var(G_1)=2\int_0^1\int_0^1r_1(x)^2r_1(y)^2f(x,y)\,dx\,dy>0\,.
\]
\end{remark}

\begin{remark}\label{rem.indep} 
For a fixed $\theta>1$, define
\[
f(x,y)=\theta\one\left(x\vee y<\frac12\right)+\one\left(x\wedge y>\frac12\right)\,,0\le x,y\le1\,.
\]
In this case, the integral operator associated with $f$ has exactly two non-zero eigenvalues, which are $\theta/2$ and $1/2$, with corresponding normalized eigenfunctions $r_1(x)=\sqrt2\one(x<1/2)$ and $r_2(x)=\sqrt2\one(x\ge1/2)$, respectively. Let $(\vep_N)$ satisfy Assumption E1.\ and suppose that $\vep_\infty=0$. Theorem \ref{t.main} implies that as $N\to\infty$,
\[
\left(\vep^{-1/2}\left(\lambda_1(A)-\E[\lambda_1(A)]\right),\vep^{-1/2}\left(\lambda_2(A)-\E[\lambda_2(A)]\right)\right)\Rightarrow\left(G_1,G_2\right)\,,
\]
where the right hand side has a bivariate normal distribution with mean zero. Furthermore, since $r_1r_2$ is identically zero, it follows that $G_1$ and $G_2$ are uncorrelated and hence independent. 
\end{remark}

\begin{remark}
That the claim of Theorem \ref{t.main} may not hold if $i\notin\cal I$ is evident from the following example. As in Remark \ref{rem.indep}, suppose that Assumption E1.\ holds and that $\vep_\infty=0$. Let
\[
f(x,y)=\one\left(x\vee y<\frac12\right)+\one\left(x\wedge y>\frac12\right)\,,0\le x,y\le1\,.
\]
In this case, the integral operator associated with $f$ has exactly one non-zero eigenvalue, which is $1/2$, and that has multiplicity $2$, with eigenfunctions $r_1$ and $r_2$ as in Remark \ref{rem.indep}. In other words, $f$ doesn't have any simple eigenvalue. 

Theorem \ref{t.main} itself implies that there exists $\beta_N\in\bbr$ such that
\[
\vep^{-1/2}\left(\lambda_1(A)-\beta\right)\Rightarrow G_1\vee G_2\,,
\]
where $G_1$ and $G_2$ are independent from normal with mean $0$. Furthermore,
\[
\Var(G_1)=2\int_0^1\int_0^1 r_1(x)^2r_1(y)^2f(x,y)\,dx\,dy=8\int_0^1\int_0^1\one\left(x\vee y\le\frac12\right)\,dx\,dy=2\,.
\]
That is, $G_1$ and $G_2$ are i.i.d.\ from $N(0,2)$. Hence, there doesn't exist a centering and a scaling by which $\lambda_1(A)$ converges weakly to a non-degenerate normal distribution.  
\end{remark}

The next main result of the paper studies asymptotics of $\E(\lambda_i(A))$ for $i\in\cal I$. 

\begin{theorem}\label{t.mean} Under Assumptions E1.\ and F2., it holds for all $i\in\cal I$,
\[
\E\left[\lambda_i(A)\right]=\lambda_i(B)+O\left(\sqrt\vep+(N\vep)^{-1}\right)\,,
\]
where $B$ is a $k\times k$ symmetric deterministic matrix, depending on $N$, defined by
\[
B(j,l)=\sqrt{\theta_j\theta_l}N\vep e_j^\prime e_l+\theta_i^{-2}\sqrt{\theta_j\theta_l}(N\vep)^{-1}\E\left(e_j^\prime W^2 e_l\right)\,,1\le j,l\le k\,,
\]
and $e_j$ and $W$ are as defined in \eqref{eq.defej} and \eqref{eq.defw}, respectively.
\end{theorem}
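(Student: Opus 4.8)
The plan is to leverage the fact, recorded in Lemma \ref{proof.l1}, that w.h.p. $\lambda_i(A)$ is an eigenvalue of an explicit $k\times k$ matrix. Let me write $\lambda=\lambda_i(A)$ for short. The starting point is the eigenvalue equation: if $v$ is the unit eigenvector of $A$ for $\lambda$, then writing $A=\sum_j\theta_j N\vep\, e_je_j^\prime+W$ (this being the matrix analogue of \eqref{eq.decomp}, up to a negligible Riemann-sum error) and projecting $(\lambda I-W)v=\sum_j\theta_j N\vep\, e_j(e_j^\prime v)$, one gets $v=\sum_j\theta_j N\vep\,(e_j^\prime v)(\lambda I-W)^{-1}e_j$ on the event $\lambda\notin\mathrm{spec}(W)$, which holds w.h.p. since $\lambda\sim N\vep\theta_i\to\infty$ while $\|W\|=O_{hp}(\sqrt{N\vep})$. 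Taking inner products with $e_l$ for $l=1,\dots,k$ shows that the vector $(e_l^\prime v)_l$ is a null vector of $I-M(\lambda)$, where $M(\lambda)_{jl}=\sqrt{\theta_j\theta_l}\,N\vep\, e_j^\prime(\lambda I-W)^{-1}e_l$ after the symmetrizing rescaling $(e_j^\prime v)\mapsto\sqrt{\theta_j}(e_j^\prime v)$; hence $\det(I-M(\lambda))=0$ w.h.p. This is the $k\times k$ reduction alluded to in the introduction.

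Next I would expand the resolvent. Since $\lambda$ is large, write $(\lambda I-W)^{-1}=\lambda^{-1}\sum_{m\ge0}\lambda^{-m}W^m$, convergent w.h.p. because $\|W\|/\lambda=O_{hp}((N\vep)^{-1/2})$. Keeping terms up to $m=2$ and bounding the tail: the $m=0$ term contributes $\lambda^{-1}\sqrt{\theta_j\theta_l}N\vep\, e_j^\prime e_l$; the $m=1$ term is $\lambda^{-2}\sqrt{\theta_j\theta_l}N\vep\, e_j^\prime We_l$, which is $O_{hp}(\sqrt\vep/\lambda)$ — note $e_j^\prime We_l=O_{hp}(\sqrt\vep)$ by the variance computation as in \eqref{eq.cov.rhs} together with a concentration estimate from Section \ref{sec:est}; the $m=2$ term is $\lambda^{-3}\sqrt{\theta_j\theta_l}N\vep\, e_j^\prime W^2e_l$, and here $e_j^\prime W^2e_l$ concentrates around its mean $\E(e_j^\prime W^2e_l)$, which is of order $N\vep$ (it is essentially a weighted trace of the variance profile), with fluctuations of smaller order; the tail $m\ge3$ contributes $O_{hp}(N\vep\cdot(N\vep)^{-3/2}\cdot\lambda^{-1})=o_{hp}(\lambda^{-1})$. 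Substituting $\lambda\approx N\vep\theta_i$ in the denominators of the second- and third-order terms (the error from this replacement being absorbable, using Theorem \ref{t.inprob}), the equation $\det(I-M(\lambda))=0$ becomes, to the required precision, $\det\big(\lambda I-B-E\big)=0$ with $B$ the deterministic matrix in the statement and $E$ a random error matrix with $\|E\|=O_{hp}(\sqrt\vep+(N\vep)^{-1})$. By eigenvalue perturbation (Weyl), $\lambda_i(A)=\lambda_i(B)+O_{hp}(\sqrt\vep+(N\vep)^{-1})$, provided one checks that it is the $i$-th eigenvalue of $B$ that is picked out — this follows from the first-order statement $\lambda_i(B)=N\vep\theta_i(1+o(1))$ together with the interlacing/ordering already established for the $\lambda_i(A)$.

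The final step is to pass from the w.h.p. pathwise bound to a bound on the expectation. For this I would use that $\lambda_i(A)$ and $\lambda_i(B)$ are both $O(N\vep)$ deterministically on the w.h.p. event, and on the complementary event (probability $O(e^{-(\log N)^\eta})$) one has the crude deterministic bounds $|\lambda_i(A)|\le\|A\|\le N$ and $|\lambda_i(B)|\le\mathrm{tr}|B|=O(N\vep)$, so the contribution of the bad event to $\E|\lambda_i(A)-\lambda_i(B)|$ is $O(Ne^{-(\log N)^\eta})=o((N\vep)^{-1})$; combining, $\E|\lambda_i(A)-\lambda_i(B)|=O(\sqrt\vep+(N\vep)^{-1})$, which gives the claim.

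The main obstacle I anticipate is controlling the resolvent expansion uniformly and, in particular, justifying that the denominators $\lambda$ in the second- and third-order terms may be replaced by $N\vep\theta_i$ with an error that does not exceed $O(\sqrt\vep+(N\vep)^{-1})$: this requires knowing $\lambda_i(A)-N\vep\theta_i$ to better than the naive $o_{hp}(N\vep)$ bound of Theorem \ref{t.inprob}, which is a mild circularity that must be broken by a bootstrap — first use the crude bound to get $\lambda_i(B)$ to leading order $N\vep\theta_i$, then feed this back. A secondary technical point is the sharp concentration of the quadratic forms $e_j^\prime We_l$ and $e_j^\prime W^2e_l$ at the scales $\sqrt\vep$ and (fluctuation) $o(N\vep\cdot\text{stuff})$ respectively, for which the Lipschitz assumption F2 and the estimates of Section \ref{sec:est} are presumably exactly what is needed — F2 ensures the Riemann-sum errors when replacing discretized quantities like $N^{-1}\sum r_i(k/N)^2 f(k/N,l/N)$ by their integrals are $O(N^{-1})$ rather than merely $o(1)$.
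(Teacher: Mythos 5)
Your reduction to a $k\times k$ problem, the resolvent expansion truncated at second order, and the bootstrap replacing $\mu=\lambda_i(A)$ by $N\vep\theta_i$ in the denominators all match the paper's route (Lemmas \ref{proof.l1}--\ref{proof.l3} and \ref{proof.approx}). The gap is in how you pass to the expectation. You assert that the $m=1$ term satisfies $e_j^\prime We_l=O_{hp}(\sqrt\vep)$ and hence that the whole error matrix $E$ is $O_{hp}(\sqrt\vep+(N\vep)^{-1})$, so that the bad event has probability $O(e^{-(\log N)^\eta})$ and contributes negligibly to $\E|\lambda_i(A)-\lambda_i(B)|$. But $e_j^\prime We_l$ has variance of exact order $\vep$ and, normalized by $\sqrt\vep$, converges to a nondegenerate Gaussian (this is precisely \eqref{eq.clt}); so $P(|e_j^\prime We_l|>C\sqrt\vep)$ tends to a positive constant for any fixed $C$, and the bound is only $O_p(\sqrt\vep)$, never $O_{hp}(\sqrt\vep)$. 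Hoeffding gives concentration only at scale $\delta N\vep$ with failure probability $e^{-\delta^2(N\vep)^2}$, which is useless at scale $\sqrt\vep$. Hence your pathwise estimate holds only in probability, the complementary event has probability $o(1)$ rather than $e^{-(\log N)^\eta}$, and the crude bound $N\cdot o(1)$ does not give $o(\sqrt\vep)$. The paper circumvents this by exploiting that the troublesome linear term has \emph{exactly zero mean} once its random prefactor $\mu^{-1}$ is replaced by a deterministic $\bar\mu^{-1}$ — this is the content of Lemmas \ref{proof.l4}--\ref{proof.l6} and the $\E|R|=o(\sqrt\vep)$ computation inside the proof of Theorem \ref{t.secorder}; Theorem \ref{t.mean} is then obtained by adding the in-probability identity of Lemma \ref{proof.approx} to the consequence $\mu-\E(\mu)=O_p(\sqrt\vep)$ of Theorem \ref{t.secorder} and noting that the difference $\E(\mu)-\lambda_i(B)$ is deterministic.

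A secondary problem: you absorb the $m=3$ resolvent term into the tail using the operator-norm bound $\|W\|^3=O_{hp}((N\vep)^{3/2})$. After the rescaling by $\lambda$ this yields an error of order $(N\vep)^{-1/2}$, which exceeds $\sqrt\vep+(N\vep)^{-1}$ whenever $\vep\ll N^{-1/2}$ (allowed under Assumption E1). The cubic term must be kept and estimated through its mean, $\E(e_j^\prime W^3e_l)=O(N\vep)$ (Lemma \ref{est.l4}, \eqref{est.l4.eq2}), together with the concentration of Lemma \ref{est.l3}; only the $n\ge4$ tail can be handled by the generic moment bound \eqref{proof.l4.eq5}.
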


The next result studies the asymptotic behaviour of the normalized eigenvector corresponding to $\lambda_i(A)$, again for isolated vertices $i$. It is shown that the same is asymptotically aligned with $e_i$, and hence it is asymptotically orthogonal to $e_j$. Upper bounds on rates of convergence are obtained.

\begin{theorem}\label{t.eigvec} As in Theorem \ref{t.mean}, let Assumptions E1.\ and F2.\ hold. Then, for a fixed $i\in\cal I$, 
\begin{equation}
\label{t.eigvec.claim1}\lim_{N\to\infty}P\left(\lambda_i(A)\text{ is an eigenvalue of multiplicity }1\right)=1\,.
\end{equation}
If $v$ is an eigenvector, with $L^2$-norm $1$, of $A$ corresponding to $\lambda_i(A)$,  such that
\begin{equation}
\label{t.eigenvec.align}e_i'v\ge0\,,
\end{equation}
then
\begin{equation}
\label{t.eigvec.claim2}e_i^\prime v=1+O_p\left((N\vep)^{-1}\right)\,,
\end{equation}
that is, $N\vep(1-e_i^\prime v)$ is stochastically tight. When $k\ge2$, it holds that
\begin{equation}
\label{t.eigvec.claim3}e_j^\prime v=O_p\left((N\vep)^{-1}\right)\,,\,j\in\{1,\ldots,k\}\setminus\{i\}\,.
\end{equation}
\end{theorem}

\begin{remark}
The assumption \eqref{t.eigenvec.align} is needed in the above result to ensure that $v$ has the correct sign.  If $v$ is replaced by $-v$,  which is also a norm-one eigenvector of $A$ corresponding to $\lambda_i(A)$,  then \eqref{t.eigvec.claim2} would become
\[
e_i^\prime v=-1+O_p\left((N\vep)^{-1}\right)\,.
\]
Arguments similar to those in the proof of Lemma \ref{lemma:KRT} of the Appendix show that $r_1$ can be chosen to be non-negative.  The Perron Frobenius theorem, see \cite{ninio:1976},  implies that for $i=1$, $v$ can be chosen to have non-negative entries.  For this choice, $e_1'v\ge0$, that is,  \eqref{t.eigenvec.align} is automatically satisfied.
\end{remark}

The last main result of this paper studies finer fluctuations of  \eqref{t.eigvec.claim3} under an additional condition.

\begin{theorem}\label{t.flac}
Let $k\ge2$, $i\in\cal I$ and Assumptions E2.\ and F2.\ hold.
If $v$ is as in Theorem \ref{t.eigvec}, then, for all $j\in\{1,\ldots,k\}\setminus\{i\}$,
\[
e_j^\prime v=\frac1{\theta_i-\theta_j}\left[\theta_i\frac1{\lambda_i(A)}e_i^\prime We_j+(N\vep)^{-2}\frac1{\theta_i}\E\left(e_i^\prime W^2 e_j\right)\right]+o_p\left(\frac1{N\sqrt\vep}\right)\,.
\]
\end{theorem}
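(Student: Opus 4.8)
The plan is to revisit the proof of Theorem \ref{t.eigvec}, in particular the equation that produces the bound \eqref{t.eigvec.claim3}, and extract the next-order term. Recall from Lemma \ref{proof.l1} that $\lambda:=\lambda_i(A)$ is, w.h.p., an eigenvalue of a $k\times k$ matrix, and the corresponding $k$-vector (whose entries are essentially $\sqrt{\theta_j}\, e_j^\prime v$ up to normalization) satisfies a fixed-point relation of the form
\[
\Bigl(\lambda I_k - D^{1/2}\bigl[N\vep\, E^\prime E + E^\prime(\lambda I - W)^{-1}W E\bigr]D^{1/2}\Bigr)\,\mathbf{x}=0\,,
\]
where $E=[e_1|\cdots|e_k]$ and $D=\diag(\theta_1,\ldots,\theta_k)$. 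First I would write out the $j$-th coordinate of this relation for $j\neq i$, isolate $e_j^\prime v$, and use $\lambda_i(A)=N\vep\theta_i(1+o_{hp}(1))$ from Theorem \ref{t.inprob} together with $e_i^\prime v = 1+O_p((N\vep)^{-1})$ and $e_j^\prime v = O_p((N\vep)^{-1})$ from Theorem \ref{t.eigvec} to turn the implicit relation into an explicit expansion. The leading contribution comes from the $(i,j)$ off-diagonal entry of $E^\prime(\lambda I-W)^{-1}WE$ acting on the dominant coordinate $x_i$; expanding the resolvent as $(\lambda I - W)^{-1}W = \lambda^{-1}W + \lambda^{-2}W^2 + \lambda^{-1}(\lambda I-W)^{-1}W^3$ and controlling the tail via the estimates in Section \ref{sec:est}, one gets the two displayed terms $\frac{\theta_i}{\lambda}e_i^\prime We_j$ and $(N\vep)^{-2}\theta_i^{-1}\E(e_i^\prime W^2 e_j)$, with the $\lambda^{-1}W$ term multiplied by the eigenvalue-minus-eigenvalue factor $(\theta_i-\theta_j)^{-1}$ that appears after dividing through by the diagonal piece $\lambda - N\vep\theta_i\theta_j e_j^\prime e_j \approx N\vep\theta_i(\theta_i-\theta_j)$ (using $e_j^\prime e_j\to1$, $e_i^\prime e_j\to0$).

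The delicate part is the size of the error. We must show the remainder is $o_p((N\sqrt\vep)^{-1})$, which under Assumption E2.\ (so $\vep\gg N^{-2/3}$, equivalently $N\sqrt\vep\gg N^{2/3}$) is a genuinely finer scale than the $O_p((N\vep)^{-1})$ of Theorem \ref{t.eigvec}. I would split the error into: (i) the difference between $\E(e_i^\prime W^2 e_j)$ and $e_i^\prime W^2 e_j$ — a centered quadratic-type form in the independent entries of $W$, whose variance is $O(\vep^2/N)$ by a direct moment computation (entries of $W$ have variance $\asymp\vep/N$ after the $e$-normalization), giving fluctuations of order $\vep/\sqrt N = o((N\sqrt\vep)^{-1})$ precisely when $\vep^{3/2}N^{1/2}\to0$... which needs $\vep\ll N^{-1/3}$; but since $\vep\ll1$ and we only need $o_p$, a more careful count using $\E|e_i^\prime W^2 e_j|^2$ bounds plus the w.h.p.\ operator-norm bound $\|W\|=O_{hp}(\sqrt{N\vep})$ should close it; (ii) the resolvent-tail term $\lambda^{-1}e_i^\prime(\lambda I-W)^{-1}W^3 e_j$, bounded w.h.p.\ by $\lambda^{-1}\cdot (\lambda-\|W\|)^{-1}\cdot \|W\|^3 \cdot \|e_i\|\|e_j\| = O_{hp}((N\vep)^{-2}(N\vep)^{3/2}) = O_{hp}((N\vep)^{-1/2})$, which is still too big — so one actually needs the sharper bound $|e_i^\prime W^m e_j| = O_{hp}((N\vep)^{(m-1)/2}\cdot\mathrm{(polylog)}/\sqrt{N})$ for the relevant $m$, coming from the martingale/concentration estimates in Section \ref{sec:est} that exploit the specific vectors $e_i,e_j$ rather than just $\|W\|$; (iii) the error from replacing $e_i^\prime v$ by $1$ and $\lambda$ by its leading order, each of which feeds a relative $O_p((N\vep)^{-1})$ into already-small terms.

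The main obstacle, then, is the bookkeeping of which powers of $N\vep$ and $\vep$ appear where, and in particular establishing the refined bilinear-form bounds $e_i^\prime W^m e_j = o_{hp}(\cdots)$ with enough room to beat the $(N\sqrt\vep)^{-1}$ threshold; this is exactly where Assumption E2.\ (as opposed to the weaker E1.) is used, and where the estimates of Section \ref{sec:est}, applied to the discretized eigenfunctions, do the real work. Once those bounds are in hand, the rest is algebraic rearrangement of the $k\times k$ eigen-relation, matching Theorem \ref{t.flac} term by term. I would close by noting that for $k=1$ the statement is vacuous (the index set $\{1,\ldots,k\}\setminus\{i\}$ is empty), consistent with the remark in the introduction that the linear-algebraic content here is genuinely a $k\ge2$ phenomenon.
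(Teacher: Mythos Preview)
Your approach matches the paper's: both isolate the off-$i$ coordinates of the relation $Vu=\mu u$ from Lemma \ref{proof.l1}, invert to obtain $\tilde u=u(i)\mu^{-1}(I_{k-1}-\mu^{-1}\tilde V)^{-1}\tilde V_i$, expand the entries $V(i,j)=N\vep\sqrt{\theta_i\theta_j}\sum_{n\ge0}\mu^{-n}e_i^\prime W^ne_j$, and retain $n=1,2$; the factor $(\theta_i-\theta_j)^{-1}$ comes exactly from the diagonal of $(I_{k-1}-\mu^{-1}\tilde V)^{-1}$ after showing $\mu^{-1}V\to\diag(\theta_1/\theta_i,\ldots,\theta_k/\theta_i)$ at rate $o_p(\sqrt\vep)$.

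Your error bookkeeping, however, is muddled and should be tightened. In your item (i), the variance computation is wrong and the side condition $\vep\ll N^{-1/3}$ you derive is spurious; the correct tool is the concentration estimate Lemma \ref{est.l3} with $n=2$, which gives $|e_i^\prime W^2e_j-\E(e_i^\prime W^2e_j)|=O_{hp}\bigl(N^{1/2}\vep(\log N)^{\xi/2}\bigr)$, and after the prefactor $N\vep\mu^{-2}\asymp(N\vep)^{-1}$ this is $o(\sqrt\vep)$ already under E1. In your item (ii), you correctly observe that the naive norm bound on the $n=3$ term is too crude; the fix is $\E(e_i^\prime W^3e_j)=O(N\vep)$ from Lemma \ref{est.l4} combined with Lemma \ref{est.l3}, yielding $N\vep\mu^{-3}e_i^\prime W^3e_j=O_p\bigl((N\vep)^{-1}\bigr)$, while for $n\ge4$ the operator-norm bound $\|W\|=O_{hp}(\sqrt{N\vep})$ does suffice and gives the same order. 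It is exactly the implication $(N\vep)^{-1}=o(\sqrt\vep)$ that consumes Assumption E2, used for the $n\ge3$ tail and for upgrading $\mu=N\vep\theta_i+O_p(1)$ (equation \eqref{proof.approx.eq5}) to $(N\vep\theta_i)^{-1}\mu=1+o_p(\sqrt\vep)$; the $n=0$ term $N\vep e_i^\prime e_j=O(\vep)=o(\sqrt\vep)$ uses F2 and the $\vep\to0$ half of E2. With these specific lemmas in place of your heuristic estimates, the argument closes exactly as in the paper.
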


\begin{remark}\label{rem1}
An immediate consequence of Theorem \ref{t.flac} is that under Assumption E2., there exists a deterministic sequence $(z_N:N\ge1)$ given by
\[
z=\frac1{(N\vep)^2\theta_i(\theta_i-\theta_j)}\E\left(e_i^\prime W^2e_j\right)\,,
\]
such that as $N\to\infty$,
\begin{equation}
\label{eq.defZij}Z_{ij}=N\sqrt\vep\left(e_j^\prime v-z\right)
\end{equation}
converges weakly to a normal distribution with mean zero, for all $i\in\mathcal I$ and $j\in\{1,\ldots,k\}\setminus\{i\}$. Furthermore, the convergence holds jointly for all $i$ and $j$ satisfying the above. This, along with \eqref{t.main.claim}, implies that the collection
\[
\left(Z_{ij}:i\in\mathcal I, j\in\{1,\ldots,k\}\setminus\{i\}\right)\cup\left(\vep^{-1/2}\left(\lambda_i(A)-\E\left[\lambda_i(A)\right]\right):i\in\mathcal I\right)
\]
converges weakly, as $N\to\infty$, to 
\[
\left(G_{ij}:i\in\mathcal I, j\in\{1,\ldots,k\}\setminus\{i\}\right)\cup\left(G_i:i\in\mathcal I\right)
\]
which is a zero mean Gaussian vector in $\bbr^{k|{\cal I}|}$. The covariance matrix of $(G_i)$ is as in \eqref{eq.cov}, and $\Cov(G_{ij},G_{i^\prime\,j^\prime})$ and $\Cov(G_{ij},G_{i^\prime})$ are not hard to calculate by proceeding as in \eqref{eq.cov.rhs}. 
\end{remark}

\section{Examples and special cases}\label{sec:eg}

\subsection*{The rank one case}
Let us consider the special case of $k=1$, that is,
\[
f(x,y)=\theta r(x)r(y)\,,
\]
for some $\theta>0$, and a bounded Riemann integrable $r:[0,1]\to[0,\infty)$ satisfying
\[
\int_0^1r(x)^2\,dx=1\,.
\]
In this case, Theorem \ref{t.main} implies that 
\[
\vep^{-1/2}\left(\lambda_1(A)-\E\left(\lambda_1(A)\right)\right)\Rightarrow G_1\,,
\]
as $N\to\infty$, where
\[
G_1\sim N\left(0\,,\sigma^2\right)\,,
\]
with
\[
\sigma^2=2\theta\left(\int_0^1r(x)^3\,dx\right)^2-2\theta^2\vep_\infty\left(\int_0^1r(x)^4\,dx\right)^2\,.
\]
If $r$ is Lipschitz and $\vep_\infty=0$, then the claim of Theorem \ref{t.mean} boils down to
\begin{equation}
\label{eg.eq1}\E\left[\lambda_1(A)\right]=\theta N\vep e_1^\prime e_1+(N\vep\theta)^{-1}\E\left(e_1^\prime W^2 e_1\right)+O\left(\sqrt\vep+(N\vep)^{-1}\right)\,,
\end{equation}
where
\[
e_1=N^{-1/2}\left[r(1/N)\,r(2/N)\,\ldots r(1)\right]^\prime\,.
\]
Lipschitz continuity of $r$ implies that
\[
e_1^\prime e_1=1+O\left(N^{-1}\right)\,,
\]
and hence \eqref{eg.eq1} becomes
\begin{equation}
\label{eg.eq2}
\E\left[\lambda_1(A)\right]=\theta N\vep+(N\vep\theta)^{-1}\E\left(e_1^\prime W^2 e_1\right)+O\left(\sqrt\vep+(N\vep)^{-1}\right)\,.
\end{equation}
Clearly,
\begin{align*}
&\E\left(e_1^\prime W^2 e_1\right)\\
=&\frac1N\sum_{i=1}^Nr\left(\frac iN\right)^2\E\left[W^2(i,i)\right]\\
=&\frac1N\sum_{i=1}^Nr\left(\frac iN\right)^2\sum_{1\le j\le N,\,j\neq i}\vep f\left(\frac iN,\frac jN\right)\left(1-\vep f\left(\frac iN,\frac jN\right)\right)\\
=&\theta\vep N^{-1}\sum_{1\le i\neq j\le N}r\left(\frac iN\right)^3r\left(\frac jN\right)+O\left(N^{-1}\vep^2\right)\\
=&N\theta\vep\int_0^1r(x)^3\,dx\int_0^1r(y)\,dy+O(\vep)\,.
\end{align*}
In conjunction with \eqref{eg.eq2} this yields
\[
\E\left[\lambda_1(A)\right]=\theta N\vep+\int_0^1r(x)^3\,dx\int_0^1r(y)\,dy+O\left(\sqrt\vep+(N\vep)^{-1}\right)\,.
\]

\subsection*{Stochastic block model}
Another important example is the stochastic block model, defined as follows. Suppose that
\[
f(x,y)=\sum_{i,j=1}^kp(i,j)\one_{B_i}(x)\one_{B_j}(y)\,,0\le x,y\le1\,,
\]
where $p$ is a $k\times k$ symmetric positive definite matrix, and $B_1,\ldots,B_k$ are disjoint Borel subsets of $[0,1]$ whose boundaries are sets of measure zero, that is, their indicators are Riemann integrable. We show below how to compute the eigenvalues and eigenfunctions of $I_f$, the integral operator associated with $f$.

Let $\beta_i$ denote the Lebesgue measure of $B_i$, which we assume without loss of generality to be strictly positive. Rewrite
\[
f(x,y)=\sum_{i,j=1}^k\tilde p(i,j)s_i(x)s_j(y)\,,
\]
where 
\[
\tilde p(i,j)=p(i,j)\sqrt{\beta_i\beta_j}\,,1\le i,j\le k\,,
\]
and
\[
s_i=\beta_i^{-1/2}\one_{B_i}\,,1\le i\le k\,.
\]
Thus, $\{s_1,\ldots,s_k\}$ is an orthonormal set in $L^2[0,1]$. Let
\[
\tilde p=U^\prime DU\,,
\]
be a spectral decomposition of $\tilde p$, where $U$ is a $k\times k$ orthogonal matrix, and
\[
D=\diag(\theta_1,\ldots,\theta_k)\,,
\]
for some $\theta_1\ge\ldots\ge\theta_k>0$. 

Define functions $r_1,\ldots,r_k$ by
\[
\left[
\begin{matrix}
r_1(x)\\
\vdots\\
r_k(x)
\end{matrix}
\right]
=U
\left[
\begin{matrix}
s_1(x)\\
\vdots\\
s_k(x)
\end{matrix}
\right],\,x\in[0,1]\,.
\]
It is easy to see that $r_1,\ldots,r_k$ are orthonormal in $L^2[0,1]$, and for $0\le x,y\le1,$
\begin{align*}
f(x,y)=&\left[s_1(x)\,\ldots\,s_k(x)\right]\tilde p\left[s_1(x)\,\ldots\,s_k(x)\right]^\prime\\
=&\left[r_1(x)\,\ldots\,r_k(x)\right]U\tilde p U^\prime\left[r_1(x)\,\ldots\,r_k(x)\right]^\prime\\
=&\sum_{i=1}^k\theta_ir_i(x)r_i(y)\,.
\end{align*}
Thus, $\theta_1,\ldots,\theta_k$ are the eigenvalues of $I_f$, and $r_1,\ldots,r_k$ are the corresponding eigenfunctions.

\section{Estimates}\label{sec:est}
In this section, we'll record a few estimates that will subsequently be used in the proof. Since their proofs are routine, they are being postponed to Section \ref{sec:app} which is the Appendix. Let $W$ be as defined in \eqref{eq.defw}. Throughout this section, Assumptions E1.\ and F1.\ will be in force. 

\begin{lemma}\label{est.l1}
There exist constants $C_1$, $C_2 > 0$ such that 
\begin{equation}\label{eq:est.l1.1}
    P \left( \| W \| \ge 2 \sqrt{MN\vep} + C_1 (N \vep )^{1/4} ( \log N)^{\xi/4}\right) \le e^{-C_2(\log N)^{\xi/4}},
\end{equation}
where $M=\sup_{0\le x,y\le 1}f(x,y)$. Consequently, 
\[
\|W\|=O_{hp}\left(\sqrt{N\vep}\right)\,.
\]
\end{lemma}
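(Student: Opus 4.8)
The plan is to bound $\|W\|$ using a standard combination of a truncation/decomposition step and a concentration inequality for the norm of a random matrix with independent entries above the diagonal. First I would write $W = W(i,j)$ with $W(i,j) = A(i,j) - \vep f(i/N,j/N)$, so that the entries are independent (on and above the diagonal), mean zero, bounded by $1$ in absolute value, and with variance $\Var(W(i,j)) = \vep f(i/N,j/N)(1-\vep f(i/N,j/N)) \le M\vep$. Thus $W$ is a symmetric random matrix whose entries are centered, uniformly bounded, and have variance at most $M\vep$. The ``spectral radius'' of the variance profile is therefore at most $MN\vep$, which explains the leading term $2\sqrt{MN\vep}$: it is the prediction of the semicircle heuristic $\|W\| \approx 2\sqrt{N \cdot (\text{max variance})}$.

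The main tool I would invoke is a non-asymptotic bound on $\|W\|$ of the type established for inhomogeneous Wigner-type matrices — e.g.\ the results of Bandeira--van Handel or Latala--van Handel, or alternatively the moment method / $\varepsilon$-net plus Bernstein argument as in Benaych-Georges--Bordenave--Knowles or Erd\H{o}s--Knowles--Yau--Yin. Concretely: cover the unit sphere by an $\varepsilon$-net $\mathcal N$ of cardinality $\le 9^N$, and for a fixed unit vector $x$ apply a Bernstein-type inequality to the quadratic form $x^\prime W x = \sum_{i\le j} (\text{terms})$, whose summands are bounded by $O(1)$ in sup-norm and have total variance $O(N\vep)$. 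This gives a Gaussian tail at scale $\sqrt{N\vep\, t}$ down to a sub-exponential correction at scale $t$; taking $t$ a suitable power of $\log N$ and union-bounding over the net produces, after controlling the net-discretization error (which costs at most a constant factor on $\|W\|$ and is absorbed into $C_1$), a bound of the form
\[
P\left(\|W\| \ge 2\sqrt{MN\vep} + C_1 (N\vep)^{1/4}(\log N)^{\xi/4}\right) \le e^{-C_2(\log N)^{\xi/4}}.
\]
The correction exponent $(N\vep)^{1/4}(\log N)^{\xi/4}$ is exactly what comes out of balancing the Gaussian regime against the sub-exponential regime in Bernstein's inequality at confidence level $(\log N)^{\xi/4}$, given that the entrywise variance is $\asymp\vep$ and the entries are $O(1)$.

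The consequence $\|W\| = O_{hp}(\sqrt{N\vep})$ follows immediately: under Assumption E1., $N\vep \gg (\log N)^\xi$, so $(N\vep)^{1/4}(\log N)^{\xi/4} = (N\vep)^{1/2}\big((\log N)^\xi/(N\vep)\big)^{1/4} = o(\sqrt{N\vep})$, whence $2\sqrt{MN\vep} + C_1(N\vep)^{1/4}(\log N)^{\xi/4} \le C\sqrt{N\vep}$ for large $N$; and the probability bound $e^{-C_2(\log N)^{\xi/4}}$ is of the form $e^{-(\log N)^\eta}$ with $\eta = \xi/4 > 2 > 1$, matching the definition of w.h.p.\ (here $\xi > 8$ is used). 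The step I expect to be the main obstacle is obtaining the sharp leading constant $2\sqrt{MN\vep}$ rather than some larger multiple of $\sqrt{N\vep}$: a crude $\varepsilon$-net argument only gives $\|W\| \le C\sqrt{N\vep}$ with a non-optimal $C$, and pinning the constant to $2\sqrt{M}$ requires either an inhomogeneous-matrix norm bound with the correct constant (Bandeira--van Handel-type) or a careful high-moment computation tracking the Catalan-number combinatorics; since only $O_{hp}(\sqrt{N\vep})$ is needed downstream, one could in principle settle for the weaker constant, but the stated inequality \eqref{eq:est.l1.1} claims the sharp one, so the proof in the Appendix will need the refined input.
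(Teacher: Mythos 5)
Your primary route (an $\varepsilon$-net over the sphere plus a Bernstein bound on each quadratic form) is not what the paper does, and as you yourself note it cannot deliver the stated inequality: it yields $\|W\|\le C\sqrt{N\vep}$ with a non-optimal constant, whereas \eqref{eq:est.l1.1} asserts the sharp leading term $2\sqrt{MN\vep}$. The paper's actual proof is the high-moment alternative you mention only in passing: it writes $\E(\|W\|^k)\le\E(\Tr(W^k))$ and imports from \cite[Section 4]{vu2007spec} the trace bound $\E(\Tr(W^k))\le K_1N(2\sqrt{\vep MN})^k$, valid for $k$ of order $(N\vep)^{1/4}$, then applies Markov's inequality at the threshold $2\sqrt{MN\vep}+C_1(N\vep)^{1/4}(\log N)^{\xi/4}$ and uses $(1-x)^k\le e^{-kx}$. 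In particular the correction exponent $(N\vep)^{1/4}(\log N)^{\xi/4}$ does not come from balancing the Gaussian against the sub-exponential regime of Bernstein, as you suggest; it comes from the constraint $k\asymp(N\vep)^{1/4}$ on how high a moment Vu's combinatorial estimate controls with the constant $2\sqrt{M}$, together with the requirement that $kx\gtrsim(\log N)^{\xi/4}$ in the Markov step. I would also caution that even for the weaker conclusion $\|W\|=O_{hp}(\sqrt{N\vep})$, the naive net-plus-Bernstein argument is delicate for sparse Bernoulli entries (the union bound over $9^N$ points is not beaten uniformly when the test vector is spiky), which is precisely why the references you cite resort to more elaborate decompositions; the trace method sidesteps this entirely. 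Your derivation of the ``consequently'' part from \eqref{eq:est.l1.1} is correct and matches the paper.
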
 

The notations $e_1$ and $e_2$ introduced in the next lemma and used in the subsequent lemmas should not be confused with $e_j$ defined in \eqref{eq.defej}. Continuing to suppress `$N$' in the subscript, let 
\begin{align*}
L=&[\log N]\,,
\end{align*}
where $[x]$ is the largest integer less than or equal to $x$. 

\begin{lemma}\label{est.l2}
There exists $0<C_1<\infty$ such that if $e_1$ and $e_2$ are $N\times1$ vectors with each entry in $[-1/\sqrt N,1/\sqrt N]$, then
\[
\left|\E\left(e_1^\prime W^ne_2\right)\right|\le (C_1N\vep)^{n/2}\,,2\le n\le L\,.
\]
\end{lemma}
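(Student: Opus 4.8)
The plan is to estimate $\E(e_1^\prime W^n e_2)$ by expanding the product into a sum over paths. Writing $W = (W(a,b))$, we have
\[
\E\left(e_1^\prime W^n e_2\right)=\sum_{a_0,a_1,\ldots,a_n} e_1(a_0)\,\E\bigl(W(a_0,a_1)W(a_1,a_2)\cdots W(a_{n-1},a_n)\bigr)\,e_2(a_n)\,,
\]
where each index ranges over $\{1,\ldots,N\}$. Since the entries of $W$ on and above the diagonal are independent and each has mean zero, a term survives only if the multigraph on $\{1,\ldots,N\}$ with edge multiset $\{\{a_0,a_1\},\ldots,\{a_{n-1},a_n\}\}$ has every edge appearing with multiplicity at least two. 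This is the standard moment-method combinatorial constraint, so I would first reduce to counting closed-walk-like structures with at most $\lfloor n/2\rfloor$ distinct vertices among $a_0,\ldots,a_n$ (apart from the boundary). Each entry is bounded, $|W(a,b)|\le 1$, and moreover $\E|W(a,b)|^m\le \E(W(a,b)^2)=\vep f(a/N,b/N)(1-\vep f(a/N,b/N))\le M\vep$ for every $m\ge 2$; so each surviving path contributes at most $(M\vep)^{\#\text{distinct edges}}$ in absolute value, and the number of distinct edges is at least the number of distinct interior vertices.

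The key quantitative steps are then: (i) bound $|e_1(a_0)|,|e_2(a_n)|\le N^{-1/2}$, which contributes a factor $N^{-1}$; (ii) for a path shape with $r$ distinct vertices, the number of ways to assign actual labels in $\{1,\ldots,N\}$ is at most $N^r$; (iii) the number of distinct vertices is at most $1+\lfloor n/2\rfloor$ once one accounts for the fact that $a_0$ is summed freely but weighted by $N^{-1/2}$, and similarly for $a_n$ — more carefully, one shows the total contribution is at most
\[
N^{-1}\cdot (\text{number of path shapes})\cdot N^{\,r_{\max}}\cdot(M\vep)^{\,e_{\min}}\,,
\]
and then checks that $N^{-1}N^{r_{\max}}(M\vep)^{e_{\min}}\le (CN\vep)^{n/2}$ for a suitable constant $C$ depending only on $M$ and the (bounded) number of path shapes of length $n\le L$. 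The number of shapes of a walk of length $n$ is at most $n^{n}$ or so (Catalan-type bound refined by the pairing structure), and since $n\le L=[\log N]$ this is at most $N^{o(1)}$, which is absorbed into the constant $C_1$ at the cost of enlarging it; this is exactly why the restriction $n\le L$ is imposed.

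The main obstacle I expect is bookkeeping the boundary vertices $a_0$ and $a_n$ correctly: unlike the trace case there is no closure $a_0=a_n$, so one must argue that the freedom to choose $a_0$ and $a_n$ is already paid for by the two factors of $N^{-1/2}$, and that the interior structure still forces at most $\lfloor n/2\rfloor$ additional distinct vertices beyond $\{a_0,a_n\}$. The cleanest way to do this is to introduce the spanning-tree / first-visit encoding of the walk: each time the walk steps to a vertex not previously visited costs a factor $N$ in the label count and must later be "paid back" by a repeated edge; counting first visits versus edge repetitions gives the inequality $(\text{distinct vertices})-1\le \lfloor n/2\rfloor$ for the interior, and combining with the $N^{-1}$ from the endpoints yields the exponent $n/2$. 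Once this inequality is in hand, the bound $(C_1 N\vep)^{n/2}$ follows by collecting constants, with $C_1$ chosen large enough to dominate $M$ times the (sub-$N^{o(1)}$) shape count uniformly over $2\le n\le L$.
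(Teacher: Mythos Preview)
Your combinatorial route is different from what the paper does, and your sketch has a quantitative gap that prevents it from yielding a constant $C_1$ independent of $N$.

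\textbf{What the paper does.} The paper avoids path combinatorics altogether. It splits the expectation over the event $A=\{\|W\|\le C\sqrt{N\vep}\}$ supplied by Lemma~\ref{est.l1}. On $A$ one simply uses $\|e_1\|,\|e_2\|\le1$ and $|e_1^\prime W^n e_2|\le\|W\|^n\le (CN\vep)^{n/2}$, which already gives the bound. On $A^c$ one applies Cauchy--Schwarz together with the crude bound $\E[(e_1^\prime W^n e_2)^2]\le N^{C^\prime n}$ and the stretched-exponential bound on $P(A^c)$; since $n\le L=[\log N]$ and $\xi>8$, the contribution from $A^c$ is negligible. So the full strength of the moment method is already packaged inside Lemma~\ref{est.l1}, and Lemma~\ref{est.l2} is a two-line corollary.

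\textbf{The gap in your sketch.} The step where you assert that the number of walk shapes is ``at most $n^n$ or so'' and that this is ``at most $N^{o(1)}$, which is absorbed into the constant $C_1$'' does not go through. First, with $n\le L=[\log N]$ one has $n^n\le(\log N)^{\log N}=N^{\log\log N}$, which is \emph{not} $N^{o(1)}$. Second, even a genuine $N^{o(1)}$ factor cannot be absorbed into a fixed $C_1$ uniformly over $2\le n\le L$: writing the extra factor as $F_N$, absorption into $(C_1)^{n/2}$ requires $C_1\ge F_N^{2/n}$, and for bounded $n$ this forces $C_1\to\infty$. Concretely, your bound would read $(\text{shape count})\cdot(MN\vep)^{n/2}$, and to conclude $(C_1N\vep)^{n/2}$ you need the shape count to be $\le C^{n}$ for a constant $C$, not merely subexponential in $N$.

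\textbf{How to repair it.} The combinatorial approach can be made to work, but not with a crude shape count: one must exploit the trade-off that shapes with many repeated (non-tree) edges have fewer distinct vertices and hence fewer factors of $N$, while tree-like shapes (which maximise the vertex count) are governed by Catalan numbers and thus number only $O(4^{n/2})$. Carrying this out carefully is exactly the F\"uredi--Koml\'os/Vu trace estimate that already underlies Lemma~\ref{est.l1}. So the cleanest fix is the paper's: use $\|W\|\le C\sqrt{N\vep}$ w.h.p.\ directly, rather than redoing its proof inside Lemma~\ref{est.l2}.
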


\begin{lemma}\label{est.l3}
There exists $\eta_1>1$ such that for $e_1,e_2$ as in Lemma \ref{est.l2}, it holds that
\begin{align}
&\max_{2\le n\le L}P\left(\left|e_1^\prime W^ne_2-\E\left(e_1^\prime W^ne_2\right)\right|>N^{(n-1)/2}\vep^{n/2}(\log N)^{n\xi/4}\right)\nonumber\\
=&O\left( e^{-(\log N)^{\eta_1}}\right)\,,\label{est.l3.1}
\end{align}
where $\xi$ is as in \eqref{eq.vep}. In addition,
\begin{equation}\label{est.l3.2}
e_1^\prime We_2=o_{hp}\left(N\vep\right)\,.
\end{equation}
\end{lemma}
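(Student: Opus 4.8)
\emph{Proof plan.} The second assertion \eqref{est.l3.2} is immediate: since all entries of $e_1,e_2$ lie in $[-1/\sqrt N,1/\sqrt N]$ we have $\|e_1\|,\|e_2\|\le1$, so $|e_1^\prime We_2|\le\|W\|=O_{hp}(\sqrt{N\vep})$ by Lemma \ref{est.l1}, and this is $o_{hp}(N\vep)$ because $N\vep\to\infty$. All the work goes into \eqref{est.l3.1}, which I would prove by the moment method.

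Fix $2\le n\le L$ and set $Y=e_1^\prime W^ne_2=\sum_{w}e_1(w_0)e_2(w_n)\prod_{t=1}^nW(w_{t-1},w_t)$, the sum running over walks $w=(w_0,\dots,w_n)$ in $\{1,\dots,N\}$; thus $Y$ is a polynomial of degree $n$ in the independent, mean-zero, $[-1,1]$-valued entries $\{W(a,b):a\le b\}$, each of variance at most $M\vep$ (recall $\vep\le M^{-1}$). For an integer $m\ge1$ I would expand $\E[(Y-\E Y)^{2m}]$ over $2m$-tuples of walks: independence and $\E W(a,b)=0$ force a tuple to contribute only when every edge in the combined edge-multiset of the $2m$ walks is used at least twice, and centering removes the ``purely mean'' parts, so the dominant tuples are those in which the walks split into $m$ pairs, each pair having its $2n$ edge-slots matched into $n$ distinct edges forming a tree. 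Counting such configurations --- at most $N^{mn+m}$ vertex labellings, at most $(Cmn)^{mn}$ shapes-and-matchings with $C=C(M)$, each distinct edge contributing its variance $\le M\vep$, the $4m$ endpoint weights contributing $\le N^{-2m}$, and configurations with a higher-multiplicity edge or a cycle smaller by a factor $\asymp(MN\vep)^{-1}\to0$ --- yields
\[
\E\bigl[(Y-\E Y)^{2m}\bigr]\le(Cmn)^{mn}(N\vep)^{mn}N^{-m}\,,\qquad 2\le n\le L,
\]
with $C$ independent of $n$ and $m$. Taking $2m$-th roots, the left side is at most $(Cmn)^{n/2}(N\vep)^{n/2}N^{-1/2}$, to be compared with the threshold $(N\vep)^{n/2}N^{-1/2}(\log N)^{n\xi/4}$ appearing in \eqref{est.l3.1}.

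To close the argument, note that since $\xi>8$ the interval $(1,\xi/2-1)$ is non-empty; I would fix $\eta_2$ in it and put $m=[(\log N)^{\eta_2}]$. Then for all $2\le n\le L=[\log N]$ and $N$ large one has $Cmn\le Cm\log N\le\tfrac12(\log N)^{\xi/2}$, hence $(Cmn)^{n/2}\le\tfrac12(\log N)^{n\xi/4}$, so the $2m$-th root above is at most half the threshold. Markov's inequality then bounds the probability in \eqref{est.l3.1}, for each fixed $n\in\{2,\dots,L\}$, by $2^{-2m}=e^{-\Theta((\log N)^{\eta_2})}$, uniformly in $n$ and in the choice of $e_1,e_2$; since \eqref{est.l3.1} only involves the maximum of these probabilities over $n$, this gives \eqref{est.l3.1} with any $\eta_1\in(1,\eta_2)$.

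The main obstacle is the displayed moment bound: one must verify that centering genuinely buys the extra factor $N^{-m}$ relative to the naive size $(\E Y)^{2m}\asymp(N\vep)^{mn}$ --- the cheap bound $\E|Y-\E Y|^{2m}\le2^{2m}\E|Y|^{2m}$ loses precisely this factor and then fails for small $n$ --- and that the combinatorial pre-factor obeys $(Cmn)^{mn}$ with $C$ free of $n$, uniformly up to $n=L$. This is the familiar graphical expansion underlying the Wigner moment method, here with a variance profile bounded by $M\vep$ and the additional endpoint weights $|e_i(\cdot)|\le N^{-1/2}$, the only delicate point being the parity/connectivity bookkeeping that isolates the leading pair-tree configurations. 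As an alternative one could invoke a black-box concentration inequality for low-degree polynomials of independent bounded variables, whose inputs are expectations of partial derivatives of $Y$ --- matrix elements of powers of $W$ of exactly the type bounded, with the correct powers of $\vep$, in Lemma \ref{est.l2}.
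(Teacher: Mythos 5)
Your treatment of \eqref{est.l3.1} is essentially the paper's own argument: both rest on a high-moment bound of the shape $\E\left[(Y-\E Y)^{p}\right]\le (Cnp)^{O(np)}(N\vep)^{np/2}N^{-p/2}$ followed by Markov's inequality with a moment order growing like a power of $\log N$. The paper obtains the moment bound by splitting $W$ into its upper- and lower-triangular parts and imitating Lemma~6.5 of \cite{Erdos1}, whereas you sketch the graphical (pair-tree) expansion directly in the independent upper-triangular entries; the paper then optimizes $p\asymp(\log N)^{\eta}/(2Cn)$ with $\eta\in(1,\xi/4)$, while you take $2m\asymp(\log N)^{\eta_2}$ uniformly in $n$ --- both parameter choices close the argument because $\xi>8$ (note that if the combinatorial prefactor comes out as $(Cnp)^{np}$ rather than your $(Cmn)^{mn}$, your constraint tightens to $\eta_2<\xi/4-1$, which is still a non-empty interval). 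Neither you nor the paper writes out the connectivity/parity bookkeeping in full, so the obstacle you honestly flag is exactly the step the paper outsources to \cite{Erdos1}. The one genuine divergence is in \eqref{est.l3.2}: the paper applies Hoeffding's inequality to the independent summands $A(k,l)e_1(k)e_2(l)$, while you simply bound $|e_1^\prime We_2|\le\|W\|=O_{hp}(\sqrt{N\vep})=o_{hp}(N\vep)$ via Lemma \ref{est.l1}. Your route is shorter and fully adequate for the stated claim; the Hoeffding route yields a sharper tail but is not needed here.
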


\begin{lemma}\label{est.l4}
If $e_1,e_2$ are as in Lemma \ref{est.l2}, then
\begin{equation}
\label{est.l4.eq1}\Var\left(e_1^\prime We_2\right)=O(\vep)\,,
\end{equation}
and
\begin{equation}
\label{est.l4.eq2}\E\left(e_1^\prime W^3e_2\right)=O(N\vep)\,.
\end{equation}
\end{lemma}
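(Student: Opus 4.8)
The plan is to estimate the variance and the third moment by expanding into sums over entries of $W$ and using the variance profile together with the crude moment bounds already available. For \eqref{est.l4.eq1}, write $e_1^\prime We_2=\sum_{i,j}e_1(i)W(i,j)e_2(j)$ and note that, since $W$ is symmetric with independent zero-mean entries on and above the diagonal, the covariance of two such terms vanishes unless they involve the same unordered pair $\{i,j\}$. Hence $\Var(e_1^\prime We_2)$ is, up to a bounded combinatorial factor, $\sum_{i\le j}e_1(i)^2e_2(j)^2\Var(W(i,j))$ plus the symmetric term with $i,j$ swapped; using $|e_1(i)|,|e_2(j)|\le N^{-1/2}$ and $\Var(W(i,j))=\vep f(i/N,j/N)(1-\vep f(i/N,j/N))\le M\vep$, this is bounded by a constant times $N^{-2}\cdot N^2\cdot\vep=O(\vep)$, which is exactly \eqref{est.l4.eq1}. (This also follows from Lemma \ref{est.l2} with $n=2$ only after subtracting the mean, so one does need the independence/zero-mean structure here rather than just the moment bound.)

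For \eqref{est.l4.eq2}, expand $\E(e_1^\prime W^3e_2)=\sum_{i,j,l,m}e_1(i)e_2(m)\E[W(i,j)W(j,l)W(l,m)]$. By independence of the entries, a term survives only if every entry-pair appearing in the product occurs an even number of times; with three factors this forces all three pairs $\{i,j\},\{j,l\},\{l,m\}$ to coincide, so up to relabelling one is left with terms of the form $e_1(i)e_2(j)\E[W(i,j)^3]$ with $i\in\{i,j\}$-type constraints, i.e.\ the surviving index configurations are $i=l,\ j=m$ (giving $\E[W(i,j)^2W(j,i)]=\E[W(i,j)^3]$) together with the diagonal coincidences. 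Since $|\E[W(i,j)^3]|\le\E|W(i,j)|^3\le \E[W(i,j)^2]\le M\vep$ (the entries being bounded by $1$ in absolute value after centering, so $|W|\le1$ and $|W|^3\le W^2$), and there are $O(N^2)$ such configurations each weighted by $|e_1(i)e_2(j)|\le N^{-1}$, the total is $O(N^2\cdot N^{-1}\cdot N^{-1}\cdot N\vep)=O(N\vep)$. Here the extra factor $N$ compared to the variance computation comes from the free index that is summed without an accompanying $e$-weight, exactly as in the standard moment method for Wigner-type matrices.

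The routine but slightly delicate bookkeeping — and the step I would be most careful about — is the combinatorial enumeration of which index patterns survive in the third-moment expansion and verifying that the diagonal and near-diagonal coincidences (e.g.\ $i=j$, or two of $i,j,l,m$ equal) do not produce a larger contribution; each such degeneracy removes a free summation index but is compensated by the loss of an $N^{-1/2}$ from fewer $e$-factors only in a balanced way, and one checks they all stay $O(N\vep)$. The cleanest way to organize this is to bound $|\E(e_1^\prime W^3e_2)|$ by $\|e_1\|_\infty\|e_2\|_\infty$ times the sum over all index tuples of $|\E[W(i,j)W(j,l)W(l,m)]|$, use $|\E[W(i,j)W(j,l)W(l,m)]|\le C\vep\,\one(\text{pairs match})$, and count matchings; alternatively one may simply invoke the bound from Lemma \ref{est.l2} at $n=3$ to get $O((N\vep)^{3/2})$ and then sharpen it to $O(N\vep)$ using that each term carries a factor $\E[W(i,j)^3]=O(\vep)$ rather than $O(\sqrt{N\vep}\,)^{3}/N^{?}$ — the point being that odd moments of bounded centered Bernoullis are $O(\vep)$, not $O(\vep^{3/2})$, which is what gains the improvement. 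Full details of both computations are deferred to Section \ref{sec:app}.
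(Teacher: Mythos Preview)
Your proposal is correct and is precisely the ``simple moment calculation'' that the paper's own proof consists of (the paper gives no further detail beyond that phrase). One small slip to fix: the survival condition in the third-moment expansion is that no unordered pair appear exactly once, not that every pair appear an \emph{even} number of times --- the dominant surviving term $\E[W(i,j)^3]$ has an odd count --- but your conclusion that all three pairs must coincide (forcing $i=l$, $j=m$ up to diagonals) and the resulting bound $O(N^2)\cdot N^{-1}\cdot O(\vep)=O(N\vep)$ are correct.
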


\section{Proof of the main results}\label{sec:proof} 
This section is devoted to the proof of the main results. The section is split into several subsections, each containing the proof of one main result, for the ease of reading. Unless mentioned otherwise, Assumptions E1.\ and F1.\ are made.

\subsection{Proof of Theorem \ref{t.inprob}}
We start with showing that Theorem \ref{t.inprob} is a corollary of Lemma \ref{est.l1}. At this point, it should be clarified that throughout this section, $e_j$ will always be as defined in \eqref{eq.defej}.

\begin{proof}[Proof of Theorem \ref{t.inprob}]
For a fixed $i\in\{1,\ldots,k\}$, it follows that
\begin{align*}
\left|\lambda_i(A)-\lambda_i\left(\E(A)\right)\right|\le\|W\|=O_{hp}\left((N\vep)^{1/2}\right)\,,
\end{align*}
by Lemma \ref{est.l1}. In order to complete the proof, it suffices to show that
\[
\lim_{N\to\infty}(N\vep)^{-1}\lambda_i\left(\E(A)\right)=\theta_i\,,
\]
which however follows from the observation that \eqref{eq.decomp} implies that
\begin{equation}
\label{t.inprob.eq1}\E(A)=N\vep\sum_{j=1}^k\theta_je_je_j^\prime\,.
\end{equation}
This completes the proof. 
\end{proof}

\subsection{Proof of Theorem \ref{t.secorder}}
Proceeding towards the proof of Theorem \ref{t.secorder}, let us fix $i\in\cal I$, once and for all, denote
\[
\mu=\lambda_i(A)\,,
\]
and let $V$ be a $k\times k$ real symmetric matrix, depending on $N$ which is suppressed in the notation, defined by
\[
V(j,l)=
\begin{cases}
N\vep\sqrt{\theta_j\theta_l}\,e_j^\prime\left(I-\frac1\mu W\right)^{-1}e_l,&\text{if }\|W\|<\mu\,,\\
0,&\text{else}\,,
\end{cases}
\]
for all $1\le j,l\le k$. It should be noted that if $\|W\|<\mu$, then $I-W/\mu$ is invertible. 

The proof of Theorem \ref{t.secorder}, which is the main content of this paper, involves several steps, and hence it is imperative to sketch the outline of the proof for the reader's convenience, which is done below.
\begin{enumerate}
\item The first major step of the proof is to show that w.h.p., $\mu$ exactly equals $\lambda_i(V)$. This is done in Lemma \ref{proof.l1}. When $i=k=1$, the matrix $V$ is a scalar and hence in that case the equation boils down to $\mu=V$ which is a consequence of the resolvent equation. For higher values of $k$, the Gershgorin circle theorem is employed for the desired claim. Therefore, this step is a novelty of the given proof.
\item The next step in the proof is to write $\mu$ as the solution of an equation of the form
\[
\mu=\lambda_i\left(\sum_{n=0}^L\mu^{-n}Y_n\right)+Error\,,
\]
for suitable matrices $Y_1,Y_2,\ldots$. This is done in Lemma \ref{proof.l2}.
\item The third step is to replace $Y_n$ by $\E(Y_n)$ in the equation obtained in the above step, for $n\ge2$. This is done in Lemma \ref{proof.l3}.
\item Arguably the most important step in the proof is to obtain an equation of the form
\[
\mu=\bar\mu+\mu^{-1}\zeta+Error\,,
\]
for some deterministic $\bar\mu$ depending on $N$ and random $\zeta$. Once again, this is achieved from the previous step with the help of the Gershgorin circle theorem and other linear algebraic tools. This is done in Lemma \ref{proof.l6}. 
\item The final step of the proof is to show that $\bar\mu$ of the above step can be replaced by $\E(\mu)$. 
\end{enumerate}

Now let us proceed towards executing the above steps for proving Theorem \ref{t.secorder}. As the zeroth step, we show that $V/N\vep$ converges to $\diag(\theta_1,\ldots,\theta_k)$, that is, the $k\times k$ diagonal matrix with diagonal entries $\theta_1,\ldots,\theta_k$, w.h.p.

\begin{lemma}\label{proof.l0}
As $N\to\infty$,
\[
V(j,l)=N\vep\theta_j\left(\one(j=l)+o_{hp}(1)\right)\,,1\le j,l\le k\,.
\]
\end{lemma}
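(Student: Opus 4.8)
The plan is to prove Lemma \ref{proof.l0} by expanding $(I - W/\mu)^{-1}$ into a Neumann series and controlling each term using the estimates of Section \ref{sec:est}. First I would note that by Theorem \ref{t.inprob}, $\mu = \lambda_i(A) = N\vep\theta_i(1 + o_{hp}(1))$, and in particular w.h.p.\ $\mu \geq c N\vep$ for some $c > 0$; combined with Lemma \ref{est.l1}, which gives $\|W\| = O_{hp}(\sqrt{N\vep})$, we obtain $\|W\|/\mu = O_{hp}((N\vep)^{-1/2}) = o_{hp}(1)$ (using Assumption E1.). Hence w.h.p.\ the operator norm $\|W/\mu\| < 1/2$, say, so $I - W/\mu$ is invertible and
\[
\left(I - \frac{1}{\mu}W\right)^{-1} = \sum_{n=0}^{\infty} \mu^{-n} W^n\,,
\]
the series converging absolutely w.h.p.

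Next I would split the series at $n = L = [\log N]$ and handle the tail and the head separately. For the tail, on the event $\{\|W/\mu\| < 1/2\}$ one has $\|\sum_{n > L} \mu^{-n}W^n\| \leq 2^{-L} = N^{-\log 2}$, and since each $|e_j(m)| \leq N^{-1/2}$ we get $|e_j^\prime(\sum_{n>L}\mu^{-n}W^n)e_l| \leq \|\sum_{n>L}\mu^{-n}W^n\| = o_{hp}(1)$, which after multiplying by $N\vep$ is $o_{hp}(N\vep)$ as required (here I use that $N\vep$ is at most polynomial in $N$, which follows from $\vep \leq M^{-1}$ and so $N\vep \le N/M$). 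For the $n = 0$ term, $N\vep\sqrt{\theta_j\theta_l}\,e_j^\prime e_l = N\vep\sqrt{\theta_j\theta_l}\,\langle e_j, e_l\rangle$; since $e_j$ is the discretization of the orthonormal Riemann-integrable function $r_j$, the Riemann-sum convergence gives $e_j^\prime e_l = \langle r_j, r_l\rangle_{L^2} + o(1) = \one(j = l) + o(1)$, so this term contributes $N\vep\theta_j(\one(j=l) + o(1))$, which is deterministic. For $1 \leq n \leq L$, I would use Lemma \ref{est.l3}: for $n = 1$, \eqref{est.l3.2} gives $e_j^\prime W e_l = o_{hp}(N\vep)$, so $\mu^{-1} e_j^\prime W e_l = o_{hp}(1)$ (since $\mu \asymp N\vep$ w.h.p.); for $2 \leq n \leq L$, combining Lemma \ref{est.l2} (the bound $|\E(e_j^\prime W^n e_l)| \leq (C_1 N\vep)^{n/2}$) with the concentration estimate \eqref{est.l3.1} gives $|e_j^\prime W^n e_l| \leq (C_1 N\vep)^{n/2} + N^{(n-1)/2}\vep^{n/2}(\log N)^{n\xi/4}$ w.h.p., hence $\mu^{-n}|e_j^\prime W^n e_l| \lesssim (C_1 N\vep / \mu^2)^{n/2} + \mu^{-n}N^{(n-1)/2}\vep^{n/2}(\log N)^{n\xi/4}$ w.h.p.; since $\mu \asymp N\vep$ and $N\vep \gg (\log N)^\xi$, each such term is bounded w.h.p.\ by $(C'/\sqrt{N\vep})^{n} \cdot C''$ up to constants, and summing the geometric series over $2 \leq n \leq L$ yields a total contribution that is $O_{hp}((N\vep)^{-1}) = o_{hp}(1)$. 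Multiplying the sum of the $n \geq 1$ contributions by $N\vep\sqrt{\theta_j\theta_l}$ then gives $o_{hp}(N\vep)$.

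Putting the pieces together: w.h.p., $V(j,l) = N\vep\sqrt{\theta_j\theta_l}\,[\,\one(j=l) + o(1) + o_{hp}(1)\,] = N\vep\theta_j\,\one(j=l) + o_{hp}(N\vep)$, which is the claim; on the complementary (small-probability) event $\{\|W\| \geq \mu\}$, $V$ is defined to be $0$, but that event has probability $O(e^{-(\log N)^\eta})$ and so does not affect any w.h.p.\ statement. The main obstacle — though a mild one — is bookkeeping the uniformity in $n$: one must be careful that the concentration bound in Lemma \ref{est.l3} is stated with a deterministic exceptional probability that is $O(e^{-(\log N)^{\eta_1}})$ \emph{uniformly} over $2 \leq n \leq L$ (it is, by the max in \eqref{est.l3.1}), so that the union over the $L$ values of $n$ and over the finitely many pairs $(j,l)$ still leaves an exceptional probability of the form $L \cdot O(e^{-(\log N)^{\eta_1}}) = O(e^{-(\log N)^{\eta}})$ for a slightly smaller $\eta > 1$, keeping everything within the w.h.p.\ framework. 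The only other point requiring a little care is that all the $O_{hp}$ and $o_{hp}$ estimates involving $\mu$ in the denominator are justified on the w.h.p.\ event where $\mu \in [c N\vep, C N\vep]$, which is exactly where $V$ is defined by its nontrivial formula.
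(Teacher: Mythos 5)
Your argument is correct, but it takes a more heavily armed route than the paper does for this particular lemma. The paper's proof is essentially two lines: on the w.h.p.\ event $\|W\|<\mu/2$ one has $\left\|\left(I-\mu^{-1}W\right)^{-1}-I\right\|\le 2\mu^{-1}\|W\|$, so $(I-\mu^{-1}W)^{-1}=I+O_{hp}(\mu^{-1}\|W\|)$, and then $e_j^\prime\left(I-\mu^{-1}W\right)^{-1}e_l=e_j^\prime e_l+O_{hp}(\mu^{-1}\|W\|)=\one(j=l)+o_{hp}(1)$, using only Lemma \ref{est.l1} and Theorem \ref{t.inprob} (exactly as in your first paragraph). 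Because the lemma only asks for an $o_{hp}(1)$ relative error, nothing finer than the crude operator-norm bound $\mu^{-1}\|W\|=O_{hp}((N\vep)^{-1/2})$ is needed, and in particular Lemmas \ref{est.l2} and \ref{est.l3} are not invoked here. Your full Neumann-series expansion, truncation at $L=[\log N]$, and term-by-term control via Lemmas \ref{est.l2} and \ref{est.l3} is valid (your uniformity bookkeeping over $2\le n\le L$ is handled correctly, and the geometric summation does give $o_{hp}(1)$), but it essentially reproduces, at a cruder precision, the machinery that the paper deploys later in Lemmas \ref{proof.l2} and \ref{proof.l3}, where the error must be tracked down to $o_{hp}(\sqrt\vep)$ rather than $o_{hp}(N\vep)$. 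In short: both proofs work; the paper's buys brevity by exploiting that only a first-order resolvent expansion is required at this stage, while yours buys nothing extra here but would be the right template for the sharper estimates needed downstream.
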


\begin{proof}
For fixed $1\le j,l\le k$, writing
\[
\left(I-\frac1\mu W\right)^{-1}=I+O_{hp}\left(\mu^{-1}\|W\|\right)\,,
\]
we get that
\[
V(j,l)=N\vep\sqrt{\theta_j\theta_l}\left(e_j^\prime e_l+\frac1\mu O_{hp}(\|W\|)\right)\,.
\]
Since
\begin{equation}
\label{proof.l0.eq1}\lim_{N\to\infty}e_j^\prime e_l=\one(j=l)\,,
\end{equation}
and
\[
\|W\|=o_{hp}(\mu)
\]
by Lemma \ref{est.l1} and Theorem \ref{t.inprob}, the proof follows.
\end{proof}

The next step, which is one of the main steps in the proof of Theorem \ref{t.secorder}, shows that the $i$-th eigenvalues of $A$ and $V$ are exactly equal w.h.p.

\begin{lemma}\label{proof.l1}
With high probability, 
\[
\mu=\lambda_i(V)\,.
\]
\end{lemma}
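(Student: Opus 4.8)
\textbf{Proof proposal for Lemma \ref{proof.l1}.}

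The plan is to show that $\mu=\lambda_i(A)$ satisfies a determinantal identity that forces it to coincide with an eigenvalue of $V$, and then to use Lemma \ref{proof.l0} together with the Gershgorin circle theorem to pin down \emph{which} eigenvalue of $V$ it is. First I would work on the event $\{\|W\|<\mu\}$, which holds w.h.p.\ by Lemma \ref{est.l1} and Theorem \ref{t.inprob}; on this event $I-W/\mu$ is invertible, so $\mu I-W$ is invertible and $\mu$ is not an eigenvalue of $W$. Using \eqref{t.inprob.eq1}, write $A=W+N\vep\sum_{j=1}^k\theta_je_je_j^\prime = W + E\Theta E^\prime$, where $E=[\,\sqrt{\theta_1}\,e_1\mid\cdots\mid\sqrt{\theta_k}\,e_k\,]$ is $N\times k$ and $\Theta=N\vep\,\diag(\theta_1^{-1},\ldots,\theta_k^{-1})\cdot\Theta'$; more cleanly, set $E = [e_1\mid\cdots\mid e_k]$ and $D=N\vep\,\diag(\theta_1,\ldots,\theta_k)$, so $\E(A)=EDE^\prime$ and $A=W+EDE^\prime$. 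Since $\mu$ is not an eigenvalue of $W$, the Weinstein--Aronszajn (matrix determinant lemma) identity gives
\[
\det(\mu I_N-A)=\det(\mu I_N-W)\,\det\!\left(I_k-DE^\prime(\mu I_N-W)^{-1}E\right)\,.
\]
Because $\mu$ \emph{is} an eigenvalue of $A$, the left side vanishes, and as $\det(\mu I_N-W)\ne0$ we conclude $\det(I_k-DE^\prime(\mu I_N-W)^{-1}E)=0$. Now $E^\prime(\mu I_N-W)^{-1}E=\mu^{-1}E^\prime(I-W/\mu)^{-1}E$, and one checks directly from the definition of $V$ that $D^{1/2}\big(\mu^{-1}E^\prime(I-W/\mu)^{-1}E\big)D^{1/2}=\mu^{-1}V$ entrywise, since $V(j,l)=N\vep\sqrt{\theta_j\theta_l}\,e_j^\prime(I-W/\mu)^{-1}e_l$. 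Hence $\det(I_k-\mu^{-1}V)=0$, i.e.\ $\mu$ is an eigenvalue of $V$. (One small point to handle: the matrix $D^{1/2}E^\prime(\cdots)ED^{1/2}$ and $DE^\prime(\cdots)E$ have the same determinant of $I_k$ minus them, since they are similar.)

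It remains to show $\mu=\lambda_i(V)$ rather than $\lambda_j(V)$ for some $j\ne i$. Here I would invoke Lemma \ref{proof.l0}: w.h.p.\ $V=N\vep\,\diag(\theta_1,\ldots,\theta_k)+N\vep\cdot o_{hp}(1)$ entrywise, so by the Gershgorin circle theorem each eigenvalue of $V$ lies within $N\vep\cdot o_{hp}(1)$ of some $N\vep\theta_j$, and since the $\theta_j$ for $j\in\mathcal I$ are separated (and more generally the distinct values among $\theta_1,\ldots,\theta_k$ are separated by a fixed gap), the ordered eigenvalues satisfy $\lambda_j(V)=N\vep\theta_j(1+o_{hp}(1))$ for every $j$. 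In particular $\lambda_i(V)=N\vep\theta_i(1+o_{hp}(1))$. On the other hand, Theorem \ref{t.inprob} gives $\mu=\lambda_i(A)=N\vep\theta_i(1+o_{hp}(1))$. Since $i\in\mathcal I$, the value $N\vep\theta_i$ is separated from all $N\vep\theta_j$, $j\ne i$, by a quantity of order $N\vep$, whereas the other eigenvalues $\lambda_j(V)$, $j\ne i$, are each within $o_{hp}(N\vep)$ of their respective $N\vep\theta_j\ne N\vep\theta_i$. Therefore w.h.p.\ $\mu$ is closer to $\lambda_i(V)$ than to any other eigenvalue of $V$; combined with the fact (just proved) that $\mu$ \emph{is} one of the eigenvalues of $V$, this forces $\mu=\lambda_i(V)$ w.h.p.

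The main obstacle I anticipate is not the determinantal identity, which is routine linear algebra, but making the ``which eigenvalue'' argument airtight when $\theta_i$ has neighbours $\theta_{i-1}$ or $\theta_{i+1}$ equal to each other or when several $\theta_j$ coincide: one must be careful that Gershgorin only localizes the \emph{multiset} of eigenvalues of $V$, and that the separation used is the fixed spectral gap around $\theta_i$ guaranteed by $i\in\mathcal I$. A clean way to organize this is: (i) establish $\mu\in\mathrm{spec}(V)$ w.h.p.; (ii) establish $\min_{j}|\,\mu-\lambda_j(V)\,|$ is attained only at $j=i$ w.h.p., using Theorem \ref{t.inprob}, Lemma \ref{proof.l0}, Gershgorin, and the isolation $\theta_{i-1}>\theta_i>\theta_{i+1}$; (iii) conclude. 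All the probabilistic content is absorbed into the w.h.p.\ statements of Lemma \ref{est.l1}, Theorem \ref{t.inprob}, and Lemma \ref{proof.l0}, so the argument here is deterministic on a good event.
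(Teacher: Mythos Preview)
Your proposal is correct and reaches the same conclusion as the paper, but the route you take for the first half is genuinely different.

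For the claim ``$\mu\in\mathrm{spec}(V)$ w.h.p.'', the paper does \emph{not} use the Weinstein--Aronszajn/matrix determinant lemma. Instead it takes a unit eigenvector $v$ of $A$ for $\mu$, rewrites $\mu v=Wv+N\vep\sum_l\theta_l(e_l^\prime v)e_l$, inverts $\mu I-W$, and premultiplies by $\sqrt{\theta_j}\,\mu e_j^\prime$ to obtain $Vu=\mu u$ for the explicit $k$-vector $u=\bigl[\sqrt{\theta_1}(e_1^\prime v),\ldots,\sqrt{\theta_k}(e_k^\prime v)\bigr]^\prime$. The extra work in the paper is then to check $u\neq0$, which is done by premultiplying the eigenvector equation by $v^\prime$ to get $\|u\|^2=\theta_i+o_{hp}(1)$. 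Your determinantal argument sidesteps this non-nullity check entirely and is arguably cleaner for the present lemma: once $\det(\mu I-W)\neq0$, the identity $\det(I_k-\mu^{-1}V)=0$ is immediate (your similarity $DE^\prime(\mu I-W)^{-1}E=\mu^{-1}\Lambda V\Lambda^{-1}$ with $\Lambda=\diag(\sqrt{\theta_1},\ldots,\sqrt{\theta_k})$ is correct). The trade-off is that the paper's explicit eigenvector $u$ is not a throwaway: the relations $Vu=\mu u$ and the formula for $u$ (equations \eqref{proof.l1.eqnew1}--\eqref{proof.l1.eqnew2}) are invoked again verbatim in the proof of Theorem~\ref{t.flac}, so the eigenvector construction pays dividends later. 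If you go with your determinantal proof here, you would need to recover $u$ separately when you reach Theorem~\ref{t.flac}.

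For the second half (identifying $\mu$ as the $i$-th eigenvalue), your plan and the paper's coincide: both use Lemma~\ref{proof.l0} and the Gershgorin isolation around $N\vep\theta_i$ granted by $i\in\mathcal I$. One cosmetic remark: where you write ``by Gershgorin the ordered eigenvalues satisfy $\lambda_j(V)=N\vep\theta_j(1+o_{hp}(1))$ for every $j$'', what is really being used is Weyl's inequality (or equivalently continuity of ordered eigenvalues) applied to $\|V-N\vep\,\diag(\theta_1,\ldots,\theta_k)\|=o_{hp}(N\vep)$; Gershgorin alone localizes eigenvalues in a union of disks without ordering them. This does not affect correctness, and you already flag the issue in your final paragraph.
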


The proof of the above lemma is based on the following fact which is a direct consequence of the Gershgorin circle theorem; see Theorem 1.6, pg 8 of \cite{Varga:book}. 

\begin{fact}\label{f.gct} Suppose that $U$ is an $n\times n$ real symmetric matrix. Define
\[
R_l=\sum_{1\le j\le n,\,j\neq l}|U(j,l)|\,,\,1\le l\le n\,.
\]
If for some $1\le m\le n$ it holds that
\begin{equation}
\label{f.gct.eq1}U(m,m)+R_m<U(l,l)-R_l\,,\text{ for all }1\le l\le m-1\,,
\end{equation}
and
\begin{equation}
\label{f.gct.eq2}U(m,m)-R_m>U(l,l)+R_l\,,\text{ for all }m+1\le l\le n\,,
\end{equation}
then 
\[
\bigl\{\lambda_1(U),\ldots,\lambda_n(U)\bigr\}\setminus\left(\bigcup_{1\le l\le k,\,l\neq m}\left[U(l,l)-R_l,U(l,l)+R_l\right]\right)=\bigl\{\lambda_m(U)\bigr\}\,.
\]
\end{fact}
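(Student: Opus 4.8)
The plan is to obtain Fact~\ref{f.gct} from the connected-component refinement of the Gershgorin circle theorem, the remaining work being the bookkeeping that pins down \emph{which} eigenvalue is the one isolated by the hypotheses. Since $U$ is real symmetric, all $\lambda_j(U)$ are real and the Gershgorin ``disks'' are the closed intervals $D_l=[U(l,l)-R_l,\,U(l,l)+R_l]$, $1\le l\le n$. I will use two facts, both standard consequences of the Gershgorin theorem (the second one obtained by deforming $U$ to its diagonal part along $t\mapsto\mathrm{diag}(U)+t(U-\mathrm{diag}(U))$ and using continuity of eigenvalues; see \cite{Varga:book}): (i) every eigenvalue of $U$ lies in $\bigcup_{l=1}^nD_l$; and (ii) if a sub-collection of the $D_l$ has union equal to a connected component of $\bigcup_lD_l$ that is disjoint from the remaining intervals, then that component contains exactly as many eigenvalues of $U$, counted with multiplicity, as there are intervals in the sub-collection.

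First I would check that \eqref{f.gct.eq1}--\eqref{f.gct.eq2} make $D_m$ disjoint from every other $D_l$. For $1\le l\le m-1$, \eqref{f.gct.eq1} reads $\max D_m=U(m,m)+R_m<U(l,l)-R_l=\min D_l$, so $D_l$ lies strictly to the right of $D_m$; for $m+1\le l\le n$, \eqref{f.gct.eq2} gives $\max D_l<\min D_m$, so $D_l$ lies strictly to the left. Hence $D_m$ is by itself a connected component of $\bigcup_lD_l$, and by (ii) there is a unique eigenvalue $\mu^\ast$ of $U$ in $D_m$, which moreover has multiplicity one.

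Next I would locate $\mu^\ast$ in the decreasing order. Put $S^+=\bigcup_{l<m}D_l$ and $S^-=\bigcup_{l>m}D_l$. By the previous step $S^+$ lies strictly to the right of $D_m$ and $S^-$ strictly to the left, so $S^+$, $D_m$, $S^-$ are pairwise disjoint, relatively clopen subsets of $\bigcup_lD_l$ whose union is all of $\bigcup_lD_l$; in particular $S^+$ and $S^-$ are each unions of connected components, involving exactly the intervals $D_1,\dots,D_{m-1}$ and $D_{m+1},\dots,D_n$ respectively. By (ii), $U$ has exactly $m-1$ eigenvalues (with multiplicity) in $S^+$ and exactly $n-m$ in $S^-$. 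Any eigenvalue lying in $S^+$ is $\ge\min_{1\le l\le m-1}(U(l,l)-R_l)>U(m,m)+R_m\ge\mu^\ast$, hence strictly larger than $\mu^\ast$; likewise any eigenvalue in $S^-$ is $\le\max_{m+1\le l\le n}(U(l,l)+R_l)<U(m,m)-R_m\le\mu^\ast$, hence strictly smaller. Thus exactly $m-1$ eigenvalues of $U$ exceed $\mu^\ast$, exactly $n-m$ lie below it, and $\mu^\ast$ is simple; therefore $\mu^\ast=\lambda_m(U)$.

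Finally I would read off the displayed identity, where the index range ``$1\le l\le k$'' should be read as ``$1\le l\le n$''. Since $\lambda_m(U)=\mu^\ast\in D_m$ and $D_m$ is disjoint from $\bigcup_{l\ne m}D_l$, the value $\lambda_m(U)$ is not in $\bigcup_{l\ne m}D_l$ and so survives the set difference; conversely, for $j\ne m$ the previous step places $\lambda_j(U)$ in $S^+$ (if $j<m$) or in $S^-$ (if $j>m$), hence in some $D_l$ with $l\ne m$, so $\lambda_j(U)$ is removed. Therefore the set difference equals $\{\lambda_m(U)\}$. I do not expect a genuine obstacle here: the only points requiring care are invoking the multiplicity-counting refinement (ii) correctly and keeping track of the decreasing order of the $\lambda_j(U)$'s, the substantive input (ii) being quoted from \cite{Varga:book}.
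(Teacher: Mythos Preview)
Your proof is correct and is precisely the standard Gershgorin argument the paper has in mind: the paper does not spell out a proof but simply states that Fact~\ref{f.gct} ``is a direct consequence of the Gershgorin circle theorem; see Theorem~1.6, pg~8 of \cite{Varga:book}'', which is exactly the connected-component counting refinement you invoke as (ii). Your observation that the index range ``$1\le l\le k$'' in the displayed conclusion should read ``$1\le l\le n$'' is also correct.
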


\begin{remark}
The assumptions \eqref{f.gct.eq1} and \eqref{f.gct.eq2} of Fact \ref{f.gct} mean that the Gershgorin disk containing the $m$-th largest eigenvalue is disjoint from any other Gershgorin disk. 
\end{remark}

\begin{proof}[Proof of Lemma \ref{proof.l1}]
The first step is to show that 
\begin{equation}
\label{proof.l1.eq1}\mu\in\bigl\{\lambda_1(V),\ldots,\lambda_k(V)\bigr\}\text{ w.h.p.}
\end{equation}
To that end, fix $N\ge1$ and a sample point for which $\|W\|<\mu$. The following calculations are done for that fixed sample point. 

Let $v$ be an eigenvector of $A$, with norm $1$, corresponding to $\lambda_i(A)$. That is,
\begin{equation}
\label{proof.l1.eq2}
\mu v=Av=Wv+N\vep\sum_{l=1}^k \theta_l(e_l^\prime v)e_l\,,
\end{equation}
by \eqref{t.inprob.eq1}. Since $\mu>\|W\|$, $\mu I-W$ is invertible, and hence
\begin{equation}
\label{eq.master}v=N\vep\sum_{l=1}^k\theta_l(e_l^\prime v)\left(\mu I-W\right)^{-1}e_l\,.
\end{equation}
Fixing $j\in\{1,\ldots,k\}$ and premultiplying the above by $\sqrt{\theta_j}\mu e_j^\prime$ yields
\[
\mu\sqrt{\theta_j}(e_j^\prime v)=N\vep\sum_{l=1}^k\sqrt{\theta_j}\theta_l(e_l^\prime v)e_j^\prime\left(I-\frac1\mu W\right)^{-1}e_l=\sum_{l=1}^kV(j,l)\sqrt{\theta_l}(e_l^\prime v)\,.
\]
As the above holds for all $1\le j\le k$, this means that if
\begin{equation}
\label{proof.l1.eqnew1}u=\left[\sqrt{\theta_1}(e_1^\prime v)\ldots\sqrt{\theta_k}(e_k^\prime v)\right]^\prime\,,
\end{equation}
then
\begin{equation}
\label{proof.l1.eqnew2}Vu=\mu u\,.
\end{equation}
Recalling that in the above calculations a sample point is fixed such that $\|W\|<\mu$, what we have shown, in other words, is that a vector $u$ satisfying the above exists w.h.p.

In order to complete the proof of \eqref{proof.l1.eq1}, it suffices to show that $u$ is a non-null vector w.h.p. To that end, premultiply \eqref{proof.l1.eq2} by $v^\prime$ to obtain that
\[
\mu=v^\prime Wv+N\vep\|u\|^2\,.
\]
Dividing both sides by $N\vep$ and using Lemma \ref{est.l1} implies that
\[
\|u\|^2=\theta_i+o_{hp}(1)\,.
\]
Thus, $u$ is a non-null vector w.h.p. From this and \eqref{proof.l1.eqnew2}, \eqref{proof.l1.eq1} follows. 

Lemma \ref{proof.l0} shows that for all $l\in\{1,\ldots,i-1\}$,
\begin{align*}
&\left[V(i,i)+\sum_{1\le j\le k,\,j\neq i}|V(i,j)|\right]-\left[V(l,l)-\sum_{1\le j\le k,\,j\neq l}|V(l,j)|\right]\\
=&N\vep\left(\theta_i-\theta_l\right)(1+o_{hp}(1))\,,
\end{align*}
as $N\to\infty$. Since $i\in\cal I$, $\theta_i-\theta_l<0$, and hence
\[
V(i,i)+\sum_{1\le j\le k,\,j\neq i}|V(i,j)|<V(l,l)-\sum_{1\le j\le k,\,j\neq l}|V(l,j)|\text{ w.h.p.}
\]
A similar calculation shows that for $l\in\{i+1,\ldots,k\}$,
\[
V(i,i)-\sum_{1\le j\le k,\,j\neq i}|V(i,j)|>V(l,l)+\sum_{1\le j\le k,\,j\neq l}|V(l,j)|\text{ w.h.p.}
\]
In view of \eqref{proof.l1.eq1} and Fact \ref{f.gct}, the proof would follow once it can be shown that for all $l\in\{1,\ldots,k\}\setminus\{i\}$,
\[
\left|\mu-V(l,l)\right|>\sum_{1\le j\le k,\,j\neq l}|V(l,j)|\text{ w.h.p.}
\]
This follows, once again, by dividing both sides by $N\vep$ and using Theorem \ref{t.inprob} and Lemma \ref{proof.l0}. This completes the proof.
\end{proof}

The next step is to write
\begin{equation}
\label{eq.infty}\left(I-\frac1\mu W\right)^{-1}=\sum_{n=0}^\infty\mu^{-n}W^n\,,
\end{equation}
which is possible because $\|W\|<\mu$. Denote
\[
Z_{j,l,n}=e_j^\prime W^n e_l\,,1\le j,l\le k\,,n\ge0\,,
\]
which should not be confused with $Z_{ij}$ defined in \eqref{eq.defZij}, and for $n\ge0$, let $Y_n$ be a $k\times k$ matrix with
\[
Y_n(j,l)=\sqrt{\theta_j\theta_l}N\vep Z_{j,l,n}\,,1\le j,l\le k\,.
\]
The following bounds will be used several times.

\begin{lemma}\label{l.y1}
It holds that
\[
\E\left(\|Y_1\|\right)=O\left(N\vep^{3/2}\right)\,,
\]
and
\[
\|Y_1\|=o_{hp}\left((N\vep)^2\right)\,.
\]
\end{lemma}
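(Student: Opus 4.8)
The plan is to prove Lemma \ref{l.y1} by writing $\|Y_1\|$ in terms of the $k\times k$ matrix entries $Y_1(j,l)=\sqrt{\theta_j\theta_l}N\vep\, e_j^\prime We_l$ and controlling each entry by the estimates recorded in Section \ref{sec:est}. Since $k$ is fixed, all matrix norms on $k\times k$ matrices are equivalent, and in particular
\[
\|Y_1\|\le\sum_{1\le j,l\le k}|Y_1(j,l)|=N\vep\sum_{1\le j,l\le k}\sqrt{\theta_j\theta_l}\,\bigl|e_j^\prime We_l\bigr|\,,
\]
so it suffices to bound $\E\bigl(|e_j^\prime We_l|\bigr)$ and to obtain a high-probability bound on $|e_j^\prime We_l|$ for each fixed pair $(j,l)$.

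For the expectation bound, I would first note that $e_j^\prime We_l$ has mean zero since $W$ has mean zero; hence by Jensen's inequality $\E\bigl(|e_j^\prime We_l|\bigr)\le\bigl(\Var(e_j^\prime We_l)\bigr)^{1/2}$. Now the vectors $e_j,e_l$ of \eqref{eq.defej} have entries bounded in absolute value by $N^{-1/2}\sup_x|r_i(x)|$, which under Assumption F1 is a fixed multiple of $N^{-1/2}$; after absorbing this constant, Lemma \ref{est.l4}, equation \eqref{est.l4.eq1}, gives $\Var(e_j^\prime We_l)=O(\vep)$, so that $\E\bigl(|e_j^\prime We_l|\bigr)=O(\sqrt\vep)$. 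Summing the finitely many terms and multiplying by $N\vep$ yields $\E(\|Y_1\|)=O\bigl(N\vep^{3/2}\bigr)$, which is the first claim.

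For the second claim I would use Lemma \ref{est.l3}, specifically \eqref{est.l3.2}, which gives $e_j^\prime We_l=o_{hp}(N\vep)$ for each fixed $(j,l)$ (again after rescaling $e_j,e_l$ by a fixed constant so that their entries lie in $[-1/\sqrt N,1/\sqrt N]$). Multiplying by $N\vep$, we get $Y_1(j,l)=o_{hp}\bigl((N\vep)^2\bigr)$; a finite union bound over the $k^2$ pairs preserves the w.h.p.\ statement (the error probabilities $O(e^{-(\log N)^\eta})$ add up to an error of the same form), so $\|Y_1\|\le\sum_{j,l}|Y_1(j,l)|=o_{hp}\bigl((N\vep)^2\bigr)$. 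This completes the proof.

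There is no serious obstacle here: the lemma is a direct repackaging of the estimates of Section \ref{sec:est} together with the triviality that norms on a fixed-dimensional matrix space are comparable and that a finite union of w.h.p.\ events is w.h.p. The only minor point to be careful about is the rescaling needed so that the hypotheses of Lemmas \ref{est.l3} and \ref{est.l4} (entries in $[-1/\sqrt N,1/\sqrt N]$) apply to the vectors $e_j$ of \eqref{eq.defej}, whose entries are $N^{-1/2}r_j(\cdot)$ with $r_j$ bounded; this costs only a fixed constant, which is harmless for both asymptotic statements.
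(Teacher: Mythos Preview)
Your proof is correct and follows essentially the same route as the paper: bound $\|Y_1\|$ by the sum of its entries, use $\E|Z_{j,l,1}|\le\sqrt{\Var(Z_{j,l,1})}=O(\sqrt\vep)$ via Lemma~\ref{est.l4} for the expectation bound, and invoke \eqref{est.l3.2} of Lemma~\ref{est.l3} together with a finite union bound for the high-probability statement. Your explicit remark about rescaling $e_j$ by the bound on $r_j$ so that the hypotheses of Lemmas~\ref{est.l3} and~\ref{est.l4} apply is a detail the paper leaves implicit.
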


\begin{proof}
Lemma \ref{est.l4} implies that
\[
\Var\left(Z_{j,l,1}\right)=O(\vep)\,,1\le j,l\le k\,.
\]
Hence,
\begin{align*}
\E\|Y_1\|=&O\left(N\vep\sum_{j,l=1}^k\E|Z_{j,l,1}|\right)\\
=&O\left(N\vep\sum_{j,l=1}^k\sqrt{\Var(Z_{j,l,1})}\right)\\
=&O\left(N\vep^{3/2}\right)\,,
\end{align*}
the equality in the second line using the fact that $Z_{j,l,1}$ has mean $0$. This proves the first claim. The second claim follows from \eqref{est.l3.2} of Lemma \ref{est.l3}. 
\end{proof}

The next step is to truncate the infinite sum in \eqref{eq.infty} to level $L$, where $L=[\log N]$ as defined before. 

\begin{lemma}\label{proof.l2}
It holds that
\[
\mu=\lambda_i\left(\sum_{n=0}^L\mu^{-n}Y_n\right)+o_{hp}\left(\sqrt\vep\right)\,.
\]
\end{lemma}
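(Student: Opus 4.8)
The plan is to start from Lemma \ref{proof.l1}, which tells us that w.h.p.\ $\mu=\lambda_i(V)$, and to control the tail of the Neumann series \eqref{eq.infty} that defines $V$. Writing $V=\sum_{n=0}^\infty\mu^{-n}Y_n$ on the event $\{\|W\|<\mu\}$ (which holds w.h.p.\ by Lemma \ref{est.l1} and Theorem \ref{t.inprob}), I would split this as $V=\bigl(\sum_{n=0}^L\mu^{-n}Y_n\bigr)+R_L$ where $R_L=\sum_{n>L}\mu^{-n}Y_n$. Since eigenvalues are $1$-Lipschitz with respect to the operator norm (Weyl's inequality),
\[
\left|\mu-\lambda_i\Bigl(\sum_{n=0}^L\mu^{-n}Y_n\Bigr)\right|=\left|\lambda_i(V)-\lambda_i\Bigl(\sum_{n=0}^L\mu^{-n}Y_n\Bigr)\right|\le\|R_L\|\qquad\text{w.h.p.},
\]
so everything reduces to showing $\|R_L\|=o_{hp}(\sqrt\vep)$.

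For the tail bound, note that each $Y_n$ has entries $\sqrt{\theta_j\theta_l}N\vep\,e_j^\prime W^n e_l$, so in operator norm $\|Y_n\|\le C\,N\vep\,\|W\|^n$ crudely (using $\|e_j\|=O(1)$ and $\|W^n\|\le\|W\|^n$), and hence
\[
\|R_L\|\le C\,N\vep\sum_{n>L}\mu^{-n}\|W\|^n=C\,N\vep\,\frac{(\|W\|/\mu)^{L+1}}{1-\|W\|/\mu}.
\]
On the w.h.p.\ event where $\|W\|=O_{hp}(\sqrt{N\vep})$ and $\mu=N\vep\theta_i(1+o_{hp}(1))$, we have $\|W\|/\mu=O_{hp}\bigl((N\vep)^{-1/2}\bigr)$, which is $o_{hp}(1)$; so the denominator is bounded below w.h.p., and $(\|W\|/\mu)^{L+1}\le C^{L+1}(N\vep)^{-(L+1)/2}$. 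With $L=[\log N]$ and the sparsity assumption $N\vep\gg(\log N)^\xi$ for $\xi>8$, the factor $(N\vep)^{-(L+1)/2}$ decays faster than any polynomial in $N$; multiplying by $N\vep$ still leaves something that is $o_{hp}(\sqrt\vep)$ (indeed $o_{hp}(N^{-A})$ for every $A$), completing the bound. Finally, one assembles: on the intersection of the w.h.p.\ events of Lemma \ref{proof.l1}, Lemma \ref{est.l1}, and Theorem \ref{t.inprob}, the displayed inequality holds and $\|R_L\|=o_{hp}(\sqrt\vep)$, which gives the claim.

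The main obstacle is bookkeeping the truncation error quantitatively against the scale $\sqrt\vep$ rather than merely against $N\vep$: one must be careful that the geometric-tail estimate survives multiplication by the prefactor $N\vep$ (which is large), and this is precisely where the choice $L=[\log N]$ together with the strong sparsity Assumption E1.\ ($\xi>8$) is used — the super-polynomial decay of $(N\vep)^{-L/2}$ beats the polynomial prefactor $N\vep$ with room to spare. A secondary subtlety is that $\mu$ itself is random and appears in the exponent base $\|W\|/\mu$, so one should first restrict to the high-probability event controlling both $\|W\|$ and $\mu$ before doing the deterministic geometric-series estimate; since all the relevant events are w.h.p., their intersection is too, and the conclusion is stated in the $o_{hp}$ sense.
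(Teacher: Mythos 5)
Your proposal is correct and follows essentially the same route as the paper: truncate the Neumann series for $V$ at level $L$, bound the tail by $O_{hp}\bigl((N\vep)^{-(L-1)/2}\bigr)$ using $\|W\|=O_{hp}(\sqrt{N\vep})$ and $\mu\asymp N\vep$, and verify that this is $o(\sqrt\vep)$ via Assumption E1 (the paper phrases this last step as $-\log\vep=o((L-1)\log(N\vep))$, which is your super-polynomial-decay observation). The only cosmetic difference is that you make the Weyl/Lipschitz eigenvalue-perturbation step explicit, which the paper leaves implicit when it says it ``suffices to check'' the tail bound.
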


\begin{proof}
From the definition of $V$, it is immediate that for $1\le j,l\le k$,
\[
V(j,l)=N\vep\sqrt{\theta_j\theta_l}\sum_{n=0}^\infty\mu^{-n}e_j^\prime W^ne_l\,\one(\|W\|<\mu)\,,
\]
and hence
\[
V=\one(\|W\|<\mu)\sum_{n=0}^\infty\mu^{-n}Y_n\,.
\]
For the sake of notational simplicity, let us suppress $\one(\|W\|<\mu)$. Therefore, with the implicit understanding that the sum is set as zero if $\|W\|\ge\mu$, for the proof it suffices to check that
\begin{equation}
\label{proof.l2.eq1}\left\|\sum_{n=L+1}^\infty\mu^{-n}Y_n\right\|=o_{hp}(\sqrt\vep)\,.
\end{equation}
To that end, Theorem \ref{t.inprob} and Lemma \ref{est.l1} imply that
\begin{align*}
\left\|\sum_{n=L+1}^\infty\mu^{-n}Y_n\right\|\le&\sum_{n=L+1}^\infty|\mu|^{-n}\|Y_n\|\\
=&O_{hp}\left((N\vep)^{-(L-1)/2}\right)\,.
\end{align*}
In order to prove \eqref{proof.l2.eq1}, it suffices to show that as $N\to\infty$,
\begin{equation}
\label{proof.l2.eq0}-\log\vep=o\left((L-1)\log(N\vep)\right)\,.
\end{equation}
To that end, recall \eqref{eq.vep} to argue that
\begin{equation}
\label{proof.l2.eq2}N^{-1}=o(\vep)
\end{equation}
and
\begin{equation}
\label{proof.l2.eq3}\log\log N=O(\log(N\vep))\,.
\end{equation}
By \eqref{proof.l2.eq2}, it follows that
\begin{align*}
-\log\vep=&O\left(\log N\right)\\
=&o\left(\log N\log\log N\right)\\
=&o\left((L-1)\log(N\vep)\right)\,,
\end{align*}
the last line using \eqref{proof.l2.eq3}. Therefore, \eqref{proof.l2.eq0} follows, which ensures \eqref{proof.l2.eq1}, which in turn completes the proof.
\end{proof}

In the next step, $Y_n$ is replaced by its expectation for $n\ge2$.

\begin{lemma}\label{proof.l3}
It holds that
\[
\mu=\lambda_i\left(Y_0+\mu^{-1}Y_1+\sum_{n=2}^L\mu^{-n}\E(Y_n)\right)+o_{hp}\left(\sqrt\vep\right)\,.
\]
\end{lemma}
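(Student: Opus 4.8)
The plan is to pass from Lemma \ref{proof.l2} to Lemma \ref{proof.l3} by controlling, for each $2\le n\le L$, the deviation $\mu^{-n}(Y_n-\E(Y_n))$ in operator norm and summing these bounds. Since all the matrices involved are $k\times k$ with $k$ fixed, the operator norm of $Y_n-\E(Y_n)$ is comparable (up to a constant depending only on $k$) to $\max_{1\le j,l\le k}|Z_{j,l,n}-\E(Z_{j,l,n})|$ multiplied by $N\vep\max_j\theta_j$. The key input is the concentration estimate \eqref{est.l3.1} of Lemma \ref{est.l3}, which says that w.h.p., simultaneously for all $2\le n\le L$,
\[
\left|Z_{j,l,n}-\E(Z_{j,l,n})\right|\le N^{(n-1)/2}\vep^{n/2}(\log N)^{n\xi/4}\,.
\]

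First I would record that, on the high-probability event where the estimate above holds and where $\|W\|<\mu$, one has by Theorem \ref{t.inprob} that $\mu\ge cN\vep$ for some constant $c>0$, so $|\mu|^{-n}\le (cN\vep)^{-n}$. Combining this with the displayed concentration bound and the factor $N\vep$ in the definition of $Y_n$, the $n$-th term satisfies, w.h.p.,
\[
\left\|\mu^{-n}\left(Y_n-\E(Y_n)\right)\right\|=O_{hp}\left((N\vep)^{-(n-1)}N^{(n-1)/2}\vep^{n/2}(\log N)^{n\xi/4}\right)=O_{hp}\left(N^{-(n-1)/2}\vep^{(n-2)/2}(\log N)^{n\xi/4}\right)\,.
\]
For $n=2$ this is $O_{hp}\left(N^{-1/2}(\log N)^{\xi/2}\right)$, and since Assumption E1 gives $(\log N)^\xi=o(N\vep)$ and $\vep\le 1$, one checks $N^{-1/2}(\log N)^{\xi/2}=o(\sqrt\vep)$; indeed $N^{-1}(\log N)^\xi=o(\vep)$ follows from $(\log N)^\xi=o(N\vep)$. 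For $n\ge3$ the extra factors $N^{-(n-1)/2}\vep^{(n-2)/2}$ beat the polylogarithmic term comfortably, and summing the geometric-type tail over $3\le n\le L$ contributes $o_{hp}(\sqrt\vep)$ as well (this is where one uses, as in the proof of Lemma \ref{proof.l2}, that $L=[\log N]\to\infty$ so that the tail is negligible). Hence
\[
\left\|\sum_{n=2}^L\mu^{-n}\left(Y_n-\E(Y_n)\right)\right\|=o_{hp}(\sqrt\vep)\,.
\]

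To finish, I would invoke Weyl's inequality: the $i$-th eigenvalue is $1$-Lipschitz in the operator norm, so replacing $\sum_{n=2}^L\mu^{-n}Y_n$ by $\sum_{n=2}^L\mu^{-n}\E(Y_n)$ inside $\lambda_i(\cdot)$ changes the value by at most the operator norm of the difference, which is $o_{hp}(\sqrt\vep)$ by the previous paragraph. Absorbing this into the $o_{hp}(\sqrt\vep)$ error already present in Lemma \ref{proof.l2} yields the claim. The main obstacle — really the only delicate point — is the bookkeeping on the exponents for $n=2$: one must verify carefully that the $n=2$ term, which is genuinely of order $N^{-1/2}(\log N)^{\xi/2}$ rather than something with a spare power of $\vep$ to spend, is still $o(\sqrt\vep)$, and this is exactly where the quantitative strength of Assumption E1 (the exponent $\xi>8$, or at least $\xi$ large enough that $(\log N)^\xi\ll N\vep$) is used; the higher-order terms $n\ge 3$ and the truncation tail $n>L$ are routine given Lemmas \ref{est.l1} and \ref{est.l3} and the argument already carried out in Lemma \ref{proof.l2}.
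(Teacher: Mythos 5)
Your proposal follows essentially the same route as the paper: reduce to bounding $\bigl\|\sum_{n=2}^L\mu^{-n}(Y_n-\E(Y_n))\bigr\|$ via the Lipschitz property of $\lambda_i$ (Weyl), replace $\mu^{-n}$ by $(cN\vep)^{-n}$ using Theorem \ref{t.inprob}, feed in the concentration estimate \eqref{est.l3.1} with a union bound over $n$ and $(j,l)$, and sum. One arithmetic slip: $(N\vep)^{-(n-1)}N^{(n-1)/2}\vep^{n/2}=N^{-(n-1)/2}\vep^{-(n-2)/2}$, not $N^{-(n-1)/2}\vep^{(n-2)/2}$, so for $n\ge3$ the terms do not ``beat the polylogarithmic factor comfortably'' via a spare positive power of $\vep$; rather, the $n$-th term equals $N^{-1/2}(\log N)^{\xi/2}\rho^{\,n-2}$ with $\rho=(N\vep)^{-1/2}(\log N)^{\xi/4}=o(1)$ by Assumption E1, so the sum is a convergent geometric series dominated by its $n=2$ term $N^{-1/2}(\log N)^{\xi/2}=o(\sqrt\vep)$ --- which is exactly the computation in the paper's display. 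With that correction the argument is complete and matches the paper's proof.
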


\begin{proof}
In view of Theorem \ref{t.inprob} and Lemma \ref{proof.l2}, all that has to be checked is
\begin{equation}
\label{proof.l3.eq1}\sum_{n=2}^L(N\vep)^{-n}\|Y_n-\E(Y_n)\|=o_{hp}(\sqrt\vep)\,.
\end{equation}
For that, invoke Lemma \ref{est.l3} to claim that
\[
\max_{2\le n\le L,\,1\le j,l\le k}P\left(\left|Z_{j,l,n}-\E(Z_{j,l,n})\right|>N^{(n-1)/2}\vep^{n/2}(\log N)^{n\xi/4}\right)
\]
\begin{equation}
\label{proof.l3.eq2}=O\left(e^{-(\log N)^{\eta_1}}\right)\,,
\end{equation}
where $\xi$ is as in \eqref{eq.vep}. 

Our next claim is that there exists $C_2>0$ such that for $N$ large,
\begin{equation}\label{proof.l3.eq3}
\bigcap_{2\le n\le L,1\le j,l\le k}\left[\left|Z_{j,l,n}-\E(Z_{j,l,n})\right|\le N^{(n-1)/2}\vep^{n/2}(\log N)^{n\xi/4}\right]
\end{equation}
\[
\subset\left[\sum_{n=2}^L(N\vep)^{-n}\|Y_n-\E(Y_n)\|\le C_2\sqrt\vep\left((N\vep)^{-1}(\log N)^\xi\right)^{1/2}\right]\,.
\]
To see this, suppose that the event on the left hand side holds. Then, for fixed $1\le j,l\le k$, and large $N$,
\begin{align*}
&\sum_{n=2}^L(N\vep)^{-n}\left\|Y_n(j,l)-\E\left[Y_n(j,l)\right]\right\|\\
\le&\theta_1 N\vep\sum_{n=2}^L(N\vep)^{-n}\left|Z_{j,l,n}-\E\left(Z_{j,l,n}\right)\right|\\
\le&\theta_1\sum_{n=2}^\infty(N\vep)^{-(n-1)}N^{(n-1)/2}\vep^{n/2}(\log N)^{n\xi/4}\\
=&\left[1-(N\vep)^{-1/2}(\log N)^{\xi/4}\right]^{-1}\theta_1\sqrt\vep(N\vep)^{-1/2}(\log N)^{\xi/2}\,.
\end{align*}
Thus, \eqref{proof.l3.eq3} holds for some $C_2>0$. 

Combining \eqref{proof.l3.eq2} and \eqref{proof.l3.eq3}, it follows that 
\begin{align*}
&P\left(\sum_{n=2}^L(N\vep)^{-n}\|Y_n-\E(Y_n)\|>C_2\sqrt\vep\left((N\vep)^{-1}(\log N)^\xi\right)^{1/2}\right)\\
=&O\left(\log N e^{-(\log N)^{\eta_1}}\right)\\
=&o\left(e^{-(\log N)^{(1+\eta_1)/2}}\right)\,.
\end{align*}
This, with the help of \eqref{eq.vep}, establishes \eqref{proof.l3.eq1} from which the proof follows.
\end{proof}

The goal of the next two lemmas is replacing $\mu$ by a deterministic quantity in 
\[
\sum_{n=2}^L\mu^{-n}\E(Y_n)\,.
\]

\begin{lemma}\label{proof.l4}
For $N$ large, the deterministic equation
\begin{equation}
\label{proof.l4.eq1}x=\lambda_i\left(\sum_{n=0}^Lx^{-n}\E(Y_n)\right),\,x>0\,,
\end{equation}
has a solution $\tilde\mu$ such that
\begin{equation}\label{proof.l4.eq2}
0<\liminf_{N\to\infty}(N\vep)^{-1}\tilde\mu\le\limsup_{N\to\infty}(N\vep)^{-1}\tilde\mu<\infty\,.
\end{equation}
\end{lemma}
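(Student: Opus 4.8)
The plan is to solve \eqref{proof.l4.eq1} by an intermediate value argument, after showing that its right hand side is, to leading order, the constant $N\vep\theta_i$ throughout the window of $x$-values of interest. Write
\[
\phi(x)=\lambda_i\left(\sum_{n=0}^Lx^{-n}\E(Y_n)\right)\,,\qquad x>0\,,
\]
so that \eqref{proof.l4.eq1} is the fixed-point equation $x=\phi(x)$. Note first that $\E(Y_1)=0$ since $W$ has mean zero, so only the terms $n=0$ and $n\ge2$ contribute, and that each $\E(Y_n)$ is symmetric (as $W$ is), so $\lambda_i$ of the sum is well defined.

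I would begin with two ingredients. By \eqref{proof.l0.eq1}, $(N\vep)^{-1}Y_0\to\diag(\theta_1,\ldots,\theta_k)$ entrywise, hence in operator norm, so Weyl's inequality gives $\lambda_i(Y_0)=N\vep(\theta_i+o(1))$. Second, by Lemma \ref{est.l2} (applied to suitable constant multiples of the $e_j$, to absorb the $L^\infty$ bounds on the $r_i$ from Assumption F1.) there is a finite constant $C$ with $\|\E(Y_n)\|\le CN\vep(CN\vep)^{n/2}$ for $2\le n\le L$. Fixing any $0<a<\theta_i<b$, for $x\ge aN\vep$ and $N$ large a geometric-series estimate yields
\[
\left\|\sum_{n=2}^Lx^{-n}\E(Y_n)\right\|\le CN\vep\sum_{n\ge2}\left(\frac{\sqrt C}{a\sqrt{N\vep}}\right)^n=O(1)\,,
\]
uniformly in $x\in[aN\vep,bN\vep]$. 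Combining the two via Weyl's inequality, $\phi(x)=\lambda_i(Y_0)+O(1)=N\vep\theta_i(1+o(1))$ uniformly on $[aN\vep,bN\vep]$, using that $N\vep\to\infty$ by Assumption E1.

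Finally, $\phi$ is continuous on $(0,\infty)$ (a finite sum of symmetric matrices whose entries are rational in $x$, composed with the continuous map $M\mapsto\lambda_i(M)$), so $g(x):=\phi(x)-x$ is continuous on $[aN\vep,bN\vep]$, with $g(aN\vep)=N\vep(\theta_i-a+o(1))>0$ and $g(bN\vep)=N\vep(\theta_i-b+o(1))<0$ for $N$ large. The intermediate value theorem then produces $\tilde\mu\in(aN\vep,bN\vep)$ with $\tilde\mu=\phi(\tilde\mu)$, and $a\le\liminf(N\vep)^{-1}\tilde\mu\le\limsup(N\vep)^{-1}\tilde\mu\le b$ gives \eqref{proof.l4.eq2}. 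There is no real obstacle here; the only care needed is to make the bound on $\|\sum_{n\ge2}x^{-n}\E(Y_n)\|$ genuinely uniform over the whole window (which the geometric-series estimate does, using only $x\ge aN\vep$), and to note that $\theta_i>0$ by the standing assumption on $I_f$, so that such $a$ and $b$ exist.
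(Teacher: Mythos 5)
Your proof is correct and follows essentially the same route as the paper's: use Lemma \ref{est.l2} to bound $\bigl\|\sum_{n\ge2}x^{-n}\E(Y_n)\bigr\|$ by a convergent geometric series of size $O(1)$ on the window $x\asymp N\vep$, identify the leading term $\lambda_i(Y_0)=N\vep\theta_i(1+o(1))$, and conclude by continuity and the intermediate value theorem applied between $aN\vep$ and $bN\vep$ with $a<\theta_i<b$. The only differences are cosmetic: you make the uniformity over the window and the continuity of $x\mapsto\lambda_i(\cdot)$ explicit, whereas the paper evaluates only at the two endpoints $(\theta_i\pm\delta)N\vep$ and invokes the intermediate value theorem implicitly.
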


\begin{proof}
Define a function
\[
h:(0,\infty)\to\bbr\,,
\]
by
\[
h(x)=\lambda_i\left(\sum_{n=0}^Lx^{-n}\E(Y_n)\right)\,.
\]
Our first claim is that for any fixed $x>0$,
\begin{equation}
\label{proof.l4.eq3}\lim_{N\to\infty}(N\vep)^{-1}h\left(xN\vep\right)=\theta_i\,.
\end{equation}
To that end, observe that since $\E(Y_1)=0$,
\[
h\left(xN\vep\right)=\lambda_i\left(\E(Y_0)+\sum_{n=2}^L(xN\vep)^{-n}\E(Y_n)\right)\,.
\]
Recalling that
\[
Y_0(j,l)=N\vep\sqrt{\theta_j\theta_l}\,e_j^\prime e_l\,,1\le j,l\le k\,,
\]
it follows by \eqref{proof.l0.eq1} that
\begin{equation}
\label{proof.l4.eq4}\lim_{N\to\infty}(N\vep)^{-1}\E(Y_0)=\diag(\theta_1,\ldots,\theta_k)\,.
\end{equation}
Lemma \ref{est.l2} implies that 
\[
\E(Z_{j,l,n})\le\left(O(N\vep)\right)^{n/2}\,,
\]
uniformly for $2\le n\le L$, and hence there exists $0<C_3<\infty$ with
\begin{equation}
\label{proof.l4.eq5}\|\E(Y_n)\|\le(C_3N\vep)^{n/2+1}\,,\,2\le n\le L\,.
\end{equation}
Therefore,
\[
\left\|\sum_{n=2}^L(xN\vep)^{-n}\E(Y_n)\right\|\le\sum_{n=2}^\infty(xN\vep)^{-n}(C_3N\vep)^{n/2+1}\to C_3^2x^{-2}\,,
\]
as $N\to\infty$. With the help of \eqref{proof.l4.eq4}, this implies that
\[
\lim_{N\to\infty}(N\vep)^{-1}\left(\sum_{n=0}^L(xN\vep)^{-n}\E(Y_n)\right)=\diag(\theta_1,\ldots,\theta_k)\,,
\]
and hence \eqref{proof.l4.eq3} follows. It follows that for a fixed $0<\delta<\theta_i$. 
\[
\lim_{N\to\infty}(N\vep)^{-1}\left[N\vep(\theta_i+\delta)-h\left((\theta_i+\delta)N\vep\right)\right]=\delta\,,
\]
and thus, for large $N$,
\[
N\vep(\theta_i+\delta)>h\left((\theta_i+\delta)N\vep\right)\,.
\]
Similarly, again for large $N$,
\[
N\vep(\theta_i-\delta)<h\left((\theta_i-\delta)N\vep\right)\,.
\]
Hence, for $N$ large, \eqref{proof.l4.eq1} has a solution $\tilde\mu$ in $[(N\vep)(\theta_i-\delta),(N\vep)(\theta_i+\delta)]$, which trivially satisfies \eqref{proof.l4.eq2}. Hence the proof.
\end{proof}

\begin{lemma}\label{proof.l5}
If $\tilde\mu$ is as in Lemma \ref{proof.l4}, then
\[
\mu-\tilde\mu=O_{hp}\left((N\vep)^{-1}\|Y_1\|+\sqrt\vep\right)\,.
\]
\end{lemma}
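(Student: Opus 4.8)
The plan is to match the fixed-point characterization of $\mu$ from Lemma \ref{proof.l3} against the deterministic equation defining $\tilde\mu$, and to use Weyl's inequality $|\lambda_i(M_1)-\lambda_i(M_2)|\le\|M_1-M_2\|$ for symmetric matrices. Since $\E(Y_1)=0$ and $Y_0$ is deterministic, Lemma \ref{proof.l4} gives $\tilde\mu=\lambda_i\bigl(Y_0+\sum_{n=2}^L\tilde\mu^{-n}\E(Y_n)\bigr)$, while Lemma \ref{proof.l3} gives $\mu=\lambda_i\bigl(Y_0+\mu^{-1}Y_1+\sum_{n=2}^L\mu^{-n}\E(Y_n)\bigr)+o_{hp}(\sqrt\vep)$. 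All the $k\times k$ matrices involved are symmetric since $W^n$ is, so subtracting and applying Weyl's inequality yields, w.h.p.,
\[
|\mu-\tilde\mu|\le\|\mu^{-1}Y_1\|+\Bigl\|\sum_{n=2}^L(\mu^{-n}-\tilde\mu^{-n})\E(Y_n)\Bigr\|+o_{hp}(\sqrt\vep)\,.
\]
By Theorem \ref{t.inprob}, $\mu^{-1}=O_{hp}((N\vep)^{-1})$, so the first term is $O_{hp}((N\vep)^{-1}\|Y_1\|)$, already of the advertised form, and the whole argument reduces to controlling the middle sum.

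The heart of the matter is to show that the middle sum is at most $\tfrac12|\mu-\tilde\mu|$ for large $N$, so that it may be absorbed into the left-hand side. By Theorem \ref{t.inprob} and \eqref{proof.l4.eq2}, there are constants $0<c<C$ such that $\mu,\tilde\mu\in[cN\vep,CN\vep]$ on one event of high probability; on that event the mean value theorem applied to $x\mapsto x^{-n}$ gives $|\mu^{-n}-\tilde\mu^{-n}|\le n(cN\vep)^{-n-1}|\mu-\tilde\mu|$. Combining this with the bound $\|\E(Y_n)\|\le(C_3N\vep)^{n/2+1}$ from \eqref{proof.l4.eq5}, the $n$-th summand is at most a constant times $n\,c^{-n-1}C_3^{n/2+1}(N\vep)^{-n/2}|\mu-\tilde\mu|$; since $N\vep\to\infty$, for $N$ large these terms decay geometrically in $n$, and the tail sums to $D(N\vep)^{-1}|\mu-\tilde\mu|$ for a deterministic constant $D$, which is indeed $\le\tfrac12|\mu-\tilde\mu|$ once $N$ is large enough.

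Feeding this back into the displayed inequality, we obtain on one high-probability event the deterministic bound $|\mu-\tilde\mu|\le\|\mu^{-1}Y_1\|+\tfrac12|\mu-\tilde\mu|+(\text{the }o_{hp}(\sqrt\vep)\text{ term})$; rearranging gives $|\mu-\tilde\mu|\le2\|\mu^{-1}Y_1\|+2\,(\text{error})=O_{hp}((N\vep)^{-1}\|Y_1\|+\sqrt\vep)$, as claimed. The only mildly delicate point is the bookkeeping: one must check that the comparisons $cN\vep\le\mu,\tilde\mu\le CN\vep$, the geometric decay of the tail, and the error term from Lemma \ref{proof.l3} all hold simultaneously on one event whose complement has probability $O(e^{-(\log N)^\eta})$ for some $\eta>1$, and then push the resulting deterministic inequality through the rearrangement. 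There is no genuine obstacle here — the lemma is essentially a contraction/perturbation estimate for the truncated resolvent fixed-point equation.
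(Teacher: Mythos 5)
Your proposal is correct and follows essentially the same route as the paper: both compare the characterizations from Lemmas \ref{proof.l3} and \ref{proof.l4} via Weyl's inequality, bound the sum $\sum_{n\ge2}$ by a multiple $O_{hp}((N\vep)^{-1})$ of $|\mu-\tilde\mu|$ using \eqref{proof.l4.eq2} and \eqref{proof.l4.eq5}, and absorb that term into the left-hand side. The only cosmetic difference is that you control $|\mu^{-n}-\tilde\mu^{-n}|$ by the mean value theorem whereas the paper uses the algebraic factorization of $a^n-b^n$; the two give the same $n(cN\vep)^{-(n+1)}|\mu-\tilde\mu|$ bound.
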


\begin{proof}
Lemmas \ref{proof.l3} and \ref{proof.l4} imply that
\begin{align*}
&|\mu-\tilde\mu|\\
=&\left|\lambda_i\left(Y_0+\mu^{-1}Y_1+\sum_{n=2}^L\mu^{-n}\E(Y_n)\right)-\lambda_i\left(\sum_{n=0}^L\tilde\mu^{-n}\E(Y_n)\right)\right|+o_{hp}(\sqrt\vep)\\
\le&\|\mu^{-1}Y_1\|+|\mu-\tilde\mu|\sum_{n=2}^L\mu^{-n}\tilde\mu^{-n}\|E(Y_n)\|\sum_{j=0}^{n-1}\mu^j\tilde\mu^{n-1-j}+o_{hp}(\sqrt\vep)\\
=&|\mu-\tilde\mu|\sum_{n=2}^L\mu^{-n}\tilde\mu^{-n}\|E(Y_n)\|\sum_{j=0}^{n-1}\mu^j\tilde\mu^{n-1-j}+O_{hp}\left((N\vep)^{-1}\|Y_1\|+\sqrt\vep\right)\,.
\end{align*}
Thus,
\begin{equation}
\label{proof.l5.eq1}|\mu-\tilde\mu|\left[1-\sum_{n=2}^L\mu^{-n}\tilde\mu^{-n}\|E(Y_n)\|\sum_{j=0}^{n-1}\mu^j\tilde\mu^{n-1-j}\right]\le O_{hp}\left((N\vep)^{-1}\|Y_1\|+\sqrt\vep\right)\,.
\end{equation}
Equations \eqref{proof.l4.eq2} and \eqref{proof.l4.eq5} imply that 
\begin{align}
\nonumber\left|\sum_{n=2}^L\mu^{-n}\tilde\mu^{-n}\|E(Y_n)\|\sum_{j=0}^{n-1}\mu^j\tilde\mu^{n-1-j}\right|=&O_{hp}\left(\sum_{n=2}^\infty n(N\vep)^{-(n+1)}(C_3N\vep)^{n/2+1}\right)\\
\nonumber=&O_{hp}\left((N\vep)^{-1}\right)\\
\label{proof.l5.eq2}=&o_{hp}(1),\,N\to\infty\,.
\end{align}
This completes the proof with the help of \eqref{proof.l5.eq1}.
\end{proof}

The next lemma is arguably the most important step in the proof of Theorem \ref{t.secorder}, the other major step being Lemma \ref{proof.l1}.

\begin{lemma}\label{proof.l6}
There exists a deterministic $\bar\mu$, which depends on $N$, such that
\[
\mu=\bar\mu+\mu^{-1}Y_1(i,i)+o_{hp}\left((N\vep)^{-1}\|Y_1\|+\sqrt\vep\right)\,.
\]
\end{lemma}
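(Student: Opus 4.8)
\textbf{Proof strategy for Lemma \ref{proof.l6}.}
The plan is to start from the equation of Lemma \ref{proof.l3},
\[
\mu=\lambda_i\left(Y_0+\mu^{-1}Y_1+\sum_{n=2}^L\mu^{-n}\E(Y_n)\right)+o_{hp}\left(\sqrt\vep\right)\,,
\]
and to isolate the $(i,i)$-entry of the perturbation $\mu^{-1}Y_1$ from the rest. Write the argument of $\lambda_i$ as $D+P+\mu^{-1}Y_1$, where $D+P:=Y_0+\sum_{n=2}^L\mu^{-n}\E(Y_n)$ is a symmetric matrix whose only randomness is through the scalar $\mu$, and which by \eqref{proof.l4.eq4} and \eqref{proof.l4.eq5} satisfies $(N\vep)^{-1}(D+P)\to\diag(\theta_1,\ldots,\theta_k)$ w.h.p. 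The key point is that $D+P$ is \emph{nearly diagonal}: its off-diagonal entries are $o_{hp}(N\vep)$ while its diagonal entries are separated (since $i\in\cal I$, the $i$-th diagonal entry is separated from the others by $\asymp N\vep$). I will first-order perturb $\lambda_i$ in the direction $\mu^{-1}Y_1$. By Lemma \ref{l.y1}, $\|\mu^{-1}Y_1\|=O_{hp}\big((N\vep)^{-1}\|Y_1\|\big)=o_{hp}(N\vep)$, so first-order perturbation theory for the $i$-th eigenvalue (which is w.h.p.\ simple and well-separated) is legitimate and gives
\[
\lambda_i\left(D+P+\mu^{-1}Y_1\right)=\lambda_i(D+P)+\mu^{-1}\,w_i^\prime Y_1 w_i+O_{hp}\left(\frac{\|\mu^{-1}Y_1\|^2}{\text{gap}}\right)\,,
\]
where $w_i$ is the unit $i$-th eigenvector of $D+P$ and $\text{gap}\asymp N\vep$. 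Since $\|\mu^{-1}Y_1\|^2/(N\vep)=O_{hp}\big((N\vep)^{-3}\|Y_1\|^2\big)=o_{hp}\big((N\vep)^{-1}\|Y_1\|\big)$ using Lemma \ref{est.l1} and $\|Y_1\|=o_{hp}((N\vep)^2)$, this quadratic error is absorbed into the claimed $o_{hp}$ term.

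Next I would replace $w_i$ by the coordinate vector $f_i$ (the $i$-th standard basis vector of $\bbr^k$). Because $D+P$ is within $o_{hp}(N\vep)$ of $\diag(\theta_1,\ldots,\theta_k)N\vep$ in operator norm and the $i$-th eigenvalue has gap $\asymp N\vep$, the Davis--Kahan $\sin\Theta$ bound gives $\|w_i-f_i\|=o_{hp}(1)$ (after fixing the sign). Then
\[
w_i^\prime Y_1 w_i = f_i^\prime Y_1 f_i + \big(w_i-f_i\big)^\prime Y_1 (w_i+f_i)= Y_1(i,i)+o_{hp}(1)\cdot O_{hp}(\|Y_1\|)\,,
\]
and after multiplying by $\mu^{-1}=O_{hp}((N\vep)^{-1})$ the error is $o_{hp}\big((N\vep)^{-1}\|Y_1\|\big)$. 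So
\[
\mu=\lambda_i(D+P)+\mu^{-1}Y_1(i,i)+o_{hp}\left((N\vep)^{-1}\|Y_1\|+\sqrt\vep\right)\,.
\]
Finally I must dispose of the residual $\mu$-dependence hidden in $\lambda_i(D+P)$, i.e.\ in the tail $\sum_{n=2}^L\mu^{-n}\E(Y_n)$. Replace each occurrence of $\mu$ there by $\tilde\mu$ from Lemma \ref{proof.l4}; the incurred error is $|\mu-\tilde\mu|\cdot\sum_{n\ge2}n\,(N\vep)^{-(n+1)}(C_3N\vep)^{n/2+1}=|\mu-\tilde\mu|\cdot O_{hp}((N\vep)^{-1})$, exactly as in \eqref{proof.l5.eq2}, and by Lemma \ref{proof.l5} $|\mu-\tilde\mu|=O_{hp}\big((N\vep)^{-1}\|Y_1\|+\sqrt\vep\big)$, so this too is $o_{hp}\big((N\vep)^{-1}\|Y_1\|+\sqrt\vep\big)$. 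Setting
\[
\bar\mu:=\lambda_i\left(\sum_{n=0}^L\tilde\mu^{-n}\E(Y_n)\right)=\tilde\mu\,,
\]
which is deterministic by Lemma \ref{proof.l4}, yields the claim.

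\textbf{Main obstacle.} The delicate point is justifying the first-order eigenvalue perturbation expansion with a \emph{quantitative, high-probability} control of the second-order remainder, and simultaneously controlling the direction of the eigenvector $w_i$ of the (random, through $\mu$) matrix $D+P$. Both rest on the eigenvalue gap of $D+P$ being of order exactly $N\vep$ w.h.p.\ — which holds because $i\in\cal I$ makes $\theta_i$ isolated — together with the bound $\|Y_1\|=o_{hp}((N\vep)^2)$ from Lemma \ref{l.y1}; one has to be careful that $(N\vep)^{-1}\|Y_1\|$ is the genuine order of the fluctuation term being extracted (it is, since $\E\|Y_1\|=O(N\vep^{3/2})$), so that the quadratic and eigenvector-tilt errors are provably of strictly smaller order. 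When $k=1$ everything is a scalar identity and $w_i=f_i=1$ trivially, which is why the statement is a tautology in that case; the genuine content is entirely in the $k\ge2$ linear algebra just sketched.
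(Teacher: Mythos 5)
Your proposal is correct, and its overall architecture (truncate via Lemma \ref{proof.l3}, freeze the tail at $\tilde\mu$ via Lemmas \ref{proof.l4} and \ref{proof.l5}, then extract the $(i,i)$-entry of $\mu^{-1}Y_1$ as the first-order term) is exactly that of the paper; note also that your $\bar\mu=\lambda_i\bigl(\sum_{n=0}^L\tilde\mu^{-n}\E(Y_n)\bigr)$ coincides with the paper's choice and equals $\tilde\mu$ by the defining equation \eqref{proof.l4.eq1}. Where you genuinely diverge is in the key step, the analogue of \eqref{proof.l6.eq2}: you invoke the quantitative first-order eigenvalue expansion with remainder $O(\|E\|^2/\mathrm{gap})$ together with a Davis--Kahan bound $\|w_i-f_i\|=o_{hp}(1)$ to pass from $w_i^\prime Y_1w_i$ to $Y_1(i,i)$. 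The paper instead re-derives this perturbation estimate by hand: it centres both matrices by their $(i,i)$-entries to form $M$ and $H$, constructs the $i$-th eigenvectors of $U_1=(N\vep)^{-1}M$ and $U_2=(N\vep)^{-1}H$ explicitly (normalised to have $i$-th coordinate $1$, the remaining coordinates being $-(\tilde U_m-\lambda_i(U_m)I_{k-1})^{-1}\tilde u_m$), and bounds $|\lambda_i(U_1)-\lambda_i(U_2)|$ directly from the two eigenvector equations. Both implementations rest on the same two facts --- the spectral gap at $\theta_i$ is of order $N\vep$ because $i\in\cal I$, and $\|Y_1\|=o_{hp}((N\vep)^2)$ from Lemma \ref{l.y1} --- and your error accounting ($(N\vep)^{-3}\|Y_1\|^2=o_{hp}((N\vep)^{-1}\|Y_1\|)$ for the quadratic remainder, $o_{hp}(1)\cdot(N\vep)^{-1}\|Y_1\|$ for the eigenvector tilt) is sound. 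Your route is shorter but leans on standard perturbation results quoted with explicit constants and a uniform high-probability event; the paper's is self-contained, which makes the w.h.p.\ quantification transparent since every estimate is an explicit inequality on a fixed event. Either way the statement is a tautology when $k=1$, as you observe.
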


\begin{proof}
Define a $k\times k$ deterministic matrix
\[
X=\sum_{n=0}^L\tilde\mu^{-n}\E(Y_n)\,,
\]
which, as usual, depends on $N$. Lemma \ref{proof.l5} and \eqref{proof.l5.eq2} imply that
\begin{align*}
\left\|X-\sum_{n=0}^L\mu^{-n}\E(Y_n)\right\|\le&|\mu-\tilde\mu|\sum_{n=2}^L\mu^{-n}\tilde\mu^{-n}\|\E(Y_n)\|\sum_{j=0}^{n-1}\mu^j\tilde\mu^{n-1-j}\\
=&o_{hp}\left(|\mu-\tilde\mu|\right)\\
=&o_{hp}\left((N\vep)^{-1}\|Y_1\|+\sqrt\vep\right)\,.
\end{align*}
By Lemma \ref{proof.l3} it follows that
\begin{equation}
\label{proof.l6.eq1}\mu=\lambda_i\left(\mu^{-1}Y_1+X\right)+o_{hp}\left((N\vep)^{-1}\|Y_1\|+\sqrt\vep\right)\,.
\end{equation}
Let
\[
H=X+\mu^{-1}Y_1-\left(X(i,i)+\mu^{-1}Y_1(i,i)\right)I\,,
\]
\[
M=X-X(i,i)I\,,
\]
and
\[
\bar\mu=\lambda_i(X)\,.
\]
Clearly,
\begin{align*}
\lambda_i\left(\mu^{-1}Y_1+X\right)=&X(i,i)+\mu^{-1}Y_1(i,i)+\lambda_i(H)\\
=&\bar\mu-\lambda_i(M)+\mu^{-1}Y_1(i,i)+\lambda_i(H)\,.
\end{align*}
Thus, the proof would follow with the aid of \eqref{proof.l6.eq1} if it can be shown that
\begin{equation}
\label{proof.l6.eq2}\lambda_i(H)-\lambda_i(M)=o_{hp}\left((N\vep)^{-1}\|Y_1\|\right)\,.
\end{equation}
If $k=1$, then $i=1$ and hence $H=M=0$. Thus, the above is a tautology in that case. Therefore, assume without loss of generality that $k\ge2$.

Proceeding towards proving \eqref{proof.l6.eq2} when $k\ge2$, set
\begin{equation}
\label{proof.l6.u1}U_1=(N\vep)^{-1}M\,,
\end{equation}
and
\begin{equation}
\label{proof.l6.u2}U_2=(N\vep)^{-1}H\,.
\end{equation}

The main idea in the proof of \eqref{proof.l6.eq2} is to observe that the eigenvector of $U_1$ corresponding to $\lambda_i(U_1)$ is same as that of $M$ corresponding to $\lambda_i(M)$, and likewise for $U_2$ and $X$. Hence, the first step is to use this to get a bound on the differences between the eigenvectors in terms of $\|U_1-U_2\|$.

An important observation that will be used later is that
\begin{equation}
\label{proof.l6.corrected}\|U_1-U_2\|=O_{hp}\left((N\vep)^{-2}\|Y_1\|\right)\,.
\end{equation}
The second claim of Lemma \ref{l.y1} implies that the right hand side above is $o_{hp}(1)$. The same implies that for $m=1,2$ and $1\le j,l\le k$,
\begin{equation}
\label{proof.l6.eq3}U_m(j,l)=(\theta_j-\theta_i)\one(j=l)+o_{hp}(1)\,,N\to\infty\,.
\end{equation}
In other words, as $N\to\infty$, $U_1$ and $U_2$ converge to $\diag(\theta_1-\theta_i,\ldots,\theta_k-\theta_i)$ w.h.p. Therefore,
\begin{equation}
\label{proof.l6.eq4}\lambda_i(U_m)=o_{hp}(1)\,,m=1,2\,.
\end{equation}

Let $\tilde U_m$, for $m=1,2,$ be the $(k-1)\times(k-1)$ matrix (recall that $k\ge2$) obtained by deleting the $i$-th row and the $i$-th column of $U_m$, and let $\tilde u_m$ be the $(k-1)\times1$ vector obtained from the $i$-th column of $U_m$ by deleting its $i$-th entry. It is worth recording, for possible future use, that
\begin{equation}
\label{proof.l6.eq10}\|\tilde u_m\|=o_{hp}(1)\,,m=1,2\,,
\end{equation}
which follows from \eqref{proof.l6.eq3}, and that
\begin{equation}
\label{proof.l6.eq11}\|\tilde u_1-\tilde u_2\|=O_{hp}\left((N\vep)^{-2}\|Y_1\|\right)\,,
\end{equation}
follows from \eqref{proof.l6.corrected}.

Equations \eqref{proof.l6.eq3} and \eqref{proof.l6.eq4} imply that $\tilde U_m-\lambda_i(U_m)I_{k-1}$ converges w.h.p.\ to
\[
\diag(\theta_1-\theta_i,\ldots,\theta_{i-1}-\theta_i,\theta_{i+1}-\theta_i,\theta_k-\theta_i)\,.
\]
Since $i\in\cal I$, the above matrix is invertible. Fix $\delta>0$ such that every matrix in the closed $\delta$-neighborhood $B_\delta$, in the sense of operator norm, of the above matrix is invertible. Let 
\begin{equation}
\label{proof.l6.c4}C_4=\sup_{E\in B_\delta}\|E^{-1}\|\,.
\end{equation}
Then, $C_4<\infty$. Besides, there exists $C_5<\infty$ satisfying
\begin{equation}
\label{proof.l6.c5}\left\|E_1^{-1}-E_2^{-1}\right\|\le C_5\|E_1-E_2\|\,,E_1,E_2\in B_\delta\,.
\end{equation}

Fix $N\ge1$ and a sample point such that  $\tilde U_m-\lambda_i(U_m)I_{k-1}$ belongs to $B_\delta$. Then, it is invertible. Define a $(k-1)\times1$ vector
\[
\tilde v_m=-\left[\tilde U_m-\lambda_i(U_m)I_{k-1}\right]^{-1}\tilde u_m\,,m=1,2\,,
\]
and a $k\times1$ vector
\[
v_m=\left[\tilde v_m(1),\ldots,\tilde v_m(i-1),\,1,\,\tilde v_m(i),\ldots,\tilde v_m(k-1)\right]^\prime\,,m=1,2\,.
\]
It is immediate that
\begin{equation}
\label{proof.l6.eq5}\|\tilde v_m\|\le C_4\|\tilde u_m\|\,,m=1,2\,.
\end{equation}

Our next claim is that
\begin{equation}
\label{proof.l6.eq6}U_mv_m=\lambda_i(U_m)v_m\,,m=1,2\,.
\end{equation}
This claim is equivalent to
\begin{equation}
\label{proof.l6.eq7}\left[U_m-\lambda_i(U_m)I_k\right]v_m=0\,.
\end{equation}
Let $\bar U_m$ be the $(k-1)\times k$ matrix obtained by deleting the $i$-th row of $U_m-\lambda_i(U_m)I_k$. Since the latter matrix is singular, and $\tilde U_m-\lambda_i(U_m)I_{k-1}$ is invertible, it follows that the $i$-th row of $U_m-\lambda_i(U_m)I_k$ lies in the row space of $\bar U_m$. In other words, the row spaces of $U_m-\lambda_i(U_m)I_k$ and $\bar U_m$ are the same, and so do their null spaces. Thus, \eqref{proof.l6.eq7} is equivalent to
\[
\bar U_mv_m=0\,.
\]
To see the above, observe that the $i$-th column of $\bar U_m$ is $\tilde u_m$, and hence we can partition
\[
\bar U_m=\left[\bar U_{m1}\,\tilde u_m\,\bar U_{m2}\right]\,,
\]
where $\bar U_{m1}$ and $\bar U_{m2}$ are of order $(k-1)\times(i-1)$ and $(k-1)\times(k-i)$, respectively. Furthermore,
\[
\left[\bar U_{m1}\,\bar U_{m2}\right]=\tilde U_m-\lambda_i(U_m)I_{k-1}\,.
\]
Therefore,
\[
\bar U_mv_m=\tilde u_m+\left[\bar U_{m1}\,\bar U_{m2}\right]\tilde v_m=\tilde u_m+\left(\tilde U_m-\lambda_i(U_m)I_{k-1}\right)\tilde v_m=0\,.
\]
Hence, \eqref{proof.l6.eq7} follows, which proves \eqref{proof.l6.eq6}. 

Next, we note
\begin{align*}
&\|v_1-v_2\|\\
=&\|\tilde v_1-\tilde v_2\|\\
\le&\left\|\left(\tilde U_1-\lambda_i(U_1)I_{k-1}\right)^{-1}\right\|\|\tilde u_1-\tilde u_2\|\\
&\,\,\,\,\,+\left\|\left(\tilde U_1-\lambda_i(U_1)I_{k-1}\right)^{-1}-\left(\tilde U_2-\lambda_i(U_2)I_{k-1}\right)^{-1}\right\|\|\tilde u_2\|\\
\le&C_4\|\tilde u_1-\tilde u_2\|+C_5\left\|\left(\tilde U_1-\lambda_i(U_1)I_{k-1}\right)-\left(\tilde U_2-\lambda_i(U_2)I_{k-1}\right)\right\|\|\tilde u_2\|\,,
\end{align*}
$C_4$ and $C_5$ being as in \eqref{proof.l6.c4} and \eqref{proof.l6.c5}, respectively. Recalling that the above calculation was done on an event of high probability, what we have proven, with the help of \eqref{proof.l6.corrected} and \eqref{proof.l6.eq11}, is that
\begin{align*}
\|v_1-v_2\|=O_{hp}\left((N\vep)^{-2}\|Y_1\|\right)\,.
\end{align*}
Furthermore, \eqref{proof.l6.eq10} and \eqref{proof.l6.eq5} imply that
\[
\|\tilde v_m\|=o_{hp}(1)\,.
\]

Finally, noting that 
\[
U_m(i,i)=0\,,m=1,2\,,
\]
and that
\[
v_m(i)=1\,,m=1,2\,,
\]
it follows that
\begin{align*}
&\left|\lambda_i(U_1)-\lambda_i(U_2)\right|\\
=&\left|\sum_{1\le j\le k,\,j\neq i}U_1(i,j)v_1(j)-\sum_{1\le j\le k,\,j\neq i}U_2(i,j)v_2(j)\right|\\
\le&\sum_{1\le j\le k,\,j\neq i}|U_1(i,j)||v_1(j)-v_2(j)|+\sum_{1\le j\le k,\,j\neq i}|U_1(i,j)-U_2(i,j)|v_2(j)|\\
=&O_{hp}\left(\|\tilde u_1\|\|v_1-v_2\|+\|U_1-U_2\|\|\tilde v_2\|\right)\\
=&o_{hp}\left((N\vep)^{-2}\|Y_1\|\right)\,.
\end{align*}
Recalling \eqref{proof.l6.u1} and \eqref{proof.l6.u2}, \eqref{proof.l6.eq2} follows, which completes the proof in conjunction with \eqref{proof.l6.eq1}.
\end{proof}

Now, we are in a position to prove Theorem \ref{t.secorder}.

\begin{proof}[Proof of Theorem \ref{t.secorder}]
Recalling that 
\[
Y_1(i,i)=\theta_iN\vep\,e_i^\prime We_i\,,
\]
it suffices to show that
\begin{equation}
\label{t.secorder.main}\mu-\E(\mu)=\mu^{-1}Y_1(i,i)+o_p(\sqrt\vep)\,.
\end{equation}
Lemma \ref{proof.l6} implies that 
\begin{align}
\nonumber\mu-\bar\mu=&\mu^{-1}Y_1(i,i)+o_{hp}\left((N\vep)^{-1}\|Y_1\|+\sqrt\vep\right)\\
\label{t.secorder.eq-1}=&O_{hp}\left((N\vep)^{-1}\|Y_1\|+\sqrt\vep\right)\,,
\end{align}
a consequence of which, combined with Lemma \ref{l.y1},  is that
\begin{equation}
\label{eq.boundmubar}\lim_{N\to\infty}(N\vep)^{-1}\bar\mu=\theta_i\,.
\end{equation}
Thus,
\begin{align}
\nonumber\left|\frac1{\bar\mu}Y_1(i,i)-\frac1{\mu}Y_1(i,i)\right|=&O_{hp}\left((N\vep)^{-2}|\mu-\bar\mu|\|Y_1\|\right)\\
\nonumber=&o_{hp}\left(|\mu-\bar\mu|\right)\\
\nonumber=&o_{hp}\left((N\vep)^{-1}\|Y_1\|+\sqrt\vep\right)\\
=&o_p(\sqrt\vep)\,,\label{t.secorder.eq0}
\end{align}
Lemma \ref{l.y1} implying the second line, the third line following from \eqref{t.secorder.eq-1} and the fact that
\begin{equation}
\label{eq.normY1}\|Y_1\|=O_p\left(N\vep^{3/2}\right)\,,
\end{equation}
which is also a consequence of the former lemma, being used for the last line. Using Lemma \ref{proof.l6} once again, we get that
\begin{equation}
\label{t.secorder.eq1}
\mu=\bar\mu+\frac1{\bar\mu}Y_1(i,i)+o_{hp}\left((N\vep)^{-1}\|Y_1\|+\sqrt\vep\right)\,.
\end{equation}

Let
\[
R=\mu-\bar\mu-\frac1{\bar\mu}Y_1(i,i)\,.
\]
Clearly,
\[
\E(R)=\E(\mu)-\bar\mu\,,
\]
and \eqref{t.secorder.eq1} implies that for $\delta>0$ there exists $\eta>1$ with
\begin{align*}
\E|R|
\le\delta\left(\sqrt\vep+(N\vep)^{-1}\E\|Y_1\|\right)+\E^{1/2}\left(\mu-\bar\mu-\frac1{\bar\mu}Y_1(i,i)\right)^2O\left(e^{-(\log N)^\eta}\right)\,.
\end{align*}
Lemma \ref{l.y1} implies that
\[
\E|R|\le o(\sqrt\vep)+\E^{1/2}\left(\mu-\bar\mu-\frac1{\bar\mu}Y_1(i,i)\right)^2O\left(e^{-(\log N)^\eta}\right)\,.
\]

Next, \eqref{eq.boundmubar} and that $|\mu|\le N^2$ a.s.\ imply that
\begin{align*}
\E^{1/2}\left(\mu-\bar\mu-\frac1{\bar\mu}Y_1(i,i)\right)^2
=&O\left(N^2\right)\\
=&o\left(\vep^{1/2}N^3\right)\\
=&o\left(\vep^{1/2}e^{(\log N)^\eta}\right)\,.
\end{align*}
Thus,
\[
\E|R|=o(\sqrt\vep)\,,
\]
and hence
\[
\E(\mu)=\bar\mu+o(\sqrt\vep)\,.
\]
This, in view of \eqref{t.secorder.eq1}, implies that
\begin{align*}
\mu=&\E(\mu)+\frac1{\bar\mu}Y_1(i,i)+o_p\left((N\vep)^{-1}\|Y_1\|+\sqrt\vep\right)\\
=&\E(\mu)+\frac1{\bar\mu}Y_1(i,i)+o_p\left(\sqrt\vep\right)\,,
\end{align*}
the second line following from \eqref{eq.normY1}. This establishes \eqref{t.secorder.main} with the help of \eqref{t.secorder.eq0}, and hence the proof.
\end{proof}

\subsection{Proof of Theorem \ref{t.main}}
Theorems \ref{t.inprob} and \ref{t.secorder} establish Theorem \ref{t.main} with the help of \eqref{eq.clt}. 

\subsection{Proof of Theorem \ref{t.mean}}
Now we shall proceed toward proving Theorem \ref{t.mean}. For the rest of this section, that is, this subsection and the subsequent two, Assumption F2.\ holds. In other words, $r_1,\ldots,r_k$ are assumed to be Lipschitz continuous and hence so is $f$.

The following lemma essentially proves Theorem \ref{t.mean}.

\begin{lemma}\label{proof.approx}
Under Assumptions E1.\ and F2.,
\[
\mu=\lambda_i\left(Y_0+(N\vep\theta_i)^{-2}\E(Y_2)\right)+O_p\left(\sqrt\vep+(N\vep)^{-1}\right)\,.
\]
\end{lemma}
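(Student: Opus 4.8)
The plan is to start from the reduction already available in Lemma \ref{proof.l3},
\[
\mu=\lambda_i\Bigl(Y_0+\mu^{-1}Y_1+\sum_{n=2}^L\mu^{-n}\E(Y_n)\Bigr)+o_{hp}(\sqrt\vep)\,,
\]
and to strip away everything except $Y_0+\mu^{-2}\E(Y_2)$, finally replacing $\mu$ by $N\vep\theta_i$ inside the surviving correction. First I would discard $\mu^{-1}Y_1$: since $\|Y_1\|=O_p(N\vep^{3/2})$ by Lemma \ref{l.y1} and $\mu=N\vep\theta_i(1+o_{hp}(1))$ by Theorem \ref{t.inprob}, one has $\|\mu^{-1}Y_1\|=O_p(\sqrt\vep)$, so by Weyl's inequality it changes $\lambda_i$ by only $O_p(\sqrt\vep)$. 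Next I would discard the tail $\sum_{n=3}^L\mu^{-n}\E(Y_n)$: using again $\mu=N\vep\theta_i(1+o_{hp}(1))$ together with $\|\E(Y_3)\|=O((N\vep)^2)$, which comes from the sharp bound $\E(e_j^\prime W^3e_l)=O(N\vep)$ of \eqref{est.l4.eq2}, and $\|\E(Y_n)\|\le(C_3N\vep)^{n/2+1}$ for $4\le n\le L$ (equation \eqref{proof.l4.eq5}), a geometric-series estimate gives $\|\sum_{n=3}^L\mu^{-n}\E(Y_n)\|=O_{hp}((N\vep)^{-1})$. Hence, by Weyl's inequality,
\[
\mu=\lambda_i\bigl(Y_0+\mu^{-2}\E(Y_2)\bigr)+O_p\bigl(\sqrt\vep+(N\vep)^{-1}\bigr)\,.
\]

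The remaining task is to replace $\mu^{-2}$ by $(N\vep\theta_i)^{-2}$ in the correction. The subtlety is that $\|\mu^{-2}\E(Y_2)\|$ is only of order $O_{hp}(1)$ — since $\|\E(Y_2)\|=O((N\vep)^2)$ by \eqref{proof.l4.eq5} — so the correction is genuinely of constant order, namely the term $(N\vep\theta_i)^{-2}\E(Y_2)$ in the statement, and the replacement costs an error $|\mu^{-2}-(N\vep\theta_i)^{-2}|\cdot\|\E(Y_2)\|$ that is small enough only if $|\mu-N\vep\theta_i|=O_p(1)$. I would get this $O(1)$-localization by a bootstrap from the displayed identity above: combined with $\lambda_i(Y_0)=N\vep\theta_i+O(\vep)$ — which uses Assumption F2 via $e_j^\prime e_l=\one(j=l)+O(N^{-1})$ — and $\|\mu^{-2}\E(Y_2)\|=O_{hp}(1)$, it gives $\mu=N\vep\theta_i+O_p(1)$. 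Feeding this back, $|\mu^{-2}-(N\vep\theta_i)^{-2}|=O_p((N\vep)^{-3})$, so $\|\mu^{-2}\E(Y_2)-(N\vep\theta_i)^{-2}\E(Y_2)\|=O_p((N\vep)^{-1})$, and a final application of Weyl's inequality yields the claim.

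The two genuinely delicate points are exactly those highlighted above: (i) the $n=3$ term must be handled with the sharp estimate $\E(e_j^\prime W^3e_l)=O(N\vep)$ from \eqref{est.l4.eq2} and not with the naive $O((N\vep)^{3/2})$ of Lemma \ref{est.l2}, since the naive bound would leave an error of order $(N\vep)^{-1/2}$, which is larger than $(N\vep)^{-1}$; and (ii) since the correction $\mu^{-2}\E(Y_2)$ is not negligible, replacing $\mu$ by $N\vep\theta_i$ inside it is legitimate only after the $O(1)$-localization of $\mu$, and that localization must itself be bootstrapped out of the identity. The remaining ingredients — the geometric-series bounds and the invocations of Weyl's inequality — are routine.
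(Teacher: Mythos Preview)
Your proposal is correct and follows essentially the same approach as the paper's proof: start from Lemma \ref{proof.l3}, peel off $\mu^{-1}Y_1$ via \eqref{eq.normY1}, peel off the $n\ge3$ terms via \eqref{est.l4.eq2} for $n=3$ and \eqref{proof.l4.eq5} for $n\ge4$, then bootstrap the localization $\mu=N\vep\theta_i+O_p(1)$ using the Lipschitz assumption F2 (through $e_j^\prime e_l=\one(j=l)+O(N^{-1})$) to justify the final replacement of $\mu^{-2}$ by $(N\vep\theta_i)^{-2}$. The only cosmetic difference is that the paper uses the Gershgorin bound (Fact \ref{f.gct}) rather than Weyl's inequality to deduce $\lambda_i(Y_0)=N\vep\theta_i+O(\vep)$, but either works.
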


\begin{proof}
Lemma \ref{proof.l3} implies that
\[
\mu=\lambda_i\left(\sum_{n=0}^3\mu^{-n}\E(Y_n)\right)+O_p\left(\mu^{-1}\|Y_1\|+\sum_{n=4}^L\mu^{-n}\|\E(Y_n)\|\right)+o_p(\sqrt\vep)\,.
\]
Equation \eqref{eq.normY1} implies that
\[
\mu=\lambda_i\left(\sum_{n=0}^3\mu^{-n}\E(Y_n)\right)+O_p\left(\sqrt\vep+\sum_{n=4}^L\mu^{-n}\|\E(Y_n)\|\right)\,.
\]
From \eqref{proof.l4.eq5},  it follows that
\[
\sum_{n=4}^L\mu^{-n}\|\E(Y_n)\|=O_p\left((N\vep)^{-1}\right)\,,
\]
and hence
\begin{equation}
\label{proof.approx.eq0}\mu=\lambda_i\left(\sum_{n=0}^3\mu^{-n}\E(Y_n)\right)+O_p\left(\sqrt\vep+(N\vep)^{-1}\right)\,.
\end{equation}

Lemma \ref{est.l4}, in particular \eqref{est.l4.eq2} therein, implies that
\[
\|\E(Y_3)\|=O\left((N\vep)^2\right)\,,
\]
and hence
\[
\mu^{-3}\|\E(Y_3)\|=O_p\left((N\vep)^{-1}\right)\,.
\]
This, in conjunction with \eqref{proof.approx.eq0}, implies that
\begin{equation}
\label{proof.approx.eq1}\mu=\lambda_i\left(Y_0+\mu^{-2}\E(Y_2)\right)+O_p\left(\sqrt\vep+(N\vep)^{-1}\right)\,.
\end{equation}

An immediate consequence of the above and \eqref{proof.l4.eq5} is that
\begin{align}\label{proof.approx.eq2}
\mu=\lambda_i(Y_0)+O_p(1)\,.
\end{align}

Applying Fact \ref{f.gct} as in the proof of Lemma \ref{proof.l1}, it can be shown that
\begin{equation}
\label{proof.approx.eq3}\left|\lambda_i(Y_0)-Y_0(i,i)\right|\le\sum_{1\le j\le k,\,j\neq i}|Y_0(i,j)|\,.
\end{equation}
Since $r_i$ and $r_j$ are Lipschitz functions, it holds that
\[
e_i^\prime e_j=\one(i=j)+O\left(N^{-1}\right)\,.
\]
Hence, it follows that
\[
Y_0(i,i)=N\vep\left(\theta_i+O(N^{-1})\right)=N\vep\theta_i+O(\vep)\,,
\]
and similarly,
\[
Y_0(i,j)=O(\vep)\,,j\neq i\,.
\]
Combining these findings with \eqref{proof.approx.eq3} yields that
\begin{equation}
\label{proof.approx.eq4}\lambda_i(Y_0)=N\vep\theta_i+O(\vep)\,.
\end{equation}

Equations \eqref{proof.approx.eq2} and \eqref{proof.approx.eq4} together imply that
\begin{equation}
\label{proof.approx.eq5}\mu=N\vep\theta_i+O_p(1)\,.
\end{equation}
Therefore,
\begin{align*}
&\left\|\mu^{-2}\E(Y_2)-(N\vep\theta_i)^{-2}\E(Y_2)\right\|\\
=&O_p\left((N\vep)^{-3}\|\E(Y_2)\|\right)\\
=&O_p\left((N\vep)^{-1}\right)\,.
\end{align*}
This in conjunction with \eqref{proof.approx.eq1} completes the proof.
\end{proof}

Theorem \ref{t.mean} is a simple corollary of the above lemma, as shown below.

\begin{proof}[Proof of Theorem \ref{t.mean}]
A consequence of Theorem \ref{t.secorder} is that
\[
\mu-\E(\mu)=O_p(\sqrt\vep)\,.
\]
The claim of Lemma \ref{proof.approx} is equivalent to
\[
\lambda_i(B)-\mu=O_p\left(\sqrt\vep+(N\vep)^{-1}\right)\,.
\]
The proof follows by adding the two equations, and noting that $B$ is a deterministic matrix.
\end{proof}

\subsection{Proof of Theorem \ref{t.eigvec}}
Next we proceed towards the proof of Theorem \ref{t.eigvec}, for which the following lemma will be useful. 

\begin{lemma}\label{proof.l8}
Under Assumptions E1.\ and F2., as $N\to\infty$,
\[
e_j^\prime\left(I-\mu^{-1}W\right)^{-n}e_l=\one(j=l)+O_p\left((N\vep)^{-1}\right)\,,1\le j,l\le k\,,n=1,2\,.
\]
\end{lemma}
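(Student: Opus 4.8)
The plan is to isolate the two lowest-order terms in the Neumann expansion of the resolvent and to control the remainder crudely by operator norms, using only Lemma~\ref{est.l1}, Lemma~\ref{est.l4} and Theorem~\ref{t.inprob}. Throughout one works on the event $\{\|W\|<\mu\}$, which holds w.h.p.\ by Lemma~\ref{est.l1} and Theorem~\ref{t.inprob}; indeed $\mu^{-1}\|W\|=O_{hp}((N\vep)^{-1/2})=o_{hp}(1)$, so on a further high-probability event $\mu^{-1}\|W\|\le1/2$, which makes $(I-\mu^{-1}W)^{-n}$ well defined with $\|(I-\mu^{-1}W)^{-n}\|\le2^n$. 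Off this event one merely bounds the relevant probability, which is enough for an $O_p$ conclusion.

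For $n=1$, since $W$ commutes with $(I-\mu^{-1}W)^{-1}$, I would write
\[
(I-\mu^{-1}W)^{-1}=I+\mu^{-1}W+\mu^{-2}W^2(I-\mu^{-1}W)^{-1},
\]
so that
\[
e_j^\prime(I-\mu^{-1}W)^{-1}e_l=e_j^\prime e_l+\mu^{-1}e_j^\prime We_l+\mu^{-2}e_j^\prime W^2(I-\mu^{-1}W)^{-1}e_l,
\]
and then estimate the three contributions separately. First, because $r_j,r_l$ are Lipschitz (Assumption F2), the Riemann sum $e_j^\prime e_l=N^{-1}\sum_{t=1}^N r_j(t/N)r_l(t/N)$ differs from $\langle r_j,r_l\rangle_{L^2[0,1]}=\one(j=l)$ by $O(N^{-1})$, which is $O((N\vep)^{-1})$ since $\vep\le M^{-1}$. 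Second, $\E(e_j^\prime We_l)=0$ and $\Var(e_j^\prime We_l)=O(\vep)$ by Lemma~\ref{est.l4}, hence $e_j^\prime We_l=O_p(\sqrt\vep)$, while $\mu^{-1}=O_{hp}((N\vep)^{-1})$ by Theorem~\ref{t.inprob}, giving $\mu^{-1}e_j^\prime We_l=O_p((N\vep)^{-1})$. Third, $\|e_j\|,\|e_l\|=1+O(N^{-1})$, $\|(I-\mu^{-1}W)^{-1}\|=O_{hp}(1)$, and $\mu^{-2}\|W\|^2=O_{hp}((N\vep)^{-2}\cdot N\vep)=O_{hp}((N\vep)^{-1})$ by Lemma~\ref{est.l1} and Theorem~\ref{t.inprob}, so the last term is $O_{hp}((N\vep)^{-1})$. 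Adding up proves the case $n=1$.

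For $n=2$ the same strategy applies via the operator identity, obtained by matching the power series of $(1-t)^{-2}$ and legitimate since $\mu^{-1}\|W\|<1$,
\[
(I-\mu^{-1}W)^{-2}=I+2\mu^{-1}W+\mu^{-2}W^2\bigl(3I-2\mu^{-1}W\bigr)(I-\mu^{-1}W)^{-2}.
\]
The first two terms are handled exactly as before (the factor $2$ is harmless), and the remainder is $O_{hp}((N\vep)^{-1})$ because $\mu^{-2}\|W\|^2=O_{hp}((N\vep)^{-1})$, $\|3I-2\mu^{-1}W\|=O_{hp}(1)$ and $\|(I-\mu^{-1}W)^{-2}\|=O_{hp}(1)$. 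It is worth noting that treating $n=2$ directly, rather than squaring the $n=1$ estimate, is essential: the latter route would require controlling $e_j^\prime(I-\mu^{-1}W)^{-1}$ as an $N$-vector, not just its inner products with $e_1,\ldots,e_k$.

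I do not expect a genuine obstacle here. The only points that demand a little care are the bookkeeping around the event $\{\|W\|\ge\mu\}$, where the resolvent may fail to exist, so that every bound must be phrased probabilistically in order to conclude $O_p$ cleanly; and the mixing of the high-probability lower bound $\mu\ge cN\vep$ from Theorem~\ref{t.inprob} with the in-probability bound $e_j^\prime We_l=O_p(\sqrt\vep)$ from Lemma~\ref{est.l4} when estimating the linear term $\mu^{-1}e_j^\prime We_l$.
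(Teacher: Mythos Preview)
Your proof is correct and follows essentially the same approach as the paper: expand $(I-\mu^{-1}W)^{-n}=I+n\mu^{-1}W+O_p(\mu^{-2}\|W\|^2)$, then control $e_j^\prime e_l$ via the Lipschitz assumption F2, $\mu^{-1}e_j^\prime We_l$ via Lemma~\ref{est.l4} and Theorem~\ref{t.inprob}, and the quadratic remainder by operator norms via Lemma~\ref{est.l1}. Your write-up is simply more explicit than the paper's sketch (which defers to the argument of Lemma~\ref{proof.approx}), and your exact remainder identities for $n=1,2$ are a clean way to make the $O_p(\mu^{-2}\|W\|^2)$ bound rigorous.
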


\begin{proof}
For a fixed $n=1,2,$ expand
\[
\left(I-\mu^{-1}W\right)^{-n}=
I+n\mu^{-1}W+O_p\left(\mu^{-2}\|W\|^2\right)\,.
\]
The proof can be completed by proceeding along similar lines as in the proof of Lemma \ref{proof.approx}.
\end{proof}

Now we are in a position to prove Theorem \ref{t.eigvec}.

\begin{proof}[Proof of Theorem \ref{t.eigvec}]
Theorem \ref{t.inprob} implies that \eqref{t.eigvec.claim1} holds for any $i\in\cal I$. Fix such an $i$, denote
\[
\mu=\lambda_i(A)\,,
\] 
and let $v$ be an eigenvector of $A$, having norm $1$, corresponding to $\mu$ satisfying \eqref{t.eigenvec.align}.  In \eqref{t.eigenvec.eq3} below it is shown that $v$ is uniquely determined with high probability.

Fix $k\ge2$, and $j\in\{1,\ldots,k\}\setminus\{i\}$.  Premultiplying \eqref{eq.master} by $e_j^\prime$ yields that
\begin{equation}\label{t.eigvec.eq1}
e_j^\prime v=N\vep\sum_{l=1}^k\theta_l(e_l^\prime v)e_j^\prime\left(\mu I-W\right)^{-1}e_l\,,\text{ w.h.p.}
\end{equation}
Therefore,
\begin{align*}
\nonumber&e_j^\prime v\left(1-\theta_j\frac{N\vep}\mu e_j^\prime\left(I-\mu^{-1}W\right)^{-1}e_j\right)\\
=&\frac{N\vep}\mu\sum_{1\le l\le k,\,l\neq j}\theta_l(e_l^\prime v)e_j^\prime\left(I-\mu^{-1}W\right)^{-1}e_l\,,\text{ w.h.p.}
\end{align*}
Lemma \ref{proof.l8} implies that as $N\to\infty$, 
\[
1-\theta_j\frac{N\vep}\mu e_j^\prime\left(I-\mu^{-1}W\right)^{-1}e_j\prob1-\frac{\theta_j}{\theta_i}\neq0\,.
\]
Therefore,
\begin{align*}
e_j^\prime v=&O_p\left(\frac{N\vep}\mu\sum_{1\le l\le k,\,l\neq j}\theta_l(e_l^\prime v)e_j^\prime\left(I-\mu^{-1}W\right)^{-1}e_l\right)\\
=&O_p\left(\sum_{1\le l\le k,\,l\neq j}\left|e_j^\prime\left(I-\mu^{-1}W\right)^{-1}e_l\right|\right)\\
=&O_p\left((N\vep)^{-1}\right)\,,
\end{align*}
the last line being another consequence of Lemma \ref{proof.l8}. Thus, \eqref{t.eigvec.claim3} holds. 

Premultiplying \eqref{eq.master} with its own transpose and using the fact $v'v=1$, we get that
\[
1=(N\vep)^2\sum_{l,m=1}^k\theta_l\theta_m(e_l^\prime v)(e_m^\prime v)e_l^\prime\left(\mu I-W\right)^{-2}e_m\,,
\]
that is,
\begin{align}
\label{t.eigvec.eq2}&\theta_i^2(e_i^\prime v)^2e_i^\prime\left(I-\mu^{-1}W\right)^{-2}e_i\\
\nonumber=&(N\vep)^{-2}\mu^2-\sum_{(l,m)\in\{1,\ldots,k\}^2\setminus\{(i,i)\}}\theta_l\theta_m(e_l^\prime v)(e_m^\prime v)e_l^\prime\left(I-\mu^{-1} W\right)^{-2}e_m\,.
\end{align}

Using Lemma \ref{proof.l8} once again, it follows that 
\[
e_i^\prime\left(I-\mu^{-1}W\right)^{-2}e_i=1+O_p\left((N\vep)^{-1}\right)\,.
\]
As $N\to\infty$, the right hand side of \eqref{t.eigvec.eq2} converges in probability to $\theta_i^2$. This along with \eqref{t.eigenvec.align} shows that
\begin{equation}\label{t.eigenvec.eq3}
e_i^\prime v\prob1\,.
\end{equation}
An immediate consequence of \eqref{t.eigvec.claim1} and the above is that the eigenvector $v$ satisfying \eqref{t.eigenvec.align} is uniquely determined with high probability.  It should be noted that if the inequality in \eqref{t.eigenvec.align} were reversed, then $-1$ would have been the limit in probability in \eqref{t.eigenvec.eq3}.

In view of \eqref{t.eigvec.eq2} and \eqref{t.eigenvec.eq3},  \eqref{t.eigvec.claim2} would follow once it's shown that 
\begin{equation}
\label{t.eigvec.eq4}(N\vep)^{-2}\mu^2=\theta_i^2+O_p\left((N\vep)^{-1}\right)\,,
\end{equation}
and that for all $(l,m)\in\{1,\ldots,k\}^2\setminus\{(i,i)\}$,
\begin{equation}
\label{t.eigvec.eq5}(e_l^\prime v)(e_m^\prime v)e_l^\prime\left(I-\mu^{-1} W\right)^{-2}e_m=O_p\left((N\vep)^{-1}\right)\,.
\end{equation}

Equation \eqref{t.eigvec.eq4} is a trivial consequence of \eqref{proof.approx.eq5}. For \eqref{t.eigvec.eq5}, assuming without loss of generality that $l\neq i$, \eqref{t.eigvec.claim3} implies that
\begin{align*}
\left|(e_l^\prime v)(e_m^\prime v)e_l^\prime\left(I-\mu^{-1} W\right)^{-2}e_m\right|=&\left|(e_m^\prime v)e_l^\prime\left(I-\mu^{-1} W\right)^{-2}e_m\right|O_p\left((N\vep)^{-1}\right)\\
\le&\left|e_l^\prime\left(I-\mu^{-1} W\right)^{-2}e_m\right|O_p\left((N\vep)^{-1}\right)\\
=&O_p\left((N\vep)^{-1}\right)\,,
\end{align*}
the last line following from Lemma \ref{proof.l8}. Thus, \eqref{t.eigvec.eq5} follows, which in conjunction with \eqref{t.eigvec.eq4} establishes \eqref{t.eigvec.claim2}. This completes the proof.
\end{proof}

\subsection{Proof of Theorem \ref{t.flac}}
Finally, Theorem \ref{t.flac} is proved below, based on Assumptions E2.\ and F2.

\begin{proof}[Proof of Theorem \ref{t.flac}] 
Fix $i\in\cal I$. Recall \eqref{proof.l1.eqnew1} and \eqref{proof.l1.eqnew2}, and let $u$ be as defined in the former. Let $\tilde u$ be the column vector obtained by deleting the $i$-th entry of $u$, $\tilde V_i$ be the column vector obtained by deleting the $i$-th entry of the $i$-th column of $V$, and $\tilde V$ be the $(k-1)\times(k-1)$ matrix obtained by deleting the $i$-th row and $i$-th column of $V$. Then, \eqref{proof.l1.eqnew2} implies that
\begin{equation}
\label{t.flac.eq3}
\mu\tilde u=\tilde V\tilde u+u(i)\tilde V_i\,,\text{ w.h.p.}
\end{equation}
Lemma \ref{proof.l0} implies that 
\[
\left\|I_{k}-\mu^{-1}V-\diag\left(1-\frac{\theta_1}{\theta_i},\ldots,1-\frac{\theta_k}{\theta_i}\right)\right\|=o_{hp}(1)\,,
\]
and hence $I_{k-1}-\mu^{-1}\tilde V$ is non-singular w.h.p. Thus, \eqref{t.flac.eq3} implies that
\begin{equation}
\label{t.flac.eq2}\tilde u=u(i)\mu^{-1}\left(I_{k-1}-\mu^{-1}\tilde V\right)^{-1}\tilde V_i\,,\text{ w.h.p.}
\end{equation}

The next step is to show that
\begin{equation}
\label{t.flac.eq4}\left\|\mu^{-1}V-\diag\left(\frac{\theta_1}{\theta_i},\ldots,\frac{\theta_k}{\theta_i}\right)\right\|=o_{p}\left(\sqrt\vep\right)\,.
\end{equation}
To see this, use the fact that $f$ is Lipschitz to write for a fixed $1\le j,l\le k$,
\begin{align}
\nonumber V(j,l)=&N\vep\sqrt{\theta_j\theta_l}\left(e_j^\prime e_l+\mu^{-1}e_j^\prime We_l+O_p\left(\mu^{-2}\|W\|^2\right)\right)\\
\nonumber =&N\vep\sqrt{\theta_j\theta_l}\left(e_j^\prime e_l+O_p\left((N\vep)^{-1}\right)\right)\\
\nonumber =&N\vep\theta_j\left(\one(j=l)+O_p\left((N\vep)^{-1}\right)\right)\\
\label{t.flac.eqnew}=&N\vep\theta_j\left(\one(j=l)+o_p\left(\sqrt\vep\right)\right)\,,
\end{align}
the last line following from the fact that 
\begin{equation}
\label{t.flac.eq0}(N\vep)^{-1}=o\left(\sqrt\vep\right)\,,
\end{equation}
which is a consequence of \eqref{t.flac.eq1}. This along with  \eqref{proof.approx.eq5} implies that
\begin{equation}
\label{t.flac.eq6}(N\vep\theta_i)^{-1}\mu=1+o_p\left(\sqrt\vep\right)\,.
\end{equation}
Combining this with \eqref{t.flac.eqnew} yields that
\[
\mu^{-1}V(j,l)=\theta_i^{-1}\theta_j\one(j=l)+o_p\left(\sqrt\vep\right)\,.
\]
Thus, \eqref{t.flac.eq4} follows, an immediate consequence of which is that
\begin{equation}
\label{t.flac.eq5}\left\|\left(I_{k-1}-\mu^{-1}\tilde V\right)^{-1}-\tilde D\right\|=o_p\left(\sqrt\vep\right)\,,
\end{equation}
where
\[
\tilde D=\left[\diag\left(1-\frac{\theta_1}{\theta_i},\ldots,1-\frac{\theta_{i-1}}{\theta_i},1-\frac{\theta_{i+1}}{\theta_i},\ldots,1-\frac{\theta_k}{\theta_i}\right)\right]^{-1}\,.
\]

Next, fix $j\in\{1,\ldots,k\}\setminus\{i\}$. By similar arguments as above, it follows that
\begin{align*}
V(i,j)=&N\vep\sqrt{\theta_i\theta_j}\left(\sum_{n=0}^3\mu^{-n}e_i^\prime W^ne_j+O_p\left(\mu^{-4}\|W\|^4\right)\right)\\
=&N\vep\sqrt{\theta_i\theta_j}\sum_{n=0}^3\mu^{-n}e_i^\prime W^ne_j+O_p\left((N\vep)^{-1}\right)\\
=&N\vep\sqrt{\theta_i\theta_j}\sum_{n=1}^2\mu^{-n}e_i^\prime W^ne_j+o_p\left(\sqrt\vep\right)\,,
\end{align*}
using \eqref{t.flac.eq0} once again, because
\[
N\vep e_i^\prime e_j=O(\vep)=o\left(\sqrt\vep\right)\,,
\]
and 
\[
N\vep\mu^{-3} e_i^\prime W^3e_j=O_p\left((N\vep)^{-2}\E(e_i^\prime W^3e_j)\right)=o_p\left(\sqrt\vep\right)\,,
\]
by  \eqref{est.l4.eq2}. Thus,
\begin{align*}
V(i,j)-N\vep\sqrt{\theta_i\theta_j}\mu^{-1}e_i^\prime We_j
=&N\vep\sqrt{\theta_i\theta_j}\mu^{-2}e_i^\prime W^2e_j+o_p\left(\sqrt\vep\right)\\
=&N\vep\sqrt{\theta_i\theta_j}\mu^{-2}\E\left(e_i^\prime W^2e_j\right)+o_p\left(\sqrt\vep\right)\\
=&(N\vep)^{-1}\theta_j^{1/2}\theta_i^{-3/2}\E\left(e_i^\prime W^2e_j\right)+o_p\left(\sqrt\vep\right)\,,
\end{align*}
the second line following from Lemma \ref{est.l3}, and the last line from \eqref{t.flac.eq0}, \eqref{t.flac.eq6} and Lemma \ref{est.l2}. 
In particular,
\[
V(i,j)=O_p(1)\,.
\]

The above in conjunction with \eqref{t.flac.eq5} implies that 
\begin{align*}
&\left[\left(I_{k-1}-\mu^{-1}\tilde V\right)^{-1}\tilde V_i\right](j)\\
=&\left(1-\frac{\theta_j}{\theta_i}\right)^{-1}\sqrt{\theta_i\theta_j}\left[(N\vep)^{-1}\theta_i^{-2}\E\left(e_i^\prime W^2e_j\right)+N\vep\mu^{-1}e_i^\prime We_j\right]+o_p(\sqrt\vep)\,.
\end{align*}

In light of \eqref{t.flac.eq2}, the above means that
\begin{align*}
&e_j^\prime v\\
=&(e_i^\prime v)\mu^{-1}\left(1-\frac{\theta_j}{\theta_i}\right)^{-1}\left[(N\vep)^{-1}\theta_i^{-1}\E\left(e_i^\prime W^2e_j\right)+N\vep\theta_i\mu^{-1}e_i^\prime We_j+o_p(\sqrt\vep)\right]\\
=&\mu^{-1}\left(1-\frac{\theta_j}{\theta_i}\right)^{-1}\left[(N\vep)^{-1}\theta_i^{-1}\E\left(e_i^\prime W^2e_j\right)+N\vep\theta_i\mu^{-1}e_i^\prime We_j+o_p(\sqrt\vep)\right]\,,
\end{align*}
the last line following from \eqref{t.eigvec.claim2} and \eqref{t.flac.eq0}. Using \eqref{t.flac.eq6} once again yields that
\[
N\vep(e_j^\prime v)=\frac1{\theta_i-\theta_j}\left[(N\vep)^{-1}\theta_i^{-1}\E\left(e_i^\prime W^2e_j\right)+N\vep\theta_i\mu^{-1}e_i^\prime We_j\right]+o_p(\sqrt\vep)\,.
\]
This completes the proof.
\end{proof}

\section{Appendix}\label{sec:app} 

\begin{lemma}\label{lemma:RIeigenfunctions}
The eigenfunctions $\{r_i:1\le i\le k\}$ of the operator $I_f$ are Riemann integrable.
\end{lemma}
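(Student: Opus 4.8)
The plan is to exploit the finite-rank decomposition \eqref{eq.decomp} together with the fact that a Riemann integrable function on a compact set is bounded and has a discontinuity set of Lebesgue measure zero (by the Lebesgue criterion for Riemann integrability). First I would recall that, as an eigenfunction of the compact self-adjoint operator $I_f$ with nonzero eigenvalue $\theta_i>0$, each $r_i$ satisfies the fixed-point identity
\[
r_i(x)=\theta_i^{-1}\int_0^1 f(x,y)\,r_i(y)\,dy\qquad\text{for a.e. }x\in[0,1].
\]
Since the right-hand side is defined for \emph{every} $x$, there is no loss in taking this as the definition of $r_i$; I would then show directly that this particular representative is bounded and Riemann integrable. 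Boundedness is immediate: $|r_i(x)|\le \theta_i^{-1}\,\|f\|_\infty\,\|r_i\|_{L^1}\le \theta_i^{-1}\,\|f\|_\infty$, where $\|f\|_\infty=\sup_{x,y}f(x,y)<\infty$ by hypothesis.

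The heart of the argument is continuity of $x\mapsto \int_0^1 f(x,y)r_i(y)\,dy$ at every point of continuity of the section $y\mapsto$ \dots; more precisely, I would argue that this map is continuous at every $x_0$ such that $f(\cdot,\cdot)$ is continuous at $(x_0,y)$ for almost every $y$. By the Lebesgue criterion applied to $f$ on $[0,1]^2$, its discontinuity set $D$ has two-dimensional measure zero, so by Fubini the slice $D_{x_0}=\{y:(x_0,y)\in D\}$ has one-dimensional measure zero for a.e. $x_0$; at such an $x_0$, for a.e. $y$ we have $f(x,y)\to f(x_0,y)$ as $x\to x_0$, and since $|f(x,y)r_i(y)|\le \|f\|_\infty\|r_i\|_\infty$ is a fixed integrable dominating function, dominated convergence gives continuity of the integral at $x_0$. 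Hence the representative $r_i$ defined above is continuous at a.e. $x_0$, and being bounded it is therefore Riemann integrable by the Lebesgue criterion in one variable.

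An alternative, perhaps cleaner, route uses \eqref{eq.decomp} more directly: one first establishes the claim for the \emph{top} eigenfunction (or any eigenfunction lying in the range of $I_f$) by the integral-equation argument above with $f$ itself, and then, since every $r_i$ lies in the range of $I_f$ (as $\theta_i\ne 0$ forces $r_i=\theta_i^{-1}I_f r_i$), the same computation applies verbatim to each $i$. I would structure the write-up this way to avoid repeating the argument. The main obstacle — really the only subtlety — is the measure-theoretic bookkeeping needed to pass from ``$f$ has a null discontinuity set in $[0,1]^2$'' to ``for a.e. $x$, the section is continuous in $y$ off a null set,'' which requires Fubini applied to the indicator of $D$; everything else (the domination, the $L^\infty$ bound, the Lebesgue criterion) is routine. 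One should also note that modifying $r_i$ on a null set does not affect its role as an $L^2$-eigenfunction, so replacing the original representative by this continuous-a.e. one is harmless, exactly as the paper already did when passing to \eqref{eq.decomp}.
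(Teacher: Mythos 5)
Your proposal is correct and follows essentially the same route as the paper's proof: use the eigenvalue identity $r_i=\theta_i^{-1}I_f r_i$ to get boundedness, apply Fubini to the null discontinuity set of $f$ to find a full-measure set of $x$ whose sections are continuous a.e., invoke dominated convergence to get continuity of $r_i$ at such $x$, and conclude by the Lebesgue criterion. The only cosmetic difference is that you make explicit the choice of the everywhere-defined representative, which the paper leaves implicit.
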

\begin{proof}
Let $D_f\subset [0,1]\times [0,1]$ be the set of discontinuity points $f$. Since $f$ is Riemann integrable,  the  Lebesgue measure of $D_f$ is $0$. Let
$$D_f^x=\{y\in [0,\,1]:\, (x,y)\in D_f\}, \, x\in [0,\,1]\,.$$
If $\lambda$ is the one dimensional Lebesgue measure, then Fubini's theorem implies that
$$E=\{ x\in [0,1]: \lambda(D_f^x)=0\}$$
has full measure. Fix an $x\in E$ and consider $x_n\to x$ and observe that $$f(x_n,y)\to f(x,y) \text{ for all $y\notin D_f^x$}.$$ 
Fix $1\le i\le k$ and let $\theta_i$ be the eigenvalue with corresponding eigenfunction $r_i$, that is,
\begin{equation}\label{eq:eigeneq}
r_i(x)= \frac{1}{\theta_i}\int_0^1 f(x,y) r_i(y) \, dy.
\end{equation}
Using $f$ is bounded and $r\in L^2[0,1]$, dominated convergence theorem implies
$$r_i(x_n)= \frac{1}{\theta_i}\int_{(D_f^x)^c} f(x_n,y) r_i(y)\,  dy\to \frac{1}{\theta_i}\int_0^1 f(x,y) r_i(y)\, dy=r_i(x)$$
and hence $r$ is continuous at $x$. So the discontinuity points of $r_i$ form a subset of $E^c$ which has Lebesgue measure $0$. Further, \eqref{eq:eigeneq} shows that $r_i$ is bounded and hence Riemann integrability follows. 
\end{proof}

The following result is a version of the Perron-Frobenius theorem in the infinite dimensional setting (also known as the Krein-Rutman theorem). Since our integral operator is positive, self-adjoint and finite dimensional so the proof in this setting is much simpler and can be derived following the work of \cite{ninio:1976}. In what follows, we use for $f, g\in L^2[0,1]$, the inner product 
$$\langle f, \, g\rangle =\int_0^1 f(x)g(x)dx.$$

\begin{lemma}\label{lemma:KRT}
Suppose $f>0$ a.e.\ on $[0,1]\times[0,1]$. Then largest eigenvalue $\theta_1$ of $T_f$ is positive and the corresponding eigenfunction $r_1$ can be chosen such that $r_1(x)>0$ for almost every $x\in [0,1]$. Further, $\theta_1>\theta_2$.
\end{lemma}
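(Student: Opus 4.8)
The plan is to use that $T_f=I_f$ is a compact, self-adjoint, non-negative operator of finite rank $k\ge 1$, so that its largest eigenvalue admits the variational description $\theta_1=\|I_f\|=\sup\{\langle I_f g,g\rangle:\|g\|=1\}$, and to push the strict positivity of the kernel through this description. First I would note that $\theta_1>0$: testing against the constant function $g\equiv 1$ gives $\langle I_f g,g\rangle=\int_0^1\int_0^1 f(x,y)\,dx\,dy>0$ since $f>0$ a.e., so $I_f$ is not the zero operator and hence its top eigenvalue is strictly positive.

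The core is a ``modulus stays an eigenfunction'' step. Let $g$ be a real eigenfunction with $\|g\|=1$ and $I_f g=\theta_1 g$, and set $h=|g|$. Because $f\ge 0$ pointwise, $(I_f h)(x)=\int_0^1 f(x,y)|g(y)|\,dy\ge\bigl|\int_0^1 f(x,y)g(y)\,dy\bigr|=\theta_1 h(x)$ for a.e.\ $x$, whence $\langle I_f h,h\rangle\ge\theta_1\|h\|^2$; on the other hand $\langle I_f h,h\rangle\le\|I_f h\|\,\|h\|\le\|I_f\|\,\|h\|^2=\theta_1\|h\|^2$, so every inequality is an equality, and equality in Cauchy--Schwarz forces $I_f h=\theta_1 h$. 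Next, any nonnegative $h\not\equiv 0$ with $I_f h=\theta_1 h$ is strictly positive a.e.: by Fubini, $f(x,\cdot)>0$ a.e.\ for a.e.\ $x$, and $h\ge 0$ is not a.e.\ zero, so $\theta_1 h(x)=\int_0^1 f(x,y)h(y)\,dy>0$ for a.e.\ $x$. Applying this with $g=r_1$ yields $|r_1|>0$ a.e.\ and $I_f|r_1|=\theta_1|r_1|$, so replacing $r_1$ by $|r_1|$ furnishes the claimed a.e.-positive Perron eigenfunction.

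For the simplicity statement $\theta_1>\theta_2$ I would show that \emph{every} eigenfunction $g$ of $\theta_1$ has a constant sign a.e. Write $g=g^+-g^-$; combining $I_f g=\theta_1 g$ with $I_f|g|=\theta_1|g|$ from the previous step and adding resp.\ subtracting gives $I_f g^+=\theta_1 g^+$ and $I_f g^-=\theta_1 g^-$. If both $g^+$ and $g^-$ were nonzero, the strict-positivity step would force $g^+>0$ and $g^->0$ a.e., contradicting $g^+g^-\equiv 0$; hence one of them vanishes, i.e.\ $g>0$ a.e.\ or $g<0$ a.e. Consequently two orthogonal eigenfunctions of $\theta_1$ cannot coexist, since the inner product of two functions each of a.e.-constant strict sign is nonzero; thus the $\theta_1$-eigenspace is one-dimensional, i.e.\ $\theta_1>\theta_2$.

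I expect the sign rigidity in the last step to be the delicate point: the decomposition $g=g^+-g^-$ together with the fact that each summand is \emph{separately} an eigenfunction---which hinges on $I_f$ being a positive operator with an a.e.-positive kernel, through the two preceding observations---is exactly what excludes a nontrivial sign change, and hence a two-dimensional top eigenspace. Everything before that is soft once the Cauchy--Schwarz bound $\langle I_f h,h\rangle\le\theta_1\|h\|^2$ and the Fubini positivity argument are in place.
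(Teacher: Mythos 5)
Your proof is correct and follows essentially the same route as the paper: both hinge on the Rayleigh--Ritz characterization of $\theta_1$ to compare $\langle I_f g,g\rangle$ with $\langle I_f|g|,|g|\rangle$, use the a.e.\ positivity of the kernel to upgrade a nonnegative eigenfunction to an a.e.\ strictly positive one, and rule out $\theta_1=\theta_2$ by orthogonality. The only cosmetic difference is that you extract sign rigidity by first showing $|g|$ (hence $g^{\pm}$) is an eigenfunction via equality in Cauchy--Schwarz, whereas the paper reads it off directly from the equality $\langle r_1,I_f r_1\rangle=\langle|r_1|,I_f|r_1|\rangle$; your version actually fills in that terse step in more detail.
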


\begin{proof}
First observe that 
\begin{align*}
 0<\theta_1= \langle r_1,\, \theta_1 r_1\rangle & = \langle r_1, \, I_f(r_1)\rangle= | \langle r_1, \, I_f(r_1)\rangle|\\
 &\le \langle u_1, I_f(u_1)\rangle \le \theta_1
\end{align*}
where $u_1(x)=|r_1|(x)$ and the last inequality follows from the Rayleigh-Ritz formulation of the largest eigenvalue. Hence note that the string of inequalities is actually an equality, that is,
$$ \langle r_1, \, I_f(r_1) \rangle= \langle u_1, I_f(u_1)\rangle.$$
Breaking $r_1= r_1^{+}-r_1^{-}$ implies either $r_1^+=0$ or $r_1^{-}=0$ almost everywhere. 
Without loss of generality assume that $r_1\ge 0$ almost everywhere. Using 
$$\theta_1 r_1(x)= \int_0^1 f(x,y) r_1(y)\, dy$$
Note that if $r_1(x)$ is zero for some $x$ then due to the positivity assumption on $f$, $r_1(y)=0$ for almost every $y\in [0,1]$ which is a contradiction. Hence we have that $r_1(x)>0$ almost every $x\in [0, 1]$.

For the final claim, without loss of generality assume that $\int_0^1 r_1(x)\, dx\ge 0$. If $\theta_1=\theta_2$, then the previous argument would give us $r_2(x)>0$ and this will contradict the orthogonality of $r_1$ and $r_2$. 
\end{proof}

Lemmas \ref{est.l1} -- \ref{est.l4} are proved in the rest of this section. Therefore, the notations used here should refer to those in Section \ref{sec:est} and should not be confused with those in Section \ref{sec:proof}. For example, $e_1$ and $e_2$ are as in Lemma \ref{est.l2}.

\begin{proof}[Proof of Lemma~\ref{est.l1}]
Note that for any even integer $k$ 
\begin{equation}\label{eq:spn:trace}
\E(\|W\|^k) \le \E(\Tr (W^k)).
\end{equation}
Using $\E(W(i,j)^2)\le \vep M$ and condition~\eqref{eq.vep} it is immediate that conditions of Theorem 1.4 of \cite{vu2007spec} are satisfied. We shall use the following estimate from the proof of that result. It follows from \cite[Section 4]{vu2007spec}
\begin{equation} \label{est.l1p.1}
\E(\Tr (W^k)) \le K_1N (2\sqrt{\vep MN})^k
\end{equation}
where $K_1$ is some positive constant and there exists a constant $a>0$ such that $k$ can be chosen as
$$k = \sqrt{2} a (\vep M)^{1/4} N^{1/4}.$$ 

Using~\eqref{eq:spn:trace}, \eqref{est.l1p.1} and $(1-x)^k \le e^{-kx}$ for $k$, $x > 0$, 
\begin{align} 
&P \left( \| W \| \ge 2 \sqrt{MN\vep} + C_1 (N \vep )^{1/4} ( \log N)^{\xi/4}\right)\nonumber \\
&= K_1 N \left( 1 - \frac{C_1 (N \vep )^{1/4} ( \log N)^{\xi/4}}{ 2 \sqrt{MN\vep} + C_1 (N \vep )^{1/4} ( \log N)^{\xi/4}} \right)^k\nonumber \\
&\le K_1 N \exp\left(-\frac{k C_1 (N \vep )^{1/4} ( \log N)^{\xi/4}}{ 2 \sqrt{MN\vep} + C_1 (N \vep )^{1/4} ( \log N)^{\xi/4}} \right). \label{est.l1p.2}
\end{align}
Now plugging in the value of $k$ in the bound \eqref{est.l1p.2} and using
\[
2 \sqrt{M} + C_1 (N \vep )^{-1/4} ( \log N)^{\xi/4} \le 2 \sqrt{M} +C_1
\]
we have
\[
\eqref{est.l1p.2} \le K_1 N \exp\left(-\frac{C_1 a M^{1/4}\sqrt{2} ( \log N)^{\xi/4}}{ 2 \sqrt{M} + C_1} \right) \le e^{-C_2 (\log N )^{\xi/4}}
\]
for some constant $C_2>0$ and $N$ large enough. This proves~\eqref{eq:est.l1.1} and hence the lemma. 

\end{proof}
\begin{proof}[Proof of Lemma \ref{est.l2}]
Let $A$ be the event where Lemma \ref{est.l1} holds, that is, $\|W\| \le C \sqrt{N\vep}$ for some constant $C$. Since the entries of $e_1$ and $e_2$ are in $[-1/\sqrt{N}, 1/\sqrt{N}]$ so $\|e_i\|\le 1$ for $i=1,2$. Hence on the high probability event it holds that
\[
\left|\E\left(e_1^\prime W^ne_2 \textbf{1}_{A}\right)\right|\le (CN\vep)^{n/2}.
\]
We show that the above expectation on the low probability event $A^c$ is negligible. For that first observe
$$ |\E[(e_1^\prime W^n e_2)^2]|\le N^{nC^\prime}$$
for some constant $0<C^\prime<\infty$. Thus using Lemma~\ref{est.l1} one has
\begin{align*}
 \left|\E\left(e_1^\prime W^ne_2 \textbf{1}_{A^c}\right)\right|&\le \left|\E\left[(e_1^\prime W^ne_2)^2\right]^{1/2}\right|P(A_N^c)^{1/2} \\
 &\le \exp\left(nC^{\prime}\log N- 2^{-1}C_2 (\log N)^{\xi/4}\right)
\end{align*}
Since $n\le\log N$ and $\xi>8$ the result follows.
\end{proof}

\begin{proof}[Proof of Lemma~\ref{est.l3}]
The proof is similar to the proof of Lemma~6.5 of \cite{Erdos1}. The exponent in the exponential decay is crucial, so the proof is briefly sketched. 
Observe that
\begin{align}
&e_1^\prime W^ne_2-\E\left(e_1^\prime W^ne_2\right)\nonumber\\
=&\sum_{i\in \{1,\ldots, N\}^{n+1}}e_1(i_1)e_2(i_{n+1}) \left(\prod_{l=1}^{n}W(i_l, i_{l+1})-\E\left[\prod_{l=1}^{n}W(i_l, i_{l+1})\right]\right)\label{eq:centered}
\end{align}

To use the independence, one can  split the matrix $W$ as $W^{\prime} + W^{\pp}$ where the upper triangular matrix $W^{\prime}$ has entries $W^{\prime}(i,j) = W(i,j) \textbf{1}(i\leqslant j)$ and the lower triangular matrix $W^{\pp}$ with entries $W^{\pp}(i,j) = W(i,j) \textbf{1}(i > j)$. Therefore the above quantity under the sum breaks into $2^n$ terms each having similar properties. Denote one such term as
\[
L_n= \sum_{i\in \{1,\ldots, N\}^{n+1}}e_1(i_1)e_2(i_{n+1}) \left(\prod_{l=1}^{n}W^\prime(i_l, i_{l+1})-\E\left[\prod_{l=1}^{n}W^\prime(i_l, i_{l+1})\right]\right). 
\]
Using the fact that each entry of $e_1$ and $e_2$ are bounded by $1/\sqrt{N}$, it follows by imitating the proof of Lemma 6.5 of \cite{Erdos1} that
$$\E[|L_n|^p] \le \frac{\left(Cnp\right)^{np}(N\vep)^{np/2}}{N^{p/2}},$$
where $p$ is an even integer and $C$ is a positive constant, independent of $n$ and $p$. Rest of the $2^n-1$ terms arising in~\eqref{eq:centered} have the same bound and hence
\begin{align*}
    &P\left(\left|e_1^\prime W^ne_2-\E\left(e_1^\prime W^ne_2\right)\right|>N^{(n-1)/2}\vep^{n/2}(\log N)^{n\xi/4}\right)\\
    &\le \frac{\left(2Cnp\right)^{np}(N\vep)^{np/2}}{N^{p/2}N^{p(n-1)/2}\vep^{pn/2}(\log N)^{pn\xi/4}}= \frac{\left(2Cnp\right)^{np}}{(\log N)^{pn\xi/4}}.
\end{align*}
Choose $\eta\in (1,\, \xi/4)$ and consider 
\[
p=\frac{(\log N)^{\eta}}{2Cn}\,,
\]
(with $N$ large enough to make $p$ an even integer) to get
\begin{align*}
&P\left(\left|e_1^\prime W^ne_2-\E\left(e_1^\prime W^ne_2\right)\right|>N^{(n-1)/2}\vep^{n/2}(\log N)^{n\xi/4}\right)\\
&\le \exp\left(-\frac{1}{2C}(\log N)^{\eta}(\frac{\xi}{4}-\eta)\log \log N\right).
\end{align*}
 Note that $n\le L$, ensures that $p>1$. Since the bound is uniform over all $2\le n\le L$, the first bound~\eqref{est.l3.1} follows. 
 
 For \eqref{est.l3.2} one can use Hoeffding's inequality \cite[Theorem 2]{hoeffding:1963} as follows.
 
 Define
\[
\widetilde A(k,l)= A(k,l) e_1(k)e_2(l), \,\,\, 1\le k\le l\le N.
\]
Since $A(k,l)$ are Bernoulli random variables, so one has $\{\widetilde A(k,l): 1\le k\le l\le N\}$ are independent random variables taking values in  $[-1/N, 1/N]$ and hence by Hoeffding's inequality we have, for any $\delta>0$,
\begin{align*}
&P\left( \left|\sum_{1\le k\le l\le N} \widetilde A(k,l)- E\left(\sum_{1\le k\le l\le N} \widetilde A(k,l)\right)\right|>\delta N\vep\right)\\
&\le 2\exp\left(-\delta^2(N\vep)^2\right)\le 2 \exp\left(-\delta^2 (\log N)^{2\xi}\right).
\end{align*}
Dealing with the case $k>l$ similarly, the desired bound on $e_1^\prime We_2$ follows. 
\end{proof}

\begin{proof}[Proof of Lemma \ref{est.l4}]
Follows by a simple moment calculation.
\end{proof}

\section*{Acknowledgment}
The authors thank an anonymous referee for insightful comments that helped improve the paper significantly. RSH thanks Kavita Ramanan for a fruitful discussion. The research of AC and RSH was supported by the MATRICS grant of SERB. SC thanks Matteo Sfragara for helpful discussions.
The authors are grateful to Anirvan Chakraborty for pointing out that \eqref{t.eigenvec.align} is needed in the assumption, which they had missed in the previous versions.


\begin{thebibliography}{36}
\providecommand{\natexlab}[1]{#1}
\providecommand{\url}[1]{\texttt{#1}}
\expandafter\ifx\csname urlstyle\endcsname\relax
  \providecommand{\doi}[1]{doi: #1}\else
  \providecommand{\doi}{doi: \begingroup \urlstyle{rm}\Url}\fi

\bibitem[Alt et~al.(2020)Alt, Ducatez, and Knowles]{alt:ducatez:knowles:2020}
J.~Alt, R.~Ducatez, and A.~Knowles.
\newblock Delocalization transition for critical {E}rd{\H{o}}s-{R}\'enyi graphs.
\newblock ArXiv:2005.14180, 2020.

\bibitem[Baik and Silverstein(2006)]{baik:silverstein}
J.~Baik and J.~W. Silverstein.
\newblock Eigenvalues of large sample covariance matrices of spiked population
  models.
\newblock \emph{J. Multivariate Anal.}, 97\penalty0 (6):\penalty0 1382--1408,
  2006.
\newblock ISSN 0047-259X.
\newblock \doi{10.1016/j.jmva.2005.08.003}.
\newblock URL \url{https://doi.org/10.1016/j.jmva.2005.08.003}.

\bibitem[Baik et~al.(2005)Baik, Ben~Arous, and P\'{e}ch\'{e}]{BBP:2005}
J.~Baik, G.~Ben~Arous, and S.~P\'{e}ch\'{e}.
\newblock Phase transition of the largest eigenvalue for nonnull complex sample
  covariance matrices.
\newblock \emph{Ann. Probab.}, 33\penalty0 (5):\penalty0 1643--1697, 2005.
\newblock ISSN 0091-1798.
\newblock \doi{10.1214/009117905000000233}.
\newblock URL \url{https://doi.org/10.1214/009117905000000233}.

\bibitem[Benaych-Georges and Nadakuditi(2011)]{FBG:Nadakuditi}
F.~Benaych-Georges and R.~R. Nadakuditi.
\newblock The eigenvalues and eigenvectors of finite, low rank perturbations of
  large random matrices.
\newblock \emph{Adv. Math.}, 227\penalty0 (1):\penalty0 494--521, 2011.
\newblock ISSN 0001-8708.
\newblock \doi{10.1016/j.aim.2011.02.007}.
\newblock URL \url{https://doi.org/10.1016/j.aim.2011.02.007}.

\bibitem[Benaych-Georges et~al.(2011)Benaych-Georges, Guionnet, and
  Maida]{benaych:2011}
F.~Benaych-Georges, A.~Guionnet, and M.~Maida.
\newblock Fluctuations of the extreme eigenvalues of finite rank deformations
  of random matrices.
\newblock \emph{Electronic Journal of Probability}, 16:\penalty0 1621--1662,
  2011.

\bibitem[Benaych-Georges et~al.(2019)Benaych-Georges, Bordenave, and
  Knowles]{benaych2019largest}
F.~Benaych-Georges, C.~Bordenave, and A.~Knowles.
\newblock Largest eigenvalues of sparse inhomogeneous
  {E}rd{\H{o}}s--{R}{\'e}nyi graphs.
\newblock \emph{The Annals of Probability}, 47\penalty0 (3):\penalty0
  1653--1676, 2019.

\bibitem[Benaych-Georges et~al.(2020)Benaych-Georges, Bordenave, and
  Knowles]{Benaych2017spectral}
F.~Benaych-Georges, C.~Bordenave, and A.~Knowles.
\newblock Spectral radii of sparse random matrices.
\newblock \emph{Ann. Inst. H. Poincaré Probab. Statist.}, 56\penalty0
  (3):\penalty0 2141--2161, 08 2020.
\newblock \doi{10.1214/19-AIHP1033}.
\newblock URL \url{https://doi.org/10.1214/19-AIHP1033}.

\bibitem[Bhamidi et~al.(2010)Bhamidi, Van Der~Hofstad, van Leeuwaarden,
  et~al.]{bhamidi2010}
S.~Bhamidi, R.~Van Der~Hofstad, J.~van Leeuwaarden, et~al.
\newblock Scaling limits for critical inhomogeneous random graphs with finite
  third moments.
\newblock \emph{Electronic Journal of Probability}, 15:\penalty0 1682--1702,
  2010.

\bibitem[Bollob{\'a}s et~al.(2007)Bollob{\'a}s, Janson, and
  Riordan]{bollobas2007phase}
B.~Bollob{\'a}s, S.~Janson, and O.~Riordan.
\newblock The phase transition in inhomogeneous random graphs.
\newblock \emph{Random Structures \& Algorithms}, 31\penalty0 (1):\penalty0
  3--122, 2007.

\bibitem[Capitaine and P\'{e}ch\'{e}(2016)]{capitaine:peche}
M.~Capitaine and S.~P\'{e}ch\'{e}.
\newblock Fluctuations at the edges of the spectrum of the full rank deformed
  {GUE}.
\newblock \emph{Probab. Theory Related Fields}, 165\penalty0 (1-2):\penalty0
  117--161, 2016.
\newblock ISSN 0178-8051.
\newblock \doi{10.1007/s00440-015-0628-6}.
\newblock URL \url{https://doi.org/10.1007/s00440-015-0628-6}.

\bibitem[Capitaine et~al.(2009)Capitaine, Donati-Martin, F{\'e}ral,
  et~al.]{capitaine2009largest}
M.~Capitaine, C.~Donati-Martin, D.~F{\'e}ral, et~al.
\newblock The largest eigenvalues of finite rank deformation of large wigner
  matrices: convergence and nonuniversality of the fluctuations.
\newblock \emph{The Annals of Probability}, 37\penalty0 (1):\penalty0 1--47,
  2009.

\bibitem[Capitaine et~al.(2012)Capitaine, Donati-Martin, and
  F{\'e}ral]{capitaine2012central}
M.~Capitaine, C.~Donati-Martin, and D.~F{\'e}ral.
\newblock Central limit theorems for eigenvalues of deformations of wigner
  matrices.
\newblock \emph{Annales de l'IHP Probabilit{\'e}s et statistiques}, 48\penalty0
  (1):\penalty0 107--133, 2012.

\bibitem[Chakrabarty et~al.(2019)Chakrabarty, Hazra, den Hollander, and
  Sfragara]{chakrabarty2018spectra}
A.~Chakrabarty, R.~S. Hazra, F.~den Hollander, and M.~Sfragara.
\newblock Spectra of adjacency and {L}aplacian matrices of inhomogeneous
  {E}rd{\H{o}}s--{R}{\'e}nyi random graphs.
\newblock To appear in Random Matrices: Theory and Applications, DOI:
  10.1142/S201032632150009X, 2019.

\bibitem[Chapon et~al.(2014)Chapon, Couillet, Hachem, and
  Mestre]{chapon:hachem}
F.~Chapon, R.~Couillet, W.~Hachem, and X.~Mestre.
\newblock The outliers among the singular values of large rectangular random
  matrices with additive fixed rank deformation.
\newblock \emph{Markov Process. Related Fields}, 20\penalty0 (2):\penalty0
  183--228, 2014.
\newblock ISSN 1024-2953.

\bibitem[Couillet and Hachem(2013)]{coulliet:hachem}
R.~Couillet and W.~Hachem.
\newblock Fluctuations of spiked random matrix models and failure diagnosis in
  sensor networks.
\newblock \emph{IEEE Trans. Inform. Theory}, 59\penalty0 (1):\penalty0
  509--525, 2013.
\newblock ISSN 0018-9448.
\newblock \doi{10.1109/TIT.2012.2218572}.
\newblock URL \url{https://doi.org/10.1109/TIT.2012.2218572}.

\bibitem[Ding et~al.(2010)Ding, Jiang, et~al.]{ding2010spectral}
X.~Ding, T.~Jiang, et~al.
\newblock Spectral distributions of adjacency and laplacian matrices of random
  graphs.
\newblock \emph{Annals of Applied Probability}, 20\penalty0 (6):\penalty0
  2086--2117, 2010.

\bibitem[Erd{\H{o}}s et~al.(2012)Erd{\H{o}}s, Knowles, Yau, and Yin]{Erdos2}
L.~Erd{\H{o}}s, A.~Knowles, H.-T. Yau, and J.~Yin.
\newblock Spectral statistics of {E}rd{\H{o}}s-{R}{\'e}nyi graphs ii:
  Eigenvalue spacing and the extreme eigenvalues.
\newblock \emph{Communications in Mathematical Physics}, 314\penalty0
  (3):\penalty0 587--640, Sep 2012.
\newblock ISSN 1432-0916.
\newblock \doi{10.1007/s00220-012-1527-7}.
\newblock URL \url{https://doi.org/10.1007/s00220-012-1527-7}.

\bibitem[Erd{\H{o}}s et~al.(2013)Erd{\H{o}}s, Knowles, Yau, and Yin]{Erdos1}
L.~Erd{\H{o}}s, A.~Knowles, H.-T. Yau, and J.~Yin.
\newblock Spectral statistics of {E}rd{\H{o}}s--{R}{\'e}nyi graphs {I}: {L}ocal
  semicircle law.
\newblock \emph{Ann. Probab.}, 41\penalty0 (3B):\penalty0 2279--2375, 2013.
\newblock ISSN 0091-1798.
\newblock \doi{10.1214/11-AOP734}.
\newblock URL \url{https://doi.org/10.1214/11-AOP734}.

\bibitem[F\'{e}ral and P\'{e}ch\'{e}(2007)]{feral:peche}
D.~F\'{e}ral and S.~P\'{e}ch\'{e}.
\newblock The largest eigenvalue of rank one deformation of large {W}igner
  matrices.
\newblock \emph{Comm. Math. Phys.}, 272\penalty0 (1):\penalty0 185--228, 2007.
\newblock ISSN 0010-3616.
\newblock \doi{10.1007/s00220-007-0209-3}.
\newblock URL \url{https://doi.org/10.1007/s00220-007-0209-3}.

\bibitem[F\"{u}redi and Koml\'{o}s(1981{\natexlab{a}})]{furedi:kolmos:1981}
Z.~F\"{u}redi and J.~Koml\'{o}s.
\newblock The eigenvalues of random symmetric matrices.
\newblock \emph{Combinatorica}, 1\penalty0 (3):\penalty0 233--241,
  1981{\natexlab{a}}.
\newblock ISSN 0209-9683.
\newblock \doi{10.1007/BF02579329}.
\newblock URL \url{https://doi.org/10.1007/BF02579329}.

\bibitem[F\"{u}redi and Koml\'{o}s(1981{\natexlab{b}})]{komlos1981eigen}
Z.~F\"{u}redi and J.~Koml\'{o}s.
\newblock The eigenvalues of random symmetric matrices.
\newblock \emph{Combinatorica}, 1\penalty0 (3):\penalty0 233--241,
  1981{\natexlab{b}}.
\newblock ISSN 0209-9683.
\newblock \doi{10.1007/BF02579329}.
\newblock URL \url{https://doi.org/10.1007/BF02579329}.

\bibitem[Hoeffding(1963)]{hoeffding:1963}
W.~Hoeffding.
\newblock Probability inequalities for sums of bounded random variables.
\newblock \emph{Journal of the American Statistical Association}, 58\penalty0
  (301):\penalty0 13--30, 1963.

\bibitem[Huang et~al.(2020)Huang, Landon, and Yau]{MR4089498}
J.~Huang, B.~Landon, and H.-T. Yau.
\newblock Transition from {T}racy-{W}idom to {G}aussian fluctuations of
  extremal eigenvalues of sparse {E}rd{\H{o}}s-{R}\'enyi graphs.
\newblock \emph{Ann. Probab.}, 48\penalty0 (2):\penalty0 916--962, 2020.
\newblock ISSN 0091-1798.
\newblock \doi{10.1214/19-AOP1378}.
\newblock URL \url{https://doi.org/10.1214/19-AOP1378}.

\bibitem[Johansson(2007)]{johansson}
K.~Johansson.
\newblock From {G}umbel to {T}racy-{W}idom.
\newblock \emph{Probab. Theory Related Fields}, 138\penalty0 (1-2):\penalty0
  75--112, 2007.
\newblock ISSN 0178-8051.
\newblock \doi{10.1007/s00440-006-0012-7}.
\newblock URL \url{https://doi.org/10.1007/s00440-006-0012-7}.

\bibitem[Knowles and Yin(2013)]{knowles:yin:2013}
A.~Knowles and J.~Yin.
\newblock The isotropic semicircle law and deformation of {W}igner matrices.
\newblock \emph{Comm. Pure Appl. Math.}, 66\penalty0 (11):\penalty0 1663--1750,
  2013.
\newblock ISSN 0010-3640.
\newblock \doi{10.1002/cpa.21450}.
\newblock URL \url{https://doi.org/10.1002/cpa.21450}.

\bibitem[Knowles and Yin(2014)]{knowles:yin:2014}
A.~Knowles and J.~Yin.
\newblock The outliers of a deformed {W}igner matrix.
\newblock \emph{Ann. Probab.}, 42\penalty0 (5):\penalty0 1980--2031, 2014.
\newblock ISSN 0091-1798.
\newblock \doi{10.1214/13-AOP855}.
\newblock URL \url{https://doi.org/10.1214/13-AOP855}.

\bibitem[Lee and Schnelli(2016)]{lee:schnelli}
J.~O. Lee and K.~Schnelli.
\newblock Extremal eigenvalues and eigenvectors of deformed {W}igner matrices.
\newblock \emph{Probab. Theory Related Fields}, 164\penalty0 (1-2):\penalty0
  165--241, 2016.
\newblock ISSN 0178-8051.
\newblock \doi{10.1007/s00440-014-0610-8}.
\newblock URL \url{https://doi.org/10.1007/s00440-014-0610-8}.

\bibitem[Lee and Schnelli(2018)]{lee2018}
J.~O. Lee and K.~Schnelli.
\newblock Local law and tracy--widom limit for sparse random matrices.
\newblock \emph{Probability Theory and Related Fields}, 171\penalty0
  (1-2):\penalty0 543--616, 2018.

\bibitem[Ninio(1976)]{ninio:1976}
F.~Ninio.
\newblock A simple proof of the perron-frobenius theorem for positive symmetric
  matrices.
\newblock \emph{Journal of Physics A: Mathematical and General}, 9\penalty0
  (8):\penalty0 1281, 1976.

\bibitem[P\'{e}ch\'{e}(2014)]{peche:review}
S.~P\'{e}ch\'{e}.
\newblock Deformed ensembles of random matrices.
\newblock In \emph{Proceedings of the {I}nternational {C}ongress of
  {M}athematicians---{S}eoul 2014. {V}ol. {III}}, pages 1159--1174. Kyung Moon
  Sa, Seoul, 2014.

\bibitem[Tiomoko~Ali and Couillet(2017)]{ali:couillet}
H.~Tiomoko~Ali and R.~Couillet.
\newblock Improved spectral community detection in large heterogeneous
  networks.
\newblock \emph{J. Mach. Learn. Res.}, 18:\penalty0 Paper No. 225, 49, 2017.
\newblock ISSN 1532-4435.

\bibitem[Tran et~al.(2013)Tran, Vu, and Wang]{TranVuWang1}
L.~V. Tran, V.~H. Vu, and K.~Wang.
\newblock Sparse random graphs: eigenvalues and eigenvectors.
\newblock \emph{Random Structures Algorithms}, 42\penalty0 (1):\penalty0
  110--134, 2013.
\newblock ISSN 1042-9832.
\newblock \doi{10.1002/rsa.20406}.
\newblock URL \url{https://doi.org/10.1002/rsa.20406}.

\bibitem[van~der Hofstad(2013)]{van2013}
R.~van~der Hofstad.
\newblock Critical behavior in inhomogeneous random graphs.
\newblock \emph{Random Structures \& Algorithms}, 42\penalty0 (4):\penalty0
  480--508, 2013.

\bibitem[Varga(2004)]{Varga:book}
R.~S. Varga.
\newblock \emph{Ger\v{s}gorin and his circles}, volume~36 of \emph{Springer
  Series in Computational Mathematics}.
\newblock Springer-Verlag, Berlin, 2004.
\newblock ISBN 3-540-21100-4.
\newblock \doi{10.1007/978-3-642-17798-9}.
\newblock URL \url{https://doi.org/10.1007/978-3-642-17798-9}.

\bibitem[Vu(2007)]{vu2007spec}
V.~H. Vu.
\newblock Spectral norm of random matrices.
\newblock \emph{Combinatorica}, 27\penalty0 (6):\penalty0 721--736, 2007.
\newblock ISSN 0209-9683.
\newblock \doi{10.1007/s00493-007-2190-z}.
\newblock URL \url{https://doi.org/10.1007/s00493-007-2190-z}.

\bibitem[Zhu(2018)]{zhu:2018}
Y.~Zhu.
\newblock Graphon approach to limiting spectral distributions of {W}igner-type
  matrices.
\newblock \emph{arXiv.1806.11246}, 2018.

\end{thebibliography}

\end{document}